%% file: main_preprint.tex
\documentclass[12pt]{article}

\usepackage{arxiv}

\title{High-Probability Minimax Adaptive Estimation in Besov Spaces via Online-to-Batch}
\usepackage{times}
\input{mypreambule}
\usepackage{natbib}
\usepackage[algoruled,linesnumbered, vlined]{algorithm2e}
\usepackage{amsthm}
\newtheorem{theorem}{Theorem}
\newtheorem{corollary}{Corollary}
\newtheorem{assumption}{Assumption}
\newtheorem{lemma}{Lemma}
\newtheorem{definition}{Definition}

\DeclareMathSizes{10}{9}{6}{5}
\author{%
  Paul Liautaud \\
  Sorbonne Université, CNRS, LPSM \\
  F-75005 Paris, France \\
  \texttt{paul.liautaud@sorbonne-universite.fr} \\
  \And
  Pierre Gaillard \\
  Université Grenoble Alpes, Inria \\
  CNRS, Grenoble INP, LJK \\
  38000 Grenoble, France \\
  \texttt{pierre.gaillard@inria.fr} \\
  \And
  Olivier Wintenberger \\
  Sorbonne Université, CNRS, LPSM \\
  F-75005 Paris, France \\
  \texttt{olivier.wintenberger@sorbonne-universite.fr}
}

\begin{document}

\maketitle

\begin{abstract}%An attempt

We study nonparametric regression over Besov spaces from noisy observations under sub-exponential noise, aiming to achieve minimax-optimal guarantees on the integrated squared error that hold with high probability and adapt to the unknown noise level. To this end, we propose a wavelet-based online learning algorithm that dynamically adjusts to the observed gradient noise by adaptively clipping it at an appropriate level, eliminating the need to tune parameters such as the noise variance or gradient bounds. As a by-product of our analysis, we derive high-probability adaptive regret bounds that scale with the $\ell_1$-norm of the competitor.
Finally, in the batch statistical setting, we obtain adaptive and minimax-optimal estimation rates for Besov spaces via a refined online-to-batch conversion. This approach carefully exploits the structure of the squared loss in combination with self-normalized concentration inequalities.
\end{abstract}

\textbf{Keywords:} Minimax Adaptive Estimation, Noise-Level-Aware, Besov spaces, Online-to-Batch, High probability

\section{Introduction}

A fundamental challenge in statistics and machine learning is the estimation of an unknown function $f$ from noisy observations. Given a sample of $T \ge 1$ i.i.d.\ data points $(X_t, Y_t)_{t=1}^T$ following the regression model
\[
Y_t = f(X_t) + \varepsilon_t, \quad t = 1, \dots, T,
\]
where $\varepsilon_t$ denotes a sub-exponential zero-mean noise, the goal is to construct an estimator $\bar f_T$ of the true function $f : \mathcal X \to \mathbb R$, supported on a compact domain $\mathcal X \subset \mathbb R^d$, $d \ge 1$. The quality of an estimator $\bar f_T$ is measured by the integrated squared error $\|\bar f_T - f\|_{L^2(\mathbb P_X)}$. When $f$ is assumed to belong to a Besov space $\mathcal B^s_{pq}$ (see Section~\ref{section:risk}), the landmark work of \cite{donoho1998minimax} established the minimax-optimal rates for the mean integrated squared error, which are attained by wavelet shrinkage methods.

While this classical theory focuses on optimality in expectation, many modern learning applications—ranging from safety-critical systems to precision medicine—require guarantees that hold with \emph{high probability}. This motivates the study of estimators that achieve minimax-optimal performance beyond the average-case regime. Our objective is therefore to design an estimator $\bar f_T$ that satisfies, with high probability, a minimax-adaptive risk bound of the form
\begin{equation}
\label{eq:bound_intro}
\|\bar f_T - f\|_2^2 = O\bigg( \|f\|_{\mathcal B}^{\frac{2d}{2s+d}} (\sigma^2)^{\frac{2s}{2s+d}} T^{-\frac{2s}{2s + d}} \bigg)\, ,
\end{equation}
for any continuous function $f \in \mathcal B^s_{pq}$, without prior knowledge of the Besov parameters $(s,p,q)$, the Besov norm $\|f\|_{\mathcal B}$, or the noise variance $\sigma^2$. This bound is said to be \emph{noise-level–aware} since it exhibits minimax-optimal dependence on  the noise level $\sigma$. Importantly, unlike classical minimax analyses \citep[e.g.,][]{delyon1996minimax, donoho1998minimax}, which characterize rates asymptotically as $T \to \infty$ under known or fixed noise level, our result adapts to the variance $\sigma^2$ and matches the noise-level-aware minimax rates recently derived in \cite{devore2025optimal}. Achieving such guarantees simultaneously with high probability and full adaptivity remains challenging for standard wavelet shrinkage and aggregation techniques.

To address this challenge, we turn to the framework of \emph{online learning}. Online algorithms are natural candidates for adaptive estimation, as they are designed to control regret without prior knowledge of the data-generating process. By defining the estimator $\bar{f}_T$ as the average of the online iterates, $\bar{f}_T = \frac{1}{T} \sum_{t=1}^T \hat{f}_t$, one can relate the \emph{excess risk} to the \emph{stochastic regret} via Jensen's inequality:
\begin{align}
    \|\bar{f}_T - f\|_2^2 &= \mathbb{E}_{XY}[(\bar{f}_T(X) - Y)^2] - \mathbb{E}_{XY}[(f(X) - Y)^2] \notag \\
    &\leq \frac{1}{T} \sum_{t=1}^T \mathbb{E}_{t-1}[(\hat{f}_t(X_t) - Y_t)^2] - \mathbb{E}_{t-1}[(f(X_t) - Y_t)^2]\, , \label{eq:regret_sto}
\end{align}
where $\mathbb{E}_{t-1}$ denotes the expectation conditioned on the first $t-1$ samples and since $(X_t,Y_t),t\ge1$ are i.i.d. 
This \emph{online-to-batch conversion} principle \citep{littlestone1989line, cesa2004generalization,shalev2025online} ensures that any regret bound for convex losses yields an equivalent excess risk bound \emph{in expectation}. However, despite its popularity, standard online-to-batch conversion typically fails to provide sharp \emph{high-probability} risk guarantees and adaptations of the online algorithms are required \citep{wintenberger2017optimal,van2023high}. While the expected regret may be optimal, fluctuations along the regret trajectory can lead to suboptimal concentration behavior when translated to the batch setting. This issue is particularly severe in nonparametric regimes, where the minimax rates can be as fast as $T^{-\nicefrac{2s}{(2s+d)}}$, which is strictly faster than $T^{-1/2}$ when $s \ge d/2$. In this regime, standard first-order concentration inequalities are insufficient to control these fluctuations at the optimal rate. Recently, \citet{van2023high} proposed a general reduction for deriving high-probability risk bounds from online learning guarantees. However, this approach is not sufficient to attain minimax-optimal rates in the low-regularity regime $s < \tfrac{d}{2}$ where the best known online regret bounds scale as $O(T^{-\nicefrac{s}{d}})$ \citep{rakhlin2014online}, which is strictly slower than the optimal batch risk rate $O(T^{-\nicefrac{2s}{(2s+d)}})$.
The central challenge in this paper is therefore to design online algorithms $(\hat f_t)$ whose adaptive nonparametric regret guarantees in \eqref{eq:regret_sto} can be converted into minimax-optimal \emph{high-probability} risk bounds of the form \eqref{eq:bound_intro}.

Recently, \cite{liautaud2025minimax} introduced a wavelet-based online algorithm that achieves adaptive minimax-optimal regret in the adversarial setting. Their approach relies in particular on so-called \emph{comparator-adaptive} online algorithms, which guarantee regret bounds of order $\tilde O(\|\c\|_1 G \sqrt{T})$ for $G$-Lipschitz losses against any comparator $\c \in \R^N, N\ge 1$. However, in the unbounded stochastic setting considered here, the loss functions are no longer Lipschitz, and such guarantees can fail dramatically. To overcome this difficulty, we employ a \emph{gradient clipping} strategy \citep{zhang2022parameter} with an adaptive online threshold. This modification allows us to recover high-probability control of the regret while preserving comparator adaptivity.
As a by-product of our analysis, we obtain new results for stochastic online convex optimization, establishing regret bounds of the form $\tilde O(\|\c\|_1 (G + \sigma)\sqrt{T})$
holding with high probability for any comparator $\c \in \mathbb R^N$ with $\|\c\|_\infty < \infty$ and with unknown noise level $\sigma$, answering an open question in \cite{zhang2022parameter}.

\paragraph{Notations.}
The notation $O(\cdot)$ hides universal constant factors, while $\tilde O(\cdot)$ additionally hides polylogarithmic factors. For any integer $k \ge 1$, we denote $[k] := \{1,\dots,k\}$. Boldface letters are used to denote multivariate quantities, whereas standard (non-bold) letters denote scalar quantities. We denote $a \vee b = \max\{a,b\}$ and, for any $x \in \R$ and threshold $\tau > 0$, the clipping operator of $x$ in $[-\tau,\tau]$ is defined by $\operatorname{clip}(x,\tau) = x$ if $|x| \le \tau$ and $\operatorname{sign}(x)\tau$ otherwise.

\paragraph{Contributions and outline of the paper.}
In Section~\ref{section:parameter-free-high-prob}, we first derive by-product results for stochastic online convex optimization with adaptive guarantees. We establish high-probability comparator-adaptive regret bounds of order $\tilde O(\|\c\|_1\sqrt{T})$ in the constrained setting $\|\c\|_\infty \le C$, under sub-exponential gradient noise. In Section~\ref{section:adaptive_clipping}, we extend these results to the unknown-noise setting by introducing an adaptive gradient clipping procedure. This method removes the need for prior knowledge of the noise level and is general enough to be applicable in other online learning contexts.
Finally, we leverage these online learning guarantees to design an adaptive nonparametric estimator in Section~\ref{section:risk}. The resulting algorithm achieves minimax, noise-level-aware risk bounds with high probability, without requiring prior knowledge of the noise $\sigma$.

\subsection{Related work}

\paragraph{Nonparametric regression over Besov spaces.}
The minimax theory for nonparametric regression over Besov spaces is well established, with classical results characterizing optimal rates in expectation; see, e.g., \citet{delyon1996minimax,donoho1998minimax}. More recently, \citet{devore2025optimal} derived sharp upper and lower bounds that are explicitly \emph{noise-level–aware} in the batch setting, under the assumption of known Gaussian noise variance. These results highlight the importance of adaptivity to the noise level and motivate the use of online learning techniques to achieve such adaptivity in a data-driven manner.
Recently \citet{liautaud2025minimax} proposed a polynomial-time online algorithm achieving minimax-optimal regret against adversarial sequences and Besov-smooth competitors. While their focus is on adversarial regret, our work leverages related online ideas to derive high-probability, noise-level–adaptive risk guarantees in the stochastic regression setting.

\paragraph{Online-to-Batch.}
Classical online-to-batch reductions \citep{littlestone1989line, cesa2004generalization} relate online regret to batch risk, but are limited to guarantees in expectation and do not yield fast rates with high probability. The route to obtain high probability $O(\tfrac{1}{T})$ excess risk bounds via online to batch conversions was initiated by \cite{wintenberger2017optimal}. More recently, \citet{van2023high} proposed a general reduction for deriving high-probability risk bounds from sequential regret. However, this approach does not recover minimax-optimal rates in our setting, in particular in the low-regularity regime $s < \tfrac{d}{2}$, where the best known online regret bounds scale as $O(T^{-s/d})$ \citep{rakhlin2014online}, which is strictly slower than the optimal batch rate $O(T^{-2s/(2s+d)})$.
In contrast, our analysis departs from generic online-to-batch reductions and relies on a refined control of the stochastic regret in~\eqref{eq:regret_sto}, under a stochastic directional derivative condition (Assumption~\ref{assump:exp-concave}). This approach is inspired by recent work on noisy online convex optimization \citep{wintenberger2024stochastic}, which established high-probability fast rates under sub-Gaussian perturbations, but extends it to a more general setting tailored to nonparametric regression and adaptive wavelet estimation.

\paragraph{Online convex optimization with stochastic unbounded gradients.}
Most online convex optimization algorithms require a priori gradient bounds, or incur exponential penalties when gradients are unbounded \citep{cutkosky2017online}. Recent works address unbounded gradients through stochastic assumptions and gradient clipping. In particular, \citet{jun2019parameter} obtain in-expectation guarantees under sub-exponential noise, while \citet{zhang2022parameter} derive high-probability, parameter-free regret bounds using regularization techniques. As a by-product of our analysis, we complement this line of work by establishing high-probability $\ell_1$ comparator-adaptive regret guarantees in a setting with bounded iterates and unbounded stochastic gradients.

\section{High probability comparator-adaptive regret with constrained iterates and noisy unbounded gradients}
\label{section:parameter-free-high-prob}

We consider the following setting of stochastic online prediction. Let $N \ge 1$ and $(\ell_t : \R^N \to \R_+)_{t\ge 1}$ a collection of random convex differentiable loss functions, sequentially observed. At each time $t$, a learner forms a prediction $\c_t \in \R^N$ based on the past observations $\mathcal F_{t-1} = \{\c_1,\nabla \ell_1(\c_1),\dots, \c_{t-1},\nabla \ell_{t-1}(\c_{t-1})\}$, where $\nabla \ell_s(\c_s)$ is the gradient of $\ell_s$ at $\c_s$. The learner aims at minimizing the cumulative stochastic regret \[
R_T(\c) := \sumT \E_{t-1}[\ell_t(\c_t)] - \E_{t-1}[\ell_t(\c)], \qquad \text{where} \quad \E_{t-1}[\cdot] = \E[\cdot \mid \mathcal F_{t-1}],
\]
with respect to all $\c \in \R^N$ such that $\|\c\|_\infty \leq C$ for some fixed $C > 0$. In particular, considering the Dirac masses, one obtains $\ell_t = \E_{t-1}[\ell_t]$ and the stochastic regret matches the regret more commonly used in the online (adversarial) learning literature.
In this section, we present Algorithm~\ref{alg:param_free_unbounded_gradients}, which achieves a stochastic regret bound of order $\tilde O(\|\c\|_1 (G + \sigma)\sqrt{T})$
with high probability with unbounded gradients of variance $\sigma$ and expected bound $G \ge \big|\E_{t-1}[\nabla \ell_t(\c_t)]\big|$.
In particular, we assume that gradients' noise admit sub-exponential tails, as formalized in Assumption~\ref{assump:subexp}.

% \paragraph{Gradient noise assumption.}

%  We consider that $\bxi_t = \nabla \ell_t(\c_t) - \E_{t-1}[\nabla \ell_t(\c_t)]$ is a sub-exponential noise, whose properties are detailed in the following Assumption~\ref{assump:subexp}.

\begin{assumption}[Sub-exponential gradient noise]
\label{assump:subexp}
The gradient sequence $(\hat \g_t)_{t \ge 1}$ is said to have conditionally sub-exponential noise if there exist constants $\nu,\mu > 0$ such that, for all $t \ge 1$, conditionally on the past $\mathcal F_{t-1}$, the centered gradient noise
\[
\bxi_t := \hat \g_t - \E_{t-1}[\hat \g_t]
\]
satisfies the following tail bound: for all $u \ge 0$,
\[
\max_{1 \le n \le N}
\mathbb P_{t-1}\!\left(
|\xi_{n,t}| \ge u
\right)
\le
\exp\!\left(
-\frac{1}{2}
\min\!\left(
\frac{u^2}{\nu^2},
\frac{u}{\mu}
\right)
\right),
\]
where $\mathbb P_{t-1}$ denotes the conditional probability given the past observations $\mathcal F_{t-1}$.
\end{assumption}

\paragraph{Stochastic directional derivative condition.}
 
We introduce the following assumption on the random losses $(\ell_t)$ revealed by the environment.
\begin{assumption}[Stochastic directional derivative condition] 
\label{assump:exp-concave}
We say that the (random) losses $\ell_t$ satisfy the \emph{stochastic directional derivative condition} for some constant $\alpha > 0$ if for every $\c_1,\c_2 \in [-C,C]^N, C >0$
\[
\E_{t-1}\big[\ell_t(\c_1) - \ell_t(\c_2)\big] \leq \E_{t-1}\Big[\nabla\ell_t(\c_1)^\top(\c_1 - \c_2) - \frac{\alpha}{2}(\nabla \ell_t(\c_1)(\c_1 - \c_2))^2\Big]\,.
\]
\end{assumption}
Remark that the stochastic directional derivative condition holds for non-convex losses and unbounded gradients. Setting $\alpha= 0$ coincides with the convexity of $\E_{t-1}[\ell_t], t\ge1$. This assumption is a weaker condition than exp-concavity of the losses $(\ell_t)$, and follows, in the deterministic setting, from the exp-concavity of the loss functions \citep[Lemma 4.3][]{hazan2016introduction}. It coincides with Assumption {\bf (H2)} of \cite{wintenberger2024stochastic}.

\subsection{Algorithm: Adaptive Learning with Unbounded Noisy Gradients via Clipping}
\label{section:learning_with_known_clipping}

We now introduce an online learning algorithm designed to operate in the presence of unbounded gradient observations, while retaining high-probability, comparator-adaptive regret guarantees of type $\tilde O(\|\c\|_1\sqrt{T})$. 
Our construction is modular: each coordinate is updated by an independent one-dimensional online subroutine, which receives clipped gradient feedback. This design allows us to leverage generic comparator-adaptive online algorithms (satisfying Assumption~\ref{assump:parameter-free-algo}), while ensuring robustness in regimes where gradient magnitudes may be large.

\begin{algorithm}[htbp]
\caption{Adaptive Learning with Unbounded Noisy Gradients via Clipping at time $t$%Coordinate-wise clipping with unbounded gradients and constrained iterates.
}
\label{alg:param_free_unbounded_gradients}
    \textbf{Input:} $N \ge 1$ algorithms $\mathcal A_1, \dots \mathcal A_N$ satisfying Assumption~\ref{assump:parameter-free-algo}, diameters $(C_n)$ of feasible sets, bounds on gradients $|\E_{t-1}[\nabla \ell_t(\c_t)_n]|\le G_{n,t}, n\in [N]$ and clipping margin $\Delta_t > 0$ \;
    % \textbf{Initialization:} $\c_1 = \tilde \c_1 = 0_{\R^N}$ \;
    % \For{$t = 1, \dots, T$}{
    Predict $\c_t = (c_{n,t})_{n=1}^N$ \;
    Receive noisy gradient $\hat \g_t = \nabla \ell_t(\c_t)$\;% noisy gradient of $L_t(\c_t)$ \;
    \For{$n = 1, \dots, N$}{
    Define clipping threshold as $\bar G_{n,t} = G_{n,t} + \Delta_t$\;
    Clip gradient $\bar g_{n,t} = \max(-\bar G_{n,t},\min(\bar G_{n,t},\hat g_{n,t}))$ or $\bar g_{n,t} = 0$ if $G_{n,t} = 0$\; %\footnotemark\;
    Receive $c_{n,t+1} \in [-C_n,C_n]$ from $\mathcal A_n$ using $(\bar g_{n,t}, \bar G_{n,t})$ \;
    }
    % }
    \textbf{Output:} $\c_{t+1}$
\end{algorithm}

\paragraph{Assumption on the coordinate subroutine.}
We assume that each coordinate subroutine $\mathcal A_n, n \in [N]$ satisfies the following assumption.

\begin{assumption}[Comparator-adaptive algorithm]
\label{assump:parameter-free-algo}
Let $T\ge1, C>0$ and let $g_1,\dots,g_T\in\mathbb R$ be a sequence of scalar gradients such that
$|g_t|\le G_t$ for all $t\le T$.
An online algorithm $\mathcal A$ starting at $c_1 \in [-C,C]$ and producing predictions $(c_t)_{t=1}^T \subset [-C,C]$
is said to be \emph{comparator-adaptive} if for some factors $\Xi_1,\Xi_2>0$ and for any comparator $c\in [-C,C]$,
\[
\sum_{t=1}^T g_t (c_t - c)
\;\le\;
|c-c_1|
\bigg(
\Xi_1 \,\sqrt{\textstyle \sum_{t=1}^T g_t^2}
+ \Xi_2\,\sup_{1\le t \le T} G_t \bigg).
\]
\end{assumption}

This assumption holds for a broad class of first-order online learning algorithms such as online mirror descent with self-tuned learning rates or coin-betting style updates \citep{orabona2016coin, cutkosky2018black, mhammedi2020lipschitz, jacobsen2022parameter, chen2021impossible}. These are referred to as \emph{comparator-adaptive} algorithms as they provide optimal adaptivity to the magnitude of the comparator $|c|$ in the regret, typically at the cost of logarithmic factors absorbed into $\Xi_1$ and $\Xi_2$. 
Algorithms satisfying the linear regret bound in Assumption~\ref{assump:parameter-free-algo} are primarily developed for the unconstrained setting ($C = \infty$). We here consider an extension in which the predictions $c_t$ are constrained to $[-C,C]$, typically at the cost of a multiplicative constant in the regret, which we also absorb into the factors $\Xi_1$ and $\Xi_2$ \citep[see, e.g.,][]{cutkosky2018black}.
Importantly, these types of algorithms are not Lipschitz-adaptive: they require prior knowledge of a gradient bound $G_t \ge |g_t|$ at each time step $t \ge 1$, and failing to do so results in an exponential penalty in the regret bound \citep{cutkosky2017online,mhammedi2020lipschitz}. In our unbounded-gradient setting, this limitation can be overcome via gradient clipping (in Alg.~\ref{alg:param_free_unbounded_gradients}), provided that the clipping level is carefully calibrated.

\paragraph{First result: high probability comparator-adaptive regret with known noise level.}

We first establish a guarantee for Algorithm~\ref{alg:param_free_unbounded_gradients} under the assumption that, at each time $t$, the algorithm is provided with the optimal clipping threshold $\Delta_t$, which depends on the noise parameters $(\nu,\mu)$ appearing in Assumption~\ref{assump:subexp}. This oracle assumption is subsequently removed in the following Section~\ref{section:adaptive_clipping}, where we design an adaptive gradient clipping strategy.

\begin{theorem}[High-probability $\ell_1$-comparator-adaptive regret]
\label{th:param-free_high_prob}
Let $G,C>0$, $\delta\in(0,1)$, and $N\in\mathbb N^*$.
Assume $(\ell_t)_{t \ge 1}$ satisfy Assumption~\ref{assump:exp-concave} and at each time step $t\ge 1$ we are given a stochastic gradient 
$\nabla \ell_t(\c_t) \in \mathbb R^N$ satisfying Assumption~\ref{assump:subexp} with parameter $\nu,\mu > 0$, and
\[
\|\E_{t-1}[\nabla \ell_t(\c_t)]\|_\infty \le G
\quad \text{and} \quad
\max_{1 \leq n \leq N} \E_{t-1}\!\left[|\nabla \ell_t(c_t)_n-\E_{t-1}[\nabla \ell_t(c_t)_n]|^2\right]
\le \sigma^2.
\]
Then Algorithm~\ref{alg:param_free_unbounded_gradients}, run over $T$ steps with $C_n = C$, $G_{n,t} = G$ for every $n \in [N], t\ge 1$ and clipping
margins 
\[\Delta_t = \bigg(\nu\sqrt{2\log\Big(\tfrac{(\sqrt{\pi/2}\nu+2\mu)t}{\log(\delta^{-1})}\Big)} \vee 2\mu\log\Big(\tfrac{(\sqrt{\pi/2}\nu+2\mu)t}{\log(\delta^{-1})}\Big)\bigg), \; t\ge 1
\]
satisfies, with probability at least $1-2\delta$, for all comparators
$\c \in\mathbb R^N$ such that $\|\c\|_\infty \le C$,
\[
R_T(\c)
\le O \bigg[
\|\c\|_1(G+\sigma)\sqrt{T}
+
\Big(\big(CN(G + (\nu \vee \mu)\log T)\big)^2 + \frac{1}{\alpha}\Big)\log(\delta^{-1})\bigg].
\]
% \[
% R_T(\c)
% \le O \bigg[
% \|\c\|_1\Big(
% \Xi_1(G+\sigma)\sqrt{T}
% +
% \Xi_2(G + (\nu \vee \mu)\log T)\Big)
% +
% \Big((CN(G + (\nu \vee \mu)\log T))^2 + \frac{1}{\alpha}\Big)\log(\delta^{-1})\bigg].
% \]
\end{theorem}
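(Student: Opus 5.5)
We start from Assumption~\ref{assump:exp-concave} with $\c_1=\c_t$, $\c_2=\c$. Writing $\g_t:=\E_{t-1}[\hat\g_t]$, this gives
\[
\E_{t-1}[\ell_t(\c_t)-\ell_t(\c)] \le \g_t^\top(\c_t-\c) - \tfrac{\alpha}{2}\E_{t-1}\big[(\hat\g_t^\top(\c_t-\c))^2\big].
\]
We then decompose the linear term as
\[
\g_t^\top(\c_t-\c) = \bar\g_t^\top(\c_t-\c) + (\g_t-\hat\g_t)^\top(\c_t-\c) + (\hat\g_t-\bar\g_t)^\top(\c_t-\c),
\]
where $\bar\g_t$ is the clipped gradient. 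This yields three terms, controlled in turn:
\begin{itemize}
\item[(i)] the \emph{linearized regret on clipped gradients}, $\sum_t \bar\g_t^\top(\c_t-\c)$;
\item[(ii)] the \emph{martingale term} $\sum_t \bxi_t^\top(\c-\c_t)$;
\item[(iii)] the \emph{clipping bias} $\sum_t (\hat\g_t-\bar\g_t)^\top(\c_t-\c)$.
\end{itemize}

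\textbf{Term (i).} Since $|\bar g_{n,t}|\le G+\Delta_t\le G+\Delta_T$, Assumption~\ref{assump:parameter-free-algo} applied coordinatewise (with $c_{n,1}=0$) gives
\[
\sum_n |c_n|\Big(\Xi_1\sqrt{\textstyle\sum_t \bar g_{n,t}^2}+\Xi_2(G+\Delta_T)\Big).
\]
We bound $\bar g_{n,t}^2\le 2g_{n,t}^2+2\xi_{n,t}^2$, where clipping only reduces the magnitude, roughly. The quantity $\sum_t\xi_{n,t}^2$ is then controlled with high probability by $T\sigma^2$ plus a deviation term. For this we apply a Bernstein/Freedman inequality for sub-exponential squared variables, truncated at level $\Delta_t$, which is where the $\log(\delta^{-1})$ and $(\nu\vee\mu)\log T$ contributions enter. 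This yields $\|\c\|_1\big((G+\sigma)\sqrt T+\text{lower order}\big)$. The lower-order part $\|\c\|_1(G+\Delta_T)\sqrt{\log(1/\delta)}$ is absorbed by AM-GM into $\|\c\|_1(G+\sigma)\sqrt T$ plus the squared additive term, using $\|\c\|_1\le CN$.

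\textbf{Term (iii).} On the event $\{|\xi_{n,t}|\le\Delta_t\ \forall n\}$ we have $|\hat g_{n,t}|\le G+\Delta_t$, so no clipping occurs. The choice of $\Delta_t$ makes $\mathbb P_{t-1}(|\xi_{n,t}|>\Delta_t)$ of order $\log(\delta^{-1})/((\sqrt{\pi/2}\nu+2\mu)t)$. We bound the expected excess $\E_{t-1}[(|\xi_{n,t}|-\Delta_t)_+]$ by integrating the tail of Assumption~\ref{assump:subexp}; the constant $\sqrt{\pi/2}\nu+2\mu$ is exactly the tail integral. This gives an expected bias of order $\log(\delta^{-1})/t$ per step and coordinate. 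We then convert to high probability: the random sum $\sum_t\sum_n|\hat g_{n,t}-\bar g_{n,t}|\,2C$ is a sum of nonnegative variables with small conditional means, and a multiplicative Chernoff/Freedman bound controls it by $O(CN(G+(\nu\vee\mu)\log T)\log(\delta^{-1}))$. This is dominated by the stated additive term.

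\textbf{Term (ii) — the main obstacle.} This martingale increment has conditional variance $\E_{t-1}[(\bxi_t^\top(\c_t-\c))^2]$ and is unbounded. Since the bound must hold uniformly over all $\c$, it is \emph{not} enough to apply Freedman for a fixed $\c$. Instead we split
\[
\sum_t\bxi_t^\top\c_t - \sum_t\bxi_t^\top\c.
\]
\begin{itemize}
\item For the second piece, $|\sum_t\xi_{n,t}c_n|\le|c_n|\,|\sum_t\xi_{n,t}|$. A union bound over $N$ coordinates with Bernstein gives $\|\c\|_1\big(\sigma\sqrt{T\log(N/\delta)}+\mu\log(N/\delta)\big)$, absorbed into the main term.
\item The first piece does not depend on $\c$, but still requires a variance that is paid back. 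Here we use the negative quadratic term $-\frac{\alpha}{2}\E_{t-1}[(\hat\g_t^\top(\c_t-\c))^2]$ from Assumption~\ref{assump:exp-concave}, following \cite{wintenberger2024stochastic}. We apply a self-normalized (Freedman-type) inequality to the full increment $\bxi_t^\top(\c_t-\c)$, truncated at $\Delta_t$. With high probability it is at most $\lambda\sum_t\E_{t-1}[(\bxi_t^\top(\c_t-\c))^2]+\log(\delta^{-1})/\lambda$ plus truncation remainders. Using $\E_{t-1}[(\bxi_t^\top u)^2]\le\E_{t-1}[(\hat\g_t^\top u)^2]$ (variance is at most the second moment) and $\lambda=\alpha/2$, the variance term is cancelled, leaving $2\log(\delta^{-1})/\alpha$.
\end{itemize}
Uniformity in $\c$ for this step is the delicate point. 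We would handle it by using the $\c$-free decomposition above for the $\c$-dependent part and applying the self-normalized bound only to $\sum_t\bxi_t^\top\c_t$ with the variance bounded through $(a-b)^2\le 2a^2+2b^2$. The residual $\sum_t\E_{t-1}[(\bxi_t^\top\c)^2]\le\|\c\|_1^2\sigma^2T$ is then handled by optimizing an AM-GM split, which may only produce the $\|\c\|_1\sigma\sqrt T$ term.

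Finally, a union bound over the two high-probability events gives the stated probability $1-2\delta$.
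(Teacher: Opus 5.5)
Your decomposition departs from the paper's exactly where it matters, and as written it does not give the stated bound. The problem is term (ii).

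The coordinatewise Bernstein step yields $\|\c\|_1\sigma\sqrt{T\log(N/\delta)}$, and this is not ``absorbed into the main term''. The statement has no $\delta$- or $N$-dependent factor multiplying $\sigma\sqrt T$, and AM--GM cannot move a $\sqrt{T\log(N/\delta)}$ term into the additive $\log(\delta^{-1})$ part.

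The $\c$-free piece is worse as written. After $(a-b)^2\le 2a^2+2b^2$, cancelling $\E_{t-1}[(\hat\g_t^\top(\c_t-\c))^2]$ forces $\lambda\le\alpha/4$. The residual $2\lambda\sumT\E_{t-1}[(\bxi_t^\top\c)^2]$ can then be of order $\alpha\|\c\|_1^2\sigma^2T$, which is linear in $T$. Recovering $\sqrt T$ needs $\lambda\asymp\sqrt{\log(\delta^{-1})}/(\|\c\|_1\sigma\sqrt T)$, i.e.\ a bound holding uniformly over a grid of $\lambda$. That is legitimate here, since $\sumT\bxi_t^\top\c_t$ does not involve $\c$. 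Even then you only get $\|\c\|_1\sigma\sqrt{T\log(\delta^{-1})}$.

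So your route proves a uniform-in-$\c$ bound $\tilde O(\|\c\|_1(G+\sigma)\sqrt T)$ with a $\sqrt{\log(N/\delta)}$ loss on the leading term. Keeping $\log(\delta^{-1})$ off the $\sqrt T$ term is precisely what the paper's argument is built for.

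Your first instinct in (ii) is the paper's mechanism: a self-normalized bound on the full comparator-dependent increment, with its whole variance cancelled. The missing step is to apply it to the \emph{clipped} increment, centered at its own conditional mean, instead of splitting off the raw noise. With $Z_t=\hat\g_t^\top(\c_t-\c)$ and $\bar Z_t=\bar\g_t^\top(\c_t-\c)$, the paper writes the regret as $\sumT\E_{t-1}[Z_t-\bar Z_t]+\sumT\big(\E_{t-1}[\bar Z_t]-\frac{\alpha}{2}\E_{t-1}[\bar Z_t^2]\big)$. This buys three things:
(a) Since $|\bar Z_t|\le D=2\sup_t\sum_n C_n\bar G_{n,t}$, the second inequality of Lemma~\ref{lemma:concentration_second_order} with $\gamma=\alpha/3$ cancels the entire variance, $\c$-part included. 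It leaves $(2D^2+6/\alpha)\log(\delta^{-1})$, which is the source of $(CN(G+\Delta))^2\log(\delta^{-1})$.
(b) The clipping bias appears only under $\E_{t-1}$, and is bounded directly by the tail integral and the choice of $\Delta_t$. You need no truncation of the martingale and no high-probability bound on the realized bias, so your term (iii) disappears.
(c) The only $\sqrt T$ comes from Assumption~\ref{assump:parameter-free-algo} on clipped gradients. Concentrating the bounded $\bar g_{n,t}^2$ (rather than $\xi_{n,t}^2$) puts $\sqrt{\log(\delta^{-1})}$ only on $G+\Delta$.

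Two points in your favour.
First, that argument is for a fixed $\c$: the event produced by Lemma~\ref{lemma:concentration_second_order} depends on $\c$. Your concern about uniformity is therefore legitimate and is not addressed by the paper's proof. Your split is one valid way to buy uniformity, at the logarithmic price above.
Second, the paper replaces $\E_{t-1}[Z_t^2]$ by $\E_{t-1}[\bar Z_t^2]$ using $|\bar Z_t|\le|Z_t|$. That inequality fails for coordinatewise clipping when $N\ge 2$: with $\hat\g_t=(10,-1)$, $\c_t-\c=(1,10)$ and threshold $1$, one gets $Z_t=0$ but $\bar Z_t=-9$. The step needs a correction via $\E_{t-1}[\bar Z_t^2]\le 2\E_{t-1}[Z_t^2]+2\E_{t-1}[(Z_t-\bar Z_t)^2]$, controlled by the same tail bounds. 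Your comparison $\E_{t-1}[(\bxi_t^\top u)^2]\le\E_{t-1}[(\hat\g_t^\top u)^2]$ on unclipped quantities avoids this issue cleanly.
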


The proof is postponed to Appendix~\ref{appendix:proof_theo_param-free_high_prob}, where we state and prove a more general version of Theorem~\ref{th:param-free_high_prob}. This full result, stated as Theorem~\ref{th:param-free_high_prob_general}, will be instrumental for our regression setting in the next section, where we consider a multiscale approach with heterogeneous diameters. 

Note that Algorithm~\ref{alg:param_free_unbounded_gradients} requires the noise parameters $\mu,\nu>0$ to be specified in advance. In Section~\ref{section:adaptive_clipping}, we remove this requirement and develop an adaptive clipping procedure that achieves the same guarantees. In particular, the variance parameter $\sigma>0$ is one dimensional and not passed to the algorithms while it appears optimally \citep{jun2019parameter} in our regret bound $\tilde O(\|\c\|_1(G+\sigma)\sqrt{T})$ that holds in high probability.
Note also that for simplicity we stated Theorem~\ref{th:param-free_high_prob} with uniform noise in $(\nu,\mu,\sigma)$ over the coordinates. Defining a coordinate dependent clipping margin with $(\mu_n, \nu_n), n \in [N]$ would lead to an adaptive regret bound scaling as $O\big(\sum_n |c_n|(G+\sigma_n)\sqrt{T} + C^2(\nu_n \vee \mu_n)^2\log(T)^2\big)$, with high probability.

Our Theorem~\ref{th:param-free_high_prob} also parallels the results of \citet{zhang2022parameter}. As discussed in their work, even in the constrained setting where $\|\c_t\|_\infty \le C, t\ge 1$, obtaining a high-probability regret bound of order $\tilde O(\|\c\|_1\sqrt{T})$ is nontrivial. Indeed, a direct application of standard sub-exponential martingale concentration inequalities typically introduces an additional deviation term of order $\tilde O(NC\sqrt{T})$, which is too coarse to preserve the desired dependence on $\|\c\|_1$. In contrast, under Assumption~\ref{assump:exp-concave}, we are able to leverage second-order concentration tools based on self-normalized martingale inequalities. This approach allows us to control the deviation term at the cost of only an additional $O(\log^2 T)$ factor, thereby maintaining the target high-probability regret bound $\tilde O(\|\c\|_1\sqrt{T})$ without resorting to regularized or surrogate loss functions. We also note that Theorem~5 of \citet{zhang2022parameter} does not yield regret bounds with $\ell_1$-norm comparator dependence. Their high-probability guarantees are instead restricted to $\|\c\|_p$-dependence for some $p \in (1,2]$. In contrast, maintaining an optimal dependence of order $\tilde O(\|\c\|_1\sqrt{T})$ with high probability is essential for our analysis. This $\ell_1$-comparator adaptivity plays a crucial role in Section~\ref{section:risk}, where it enables the derivation of adaptive nonparametric risk bounds.

% \paragraph{Complexity of Algorithm~\ref{alg:param_free_unbounded_gradients}} Coordinate wise = clipping are very simple. If most gradients are null (as in our regression setting later), we gain over 'global' methods optimizing over the full gradient, see \cite{liautaud2025minimaxlip} Eq. (6) for a discussion.

\subsection{Adaptive clipping strategy}

\label{section:adaptive_clipping}

In Theorem~\ref{th:param-free_high_prob}, we showed that Algorithm~\ref{alg:param_free_unbounded_gradients}, when run with an optimal clipping threshold $\bar G^* = G + \Delta^*$ that exceeds the true gradient bound $G \ge \|\E_{t-1}[\nabla \ell_t(\c_t)]\|_\infty$ by a logarithmic margin $\Delta^* \propto (\nu \vee \mu)\log T$, achieves a high-probability regret guarantee of $O\big(\|\c\|_1(G+\sigma)\sqrt{T} + (CN)^2(G+\Delta^*)^2\big)$ for any $\c \in \R^N, \|\c\|_\infty \le C$. In this section, we design a procedure that achieves similar comparator-adaptive guarantees as in Theorem~\ref{th:param-free_high_prob}, \emph{without} requiring prior knowledge of the clipping margin $\Delta^*>0$, which depends on the unknown noise parameters $(\nu,\mu)$. As a consequence, the proposed strategy adapts sequentially to the level of the stochastic noise.

\paragraph{Adaptive clipping via expert aggregation.} Given $T \ge 1$, we define the following set of clipping margins to pass to Algorithm~\ref{alg:param_free_unbounded_gradients}
\begin{equation}
\label{eq:grid_margin_clipping}
\calD := \big\{\Delta = 0, \dots, \lfloor \sqrt T \rfloor +1\big\}.
\end{equation}
The procedure consists in running $|\calD| = O(\sqrt{T})$ independent instances of Algorithm~\ref{alg:param_free_unbounded_gradients}, with a bound on the true gradient for every time $t \ge 1$, $G \ge \|\E_{t-1}[\nabla \ell_t(\c_t)]\|_\infty, t\ge 1$ and with a clipping margin set at each $\Delta \in \calD$. Each instance then produces a prediction $\c_t^{\Delta}$ at time $t$ associated to each margin $\Delta \in \calD$ and we leverage an expert algorithm satisfying Assumption~\ref{assump:second_order_algo} to sequentially aggregates the predictions $(\c_t^{\Delta})_{\Delta \in \calD}$ as, for every $t\ge 1$
\begin{equation}
\label{eq:aggreg_clipping}
\hat \c_t = \sum_{\Delta \in \calD} w_t^{\Delta}\c_t^{\Delta}, \qquad \text{where} \qquad w_t^{\Delta} \ge 0\,, \; \sum_{\Delta \in \calD} w_t^{\Delta} = 1,\; t \ge 1.
\end{equation}
Remark that given the bound of Theorem~\ref{th:param-free_high_prob} scaling as $O\big(\|\c\|_1(G+\sigma)\sqrt{T} + (CN(G+\Delta))^2\big)$, a clipping margin at $\Delta = \sqrt T$ leads to a bound in $O(T)$ which is of the same order of the worst regret case bound $O(NCGT)$\footnote{For every $t\ge 1$, one has $\E_{t-1}[\ell_t(\c_t)] - \E_{t-1}[\ell_t(\c)] \leq \|\E_{t-1}[\nabla\ell_t(\c_t)]\|_\infty \|\c_t - \c\|_1 \leq 2NCG$, by convexity of $c \mapsto \E_{t-1}[\ell_t(c)]$ induced by Assumption~\ref{assump:exp-concave}, with $\alpha=0$.}, which justifies that the range $\sqrt{T}$ of the grid $\calD$ is sufficient.

\paragraph{Assumption on the expert aggregation algorithm.} We introduce the following assumption for the expert aggregation subroutine we use in \eqref{eq:aggreg_clipping}.
\begin{assumption}[Second-order adaptive algorithm]
\label{assump:second_order_algo}
Let $T, E \ge 1$ and let $\hat \nabla_1, \dots, \hat \nabla_T \in \mathbb{R}^E$ be any sequence of gradients. An expert aggregation algorithm is said to be \emph{second-order adaptive} if, starting from the uniform prior $\w_1 = (\tfrac 1 E)_{1\le e \le E}$, it produces iterates $\w_t = (w_t^{e})_{1\le e \le E}$ such that $\sum_{e\in E} w_t^{e}= 1$ and for every $e \in \{1, \dots, E\}$, the following regret bound holds:
\begin{equation*}
% \label{eq:second_order_bound}
\sum_{t=1}^T \hat \nabla_t^\top \w_t - \hat \nabla_{e,t}
\le
\Xi_3 \sqrt{\log(E) \sum_{t=1}^T (\hat \nabla_t^\top \w_t - \hat \nabla_{e,t})^2}
+
\Xi_4 \log(E) \max_{1 \le t \le T} \|\hat \nabla_t\|_\infty,
\end{equation*}
where $\Xi_3$ and $\Xi_4$ are universal and independent of the gradient range and can hide $\log\log T$ terms.
\end{assumption}
Assuming prior knowledge of a bound on the gradients, \texttt{MLProd} \citep{gaillard2014second} was the first algorithm to achieve this type of second-order regret guarantee. Building on this work, a substantial line of research has since focused on designing algorithms that enjoy such guarantees while being \emph{scale-free} and \emph{Lipschitz-adaptive} \citep[see, e.g.,][for a review]{mhammedi2019lipschitz}. Well-known instances of this class include \texttt{BOA} \citep{wintenberger2017optimal} and \texttt{Squint} \citep{koolen2015second}. Unlike standard methods that require an \emph{a priori} bound on the sup-norm of the gradients (e.g., those satisfying Assumption~\ref{assump:parameter-free-algo}), algorithms satisfying Assumption~\ref{assump:second_order_algo} adapt both to the observed gradient range $\max_{t \le T} \|\hat \nabla_t\|_\infty$ and to the cumulative quadratic variation of the prediction errors.

\paragraph{Second result: high probability comparator-adaptive regret with adaptive clipping.}

We now establish a high-probability comparator-adaptive regret bound in Theorem~\ref{theo:adapt_clipping_param_free}, for the strategy discussed in \eqref{eq:aggreg_clipping}, when the noise level is unknown. By introducing an adaptive clipping scheme, the algorithm automatically calibrates the clipping thresholds from the observed data, while retaining the same order of regret as in the oracle setting up to logarithmic factors.

\begin{theorem}
\label{theo:adapt_clipping_param_free}
Let $T, N \ge 1$ and $\delta \in (0,1)$. Assume $(\ell_t)_{t \ge 1}$ satisfy Assumption~\ref{assump:exp-concave} for some $\alpha >0$ and for every $t \ge 1, \c \in \R^N, \|\c\|_\infty \leq C, \nabla \ell_t(\c)$ satisfies Assumption~\ref{assump:subexp}, 
\[
\|\E_{t-1}[\nabla \ell_t(\c)]\|_\infty \leq G \quad \text{and} \quad \max_{1\le n \le N} \E_{t-1}[|\nabla \ell_t(\c)_n - \E_{t-1}[\nabla \ell_t(\c)_n]|^2] \le \sigma^2\,.
\] 
With sequential predictions $\hat \c_t = \sum_{\Delta \in \calD} w_t^{\Delta} \c_t^{\Delta}, t\ge 1$ as in \eqref{eq:aggreg_clipping} with $(w_t^{\Delta})_{\Delta \in \calD}$ satisfying Assumption~\ref{assump:second_order_algo}, the regret is bounded with probability $1-6\delta$ as
\begin{multline*}
    R_T(\c) \leq  O\bigg[\|\c\|_1(G+\sigma)\sqrt{T} 
    % + C(\Xi_3,\Xi_4,\alpha)NC\max(\nu,\mu)\log^2(T) \\
    % \\ + \bigg(\frac{\Xi_3^2}{\alpha} + \Xi_4NC\Big(G + 2\mu\log(TN)\Big)\bigg) \log(\sqrt T) \\
 + \bigg((\alpha + 1)  \Big(NC \Big(G + (\nu
 \vee \mu)\log(T)\Big)\Big)^2  + \frac{1}{\alpha}\bigg) \log(\delta^{-1}) \bigg]\,.
\end{multline*}
\end{theorem}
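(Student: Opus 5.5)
The proof splits the regret of the aggregate into two parts. For any fixed $\Delta\in\calD$,
\[
R_T(\c)=\sum_{t=1}^T\E_{t-1}[\ell_t(\hat\c_t)-\ell_t(\c_t^{\Delta})]+R_T^{\Delta}(\c),
\]
where $R_T^{\Delta}$ is the stochastic regret of the instance of Algorithm~\ref{alg:param_free_unbounded_gradients} run with margin $\Delta$. We pick $\Delta^\star\in\calD$ as the smallest grid point with $\Delta^\star\ge\Delta^*\propto(\nu\vee\mu)\log T$; note $\Delta^\star\le\Delta^*+1$. If $\Delta^*>\lfloor\sqrt T\rfloor+1$, the claimed bound exceeds the trivial bound $2NCGT$ (footnote above) and there is nothing to prove, so we may assume $\Delta^\star$ exists.

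\emph{Instance regret.} The clipping analysis of Theorem~\ref{th:param-free_high_prob}, in its general form Theorem~\ref{th:param-free_high_prob_general}, only needs the margin to dominate the noise quantile. It therefore applies to a constant margin $\Delta^\star\ge\Delta_t$. One subtlety is that the noise assumption in Theorem~\ref{theo:adapt_clipping_param_free} is stated for every $\c$, so it holds along the trajectory $\c_t^{\Delta^\star}$ of that instance. With probability $1-2\delta$ this gives
\[
R_T^{\Delta^\star}(\c)\le O\Big(\|\c\|_1(G+\sigma)\sqrt T+\big((CN(G+\Delta^\star))^2+\tfrac1\alpha\big)\log\delta^{-1}\Big),
\]
which is already of the target form.

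\emph{Aggregation term.} This is the main step. Apply Assumption~\ref{assump:exp-concave} with $\c_1=\hat\c_t$ and $\c_2=\c_t^{\Delta^\star}$, both in $[-C,C]^N$. Set $\hat\nabla_{\Delta,t}=\nabla\ell_t(\hat\c_t)^\top\c_t^{\Delta}$, so that $\hat\nabla_t^\top\w_t-\hat\nabla_{\Delta,t}=\nabla\ell_t(\hat\c_t)^\top(\hat\c_t-\c_t^{\Delta})=:r_t$. Then
\[
\E_{t-1}[\ell_t(\hat\c_t)-\ell_t(\c_t^{\Delta^\star})]\le\E_{t-1}\big[r_t-\tfrac\alpha2r_t^2\big].
\]
The unconditional $r_t$ is handled by Assumption~\ref{assump:second_order_algo}, fed with the unclipped gradients $\hat\nabla_t$. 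This gives $\sum_t r_t\le\Xi_3\sqrt{\log|\calD|\sum_t r_t^2}+\Xi_4\log|\calD|\max_t\|\hat\nabla_t\|_\infty$.

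Passing from conditional expectations to realized values requires a self-normalized martingale inequality for $Z_t=\E_{t-1}[r_t-\frac\alpha2r_t^2]-(r_t-\frac\alpha2 r_t^2)$, in the spirit of \cite{wintenberger2024stochastic}. It has the form $\sum_t(\E_{t-1}[r_t]-r_t)\le\frac\alpha4\sum_t(r_t^2+\E_{t-1}[r_t^2])+\frac{c}{\alpha}\log\delta^{-1}$, valid with probability $1-\delta$. Because the noise is sub-exponential and not bounded, we apply it after truncating $r_t$ at level $2NC(G+\Delta^*)$. A union bound over $t$ with Assumption~\ref{assump:subexp} shows no truncation occurs with probability $1-\delta$. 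The same event bounds $\max_t\|\hat\nabla_t\|_\infty\le 2NC(G+(\nu\vee\mu)\log(TN))$, which controls the $\Xi_4$ term, of order $NC(G+(\nu\vee\mu)\log T)\log T$.

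Combining, the remaining negative quadratic term $-\frac\alpha4\sum r_t^2$ absorbs the $\Xi_3$ square root: $\Xi_3\sqrt{\log|\calD|\,S}-\frac\alpha4S\le\Xi_3^2\log|\calD|/\alpha$. A similar step handles the cross term between $\sum_t\E_{t-1}[r_t^2]$ and $\sum_t r_t^2$, which needs a second concentration for the quadratic terms (another $\delta$). This is where the $(\alpha+1)(NC(G+(\nu\vee\mu)\log T))^2\log\delta^{-1}$ term arises, since the range of $r_t^2$ enters the variance proxy multiplied by $\alpha$.

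I expect this concentration of the aggregation term under unbounded noise to be the main obstacle: the self-normalized bound must be balanced against the curvature $\alpha$ while the range is controlled only with high probability. A union bound over the failure events (instance regret $2\delta$, truncation, and the two martingale bounds) totals $6\delta$. Finally, $\log|\calD|=O(\log T)$ is absorbed into the $O(\cdot)$ and polylog factors, which yields Theorem~\ref{theo:adapt_clipping_param_free}.
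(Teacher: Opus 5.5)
Your architecture matches the paper's proof. It uses the same split into aggregation regret plus the regret of the instance with the smallest grid margin above the oracle one, and the same reuse of Theorem~\ref{th:param-free_high_prob_general} for that instance. For the aggregation term it follows the same chain: Assumption~\ref{assump:exp-concave}, the Bercu--Touati type bound of Lemma~\ref{lemma:concentration_second_order}, Assumption~\ref{assump:second_order_algo} with Young, and truncation used only inside the analysis.

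\textbf{The quadratic terms do not cancel with your constants.} With the inequality in the form you display ($\gamma=\alpha/2$), combining it with Assumption~\ref{assump:exp-concave} leaves $\sum_t r_t+\frac\alpha4\sum_t r_t^2-\frac\alpha4\sum_t\E_{t-1}[r_t^2]+\frac c\alpha\log\delta^{-1}$. The negative term is on the \emph{conditional} second moments. The realized $S=\sum_t r_t^2$ enters with a plus sign, as does the $\frac\eta2 S$ from Young's inequality applied to $\Xi_3\sqrt{\log|\calD|\,S}$. So there is no $-\frac\alpha4 S$ available to absorb anything.

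You must first move $S$ to conditional moments via $\sum_t\bar r_t^2\le2\sum_t\E_{t-1}[\bar r_t^2]+2D^2\log\delta^{-1}$, where $\bar r_t$ is $r_t$ truncated at level $D$ (Corollary~\ref{cor:concentration_expectation}, on the no-truncation event). The factor $2$ then leaves a net coefficient $\frac\alpha4+\eta>0$ in front of $\sum_t\E_{t-1}[\bar r_t^2]$, which can grow linearly in $T$. The paper instead takes $\gamma<\alpha/3$ and $\eta=(\alpha-3\gamma)/2$, so that the coefficient $\frac{2\eta+3\gamma-\alpha}2$ vanishes (concretely $\gamma=\alpha/4$). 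The $\alpha\,(NC(\cdot))^2\log\delta^{-1}$ part of the bound is then exactly the cost $(\eta+\gamma)D^2\log\delta^{-1}$ of this square concentration.

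Centering $r_t-\frac\alpha2 r_t^2$ jointly, as your definition of $Z_t$ suggests, fails at the same constant. Converting the leftover $+\frac\alpha4\sum_t\E_{t-1}[r_t^2]$ back to realized squares costs at least $\frac\alpha4 S$. Together with Young's $\frac\eta2 S$, this exceeds the $-\frac\alpha4 S$ you kept.

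\textbf{Truncation is in the wrong place and at the wrong level.} The first inequality of Lemma~\ref{lemma:concentration_second_order} holds for arbitrary square-integrable adapted variables, so the first-order step needs no truncation. If you truncate there, the no-truncation event only identifies realized values. It leaves the conditional bias $\sum_t\E_{t-1}[r_t-\bar r_t]$ unaccounted for. Truncation is needed only for concentrating $S$ and for bounding $\max_t\|\hat\nabla_t\|_\infty$. To survive the union bound over $N$ coordinates and $T$ rounds, its level must be $G+O((\nu\vee\mu)\log(NT/\delta))$ per coordinate (the paper's $\tau_{n,t}$). The Theorem~\ref{th:param-free_high_prob} margin $\Delta^*=O((\nu\vee\mu)\log T)$ is not enough.

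One point in your favour: truncating the scalar $r_t$ at $D$ gives $|\bar r_t|\le|r_t|$ pointwise. That is exactly what the transfer $-\frac{\alpha-\gamma}2\E_{t-1}[r_t^2]\le-\frac{\alpha-\gamma}2\E_{t-1}[\bar r_t^2]$ requires. The paper's coordinatewise clipping of $\hat\g_t$ does not guarantee this when $N\ge2$, since clipping one coordinate can destroy a cancellation between coordinates.
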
 
We provide a detailed version of Theorem~\ref{theo:adapt_clipping_param_free} and a proof in Appendix~\ref{appendix:proof_theo_param-free_high_prob_adapt_clipping}.
To give some intuition, an adaptive expert algorithm of type Assumption~\ref{assump:second_order_algo} is key to derive this result: under Assumption~\ref{assump:subexp}, the maximum magnitude expert gradient $\sup_t \|\hat \nabla_t\|_\infty$ is a random variable that scales logarithmically with $T$ in high probability, and the variance (second order) term is managed with Assumption~\ref{assump:exp-concave}, enabling the recovery of high-probability regret bounds in Theorem~\ref{theo:adapt_clipping_param_free}, of type $\tilde O(\|\c\|_1(G+\sigma)\sqrt{T})$ %(up to $\log(T)^2$ terms) 
without assuming bounded gradient nor the knowledge of the noise parameters $\sigma,\nu,\mu > 0$. Interestingly, in the setting of sub-exponential noise and losses satisfying the stochastic directional derivative assumption, our result answers a challenging question raised by \citet{zhang2022parameter} as to whether it is possible to achieve a regret bound of order $\tilde O(\|\c\|_1 (G+\sigma)\sqrt{T})$ while remaining adaptive to the noise amplitude. Importantly, we believe that our adaptive clipping procedure with unknown noise level  can be applied to a broader range of learning problems. In particular, this result will play a central role in the learning of nonparametric functions with noisy observations in Section~\ref{section:risk}, and enables the derivation of minimax adaptive nonparametric risk guarantees with high probability.

\section{High-Probability Regression over Besov Spaces via Online-to-Batch}
\label{section:risk}

We now turn to the main statistical contribution of the paper, where we leverage the online learning guarantees established in the previous sections to derive high-probability minimax risk bounds for nonparametric regression over Besov spaces.

\paragraph{Setting: statistical learning.}
We observe a sample of $T \ge 1$ i.i.d.\ pairs $(X_1,Y_1),\dots,(X_T,Y_T)$ drawn from an unknown distribution on $\mathcal X \times \mathbb R$, where $\mathcal X \subset \mathbb R^d$ is compact. The data follow the nonparametric regression model
\[
Y_t = f(X_t) + \varepsilon_t, \quad t = 1,\dots,T,
\]
where $f : \mathcal X \to \mathbb R$ is an unknown regression function and $(\varepsilon_t)_{t\ge1}$ is a sequence of zero-mean noise variables with unknown variance $\sigma^2 > 0$. Our goal is to construct an estimator $\bar f_T$ that minimizes, in the minimax sense, the excess prediction risk
\[
\E_{XY}\!\big[(\bar f_T(X) - Y)^2\big]
-
\E_{XY}\!\big[(f(X) - Y)^2\big],
\]
uniformly over $f$ belonging to a Besov space (defined below). The estimator $\bar f_T$ is obtained via an online-to-batch conversion and takes the form
\[
% \textstyle
\bar f_T \;=\; \frac{1}{T}\sum_{t=1}^T \hat f_t,
\]
where each $\hat f_t$ is a wavelet-based estimator updated sequentially. In particular, each predictor $\hat f_t$ is parameterized by a collection of wavelet coefficients, which are learned online using the algorithms developed in Section~\ref{section:learning_with_known_clipping} and Section~\ref{section:adaptive_clipping}. At each round $t$, the coefficients of the predictor $\hat f_t$ are updated based on the stochastic gradient $\hat g_t \;=\; \ell_t'(\hat f_t) = 2\big(\hat f_t(X_t) - Y_t\big)$,
associated with the squared loss
$\ell_t(\hat f_t) = (\hat f_t(X_t) - Y_t)^2$.
This construction mirrors the stochastic online convex optimization framework analyzed in Section~\ref{section:parameter-free-high-prob}, as the goal is to control, for any comparator function $f$, the stochastic regret $R_T(f) = \sum_{t=1}^T
\E_{t-1}[\ell_t(\hat f_t)]-
\E_{t-1}[\ell_t(f)]$ that appears in \eqref{eq:regret_sto}.

Before describing our adaptive online wavelet regression procedure and deriving high-probability, noise-level–adaptive minimax rates matching recent optimal results \cite{devore2025optimal}, we briefly recall the necessary background on wavelet bases and Besov space approximation.

\subsection{Besov functions and wavelets.}
In the following, we provide a minimal introduction to wavelet approximation theory and Besov functions; see Appendix~\ref{appendix:wavelet} for additional details. We refer the interested reader to \cite{cohen2003numerical,gine2021mathematical,hardle2012wavelets} for a deeper introduction and comprehensive treatments of wavelets and Besov spaces.

\paragraph{Wavelet function representation.}

We consider compactly supported functions $f : \mathcal X \to \mathbb R$,  $ \mathcal X \subset \mathbb R^d$, $d\ge 1$. To design our algorithm, we rely on a multiscale representation of $f$ based on an orthonormal wavelet basis $\{\phi_{j_0,k}, \psi_{j,k}\}$. For a chosen starting scale $j_0 \in \mathbb{N}$, $f$ admits the decomposition
\begin{equation}
\label{eq:wavelet_decomposition}
% \textstyle
f
=
\sum_{k \in \bar \Lambda_{j_0}} \alpha_{j_0,k} \, \phi_{j_0,k}
+
\sum_{j = j_0}^\infty \sum_{k \in \Lambda_j} \beta_{j,k} \, \psi_{j,k}.
\end{equation}
Here, $\alphabold_{j_0} = (\alpha_{j_0,k})_{k \in \bar \Lambda_{j_0}}$ denotes the collection of \emph{scaling coefficients}, with
$|\bar \Lambda_{j_0}| \le \lambda 2^{j_0 d}$, and for each $j \ge j_0$,
$\betabold_j = (\beta_{j,k})_{k \in \Lambda_j}$ denotes the collection of
\emph{wavelet} (or \emph{detail}) coefficients at scale $j$, with
$|\Lambda_j| \le \lambda 2^{j d}$. When working with $S$-regular wavelet bases (with $S$ sufficiently large, see Definition~\ref{def:regular_wavelet} in Appendix~\ref{appendix:wavelet}), the expansion in \eqref{eq:wavelet_decomposition} converges not only in $L^2(\X)$ but also in $L^p(\X)$ for $p \geq 1$ (or the space of uniformly continuous functions) depending on whether $f \in L^p(\X), p\geq 1$. This broader convergence behavior is a key reason for adopting such regular bases.

\paragraph{Besov functions.}
Besov spaces $\B_{pq}^s$ constitute a classical family of function spaces indexed by three parameters: a smoothness parameter $s > 0$, an integrability parameter $p \in [1, \infty]$, 
and a summability parameter $q \in [1, \infty]$. This space can be intuitively viewed as the space of functions with $s > 0$ derivatives in $L^p(\X)$, with $p \ge 1$, and parameter $q \ge 1 $ allows for additional finer control of the regularity of the underlying functions. Besov spaces interpolate between Sobolev and Hölder spaces and are designed to capture both smooth and non-smooth behaviors in functions. There exist several equivalent definitions of Besov spaces (e.g. using differences, or interpolation theory): we refer to \cite{gine2021mathematical,hardle2012wavelets,triebel2006theory} for detailed and general background on Besov spaces. 
In this work, we adopt the wavelet characterization, which is particularly well suited for the analysis of our wavelet-based Algorithm~\ref{alg:online_adaptive_wavelet}.

Let $s > 0$ and let $\{\phi_{j_0,k}, \psi_{j,k}\}$ be an orthonormal $S$-regular wavelet basis with $S > s$ (see Definition~\ref{def:regular_wavelet}). 
We introduce the set of $L^\infty$-bounded Besov functions of radius $B >0$, smoothness $s > \frac{d}{p}$ and regularity parameters $1 \le p,q \le \infty$
\begin{equation}
    \label{eq:besov_norm}
\B^s_{pq}(B) = \bigg\{f \in L^\infty(\X) : \|f\|_\B := \|\alphabold_{j_0}\|_p + \Big(\textstyle \sum_{j\ge j_0}2^{-jpq(s-\frac{d}{p}+\frac{d}{2})}\|\betabold_j\|_p^q\Big)^{\frac{1}{q}} \le B, s > \frac{d}{p}\bigg\}.
\end{equation}

\paragraph{Approximation results with wavelets.}
Approximation properties of wavelet expansions are by now classical and well understood; see, e.g., \cite{devore1993constructive,cohen2003numerical,gine2021mathematical} for reviews. In particular, for functions $f$ belonging to smoothness spaces such as Besov spaces $\B^s_{pq} $ with $ s > \tfrac{d}{p} $ (so that $f \in L^\infty(\X)$), one can construct an approximation $\hat f$ using \emph{nonlinear methods} --- see \cite{devore1998nonlinear} for an introduction on nonlinear approximation theory. For instance, the so-called best $N$-term approximant $\hat f$ in a $S$-regular wavelet basis ($S>s$, see Definition \ref{def:regular_wavelet}) achieves the bound $\|f - \hat f\|_\infty \lesssim N^{-s/d}$,
where the hidden factor depends on the wavelet basis and the norm of the target function $f$. The precise construction of such $\hat f$ and justification of this rate are provided in the proof of Theorem~\ref{theo:non_param_risk} \citep[and also appears, e.g., in Proof of Theorem~1 in][]{liautaud2025minimax}.

\subsection{Algorithm: Adaptive Online Wavelet Regression}

In the adversarial setting, \citet[][Theorem~1]{liautaud2025minimax} showed that using a single wavelet predictor at a fixed uniform resolution is suboptimal, yielding a regret of order $O(\sqrt{T})$ in the high-regularity regime $s \ge \tfrac{d}{2}$. This phenomenon also appears in the batch setting, where the optimal rate is $O(T^{-\nicefrac{2s}{(2s+d)}}) < O(\tfrac{1}{\sqrt{T}})$. A workaround is to rely on a multiscale aggregation strategy that averages several wavelet predictors associated with different resolutions. Intuitively, this approach aims at selecting the resolution that best balances approximation and variance terms. As shown in Theorem~\ref{theo:non_param_risk}, such a strategy with Algorithm~\ref{alg:param_free_unbounded_gradients} achieves minimax optimal rates $O(T^{-\nicefrac{2s}{(2s+d)}})$ in the batch setting, where the losses are random through the noise $\varepsilon_t$ in the data. Remarkably, we show that our procedure also adapts to the noise level and attains an optimal dependence on the noise in the risk bound of Theorem~\ref{theo:non_param_risk}, reaching the recent noise-level aware bound \citep{devore2025optimal}, for any unknown variance $\sigma$. Our procedure builds directly on ideas of \citet{liautaud2025minimax} and we summarize the resulting algorithm in Algorithm~\ref{alg:online_adaptive_wavelet}.

\paragraph{Procedure.} Let $J_0 \ge 0$ be a maximum starting scale. At each time $t \ge 1$, we predict 
\begin{equation}
    \label{eq:predictor}
%   \textstyle 
   \hat f_t(X_t) = \sum_{e \in \mathcal E} w_{e,t} \hat f_{e,t}(X_t) \qquad \text{with } w_{e,t} \ge 0\, , \; \sum_{e\in \mathcal E} w_{e,t} = 1,
\end{equation}
where each $\hat f_{e,t}$ is a truncated wavelet estimator of type \eqref{eq:wavelet_decomposition} at some maximum level $J \ge 0$, whose coefficients are learnt with Algorithm~\ref{alg:param_free_unbounded_gradients} and is associated to a triplet $e = (j_0,\alphabold_{j_0,1},\Delta) \in [J_0]\times \mathcal A_{j_0} \times \calD =: \mathcal E$: $j_0$ is the starting scale of the wavelet expansion; $\alphabold_{j_0,1}$ are the scaling coefficients at initial time $t=1$ at level $j_0$ and set on a discretization grid $\mathcal A_{j_0} \subset  [-2^{-j_0\nicefrac{d}{2}} \, \|f\|_\infty \, \|\phi\|_1 ; 2^{-j_0\nicefrac{d}{2}} \, \|f\|_\infty \, \|\phi\|_1]$ of precision $\epsilon_{j_0} = O(2^{-j_0\nicefrac{d}{2}}/\sqrt{T})$ and of constant size $|\mathcal A_{j_0}| = O(\sqrt{T})$ for all $j_0 \in [J_0]$; and $\Delta$ is some margin clipping belonging to $\calD$ defined in \eqref{eq:grid_margin_clipping}. The total number of experts is given by
\[ 
% \textstyle
|\mathcal E|
=
(J_0+1)\,|\mathcal D|
\prod_{j_0\in[J_0]} |\mathcal A_{j_0}|
=
O\Big((J_0+1)\,T^{1+\frac{J_0}{2}}\Big).
\]
%where $J_0 = O(\log T)$.
These experts are aggregated according to \eqref{eq:predictor} using a subroutine that satisfies Assumption~\ref{assump:second_order_algo}. The complete procedure is summarized in Algorithm~\ref{alg:online_adaptive_wavelet}.

\begin{algorithm}[htbp]
\caption{Adaptive Online Wavelet Regression}\label{alg:online_adaptive_wavelet}
\SetKwInOut{Input}{Input}
\SetKwInOut{Initialize}{Initialize}
\SetKwInOut{Output}{Output}
\Input{Bound on expected derivative $\E_{t-1}[\ell'_t(\hat f_t(X_t))] \le G, t\ge 1$, bound on Besov norm $B > 0$; set of wavelet experts $(\hat f_{e})_{e\in \mathcal E}$\;}
\Initialize{Diameters $(C_{j,k}) = (B2^{-jd/2})$; uniform weights $\tilde \w_1 = (\tilde w_{e,1})_{e \in \mathcal E}$; prediction functions $(\hat{f}_{e,1})_{e \in \mathcal E}$ with initial scaling coefficients set on the grids $(\mathcal A_{j_0})_{0 \le j_0 \le J_0}$ and detail coefficients at zero\;}
\For{$t = 1$ \KwTo $T$}{
    Receive $X_t$ and predict $\hat{f}_t(X_t) = \sum_{e \in \mathcal E} w_{e,t}\hat{f}_{e,t}(x_t)$ \label{alg:line_prediction}\;
    Reveal gradient $\hat \nabla_t = \nabla_{\w_t} \ell_t\big(\sum_{e \in \mathcal E} w_{e,t} \hat{f}_{e,t}(x_t)\big)$\;
    Update $\w_{t+1} \gets \texttt{weight}(\w_t, \hat \nabla_t)$ with \texttt{weight} satisfying Assumption~\ref{assump:second_order_algo} \label{alg:line_weight}\;
    \For{$e \in \mathcal E$}{
        Reveal gradient $\hat \g_{e,t} =(g_{j,k,t})$ of coefficients in $\hat f_{e,t}$ as in \eqref{eq:gradient} \;
        Update coefficients $(c_{j,k,t})$ of $\hat f_{e,t}$ using Algorithm~\ref{alg:param_free_unbounded_gradients} with input $\hat \g_{e,t}$, expected gradient bounds $(G_{j,k,t}) = G \cdot (|\varphi_{j,k}(X_t)|)$, diameters $(C_{j,k})$ and margin clipping $\Delta_e$ \label{line:optim_param_free}\;
    }
}
\Output{$(\hat f_t)_{1\le t \le T}$ %\hat{f}_{T+1} = \sum_{e\in \mathcal E} w_{e,T+1} [\hat{f}_{e,T+1}]_B$
}
\end{algorithm}

\paragraph{Boundedness of predictors: choice of the diameters $(C_{j,k})$ and grids $(\mathcal A_{j_0})$.}
A key ingredient of our analysis is that all predictors remain uniformly bounded at every time step and we explain how this boundedness is enforced through an appropriate choice of coefficient constraints. For the coarse (scaling) coefficients, one has for all $k \in \bar\Lambda_{j_0}, |\alpha_{j_0,k}| \le 
2^{-j_0\frac{d}{2}}\|\phi\|_1 \|f\|_\infty$,
which allows us to construct the corresponding finite grids $\mathcal A_{j_0}$.
For the wavelet coefficients, assuming a bound $B>0$ on the Besov norm $\|f\|_\B$, the definition of the Besov norm~\eqref{eq:besov_norm} yields, for all $j,k$, since $s > \frac{d}{p}$
\[
|\beta_{j,k}| \le B\,2^{-j(s-\frac{d}{p}+\frac{d}{2})} \le B\,2^{-j\frac{d}{2}} =: C_{j,k}.
\]
%\vspace{-0.3cm}
Constraining the wavelet coefficients to lie in $[-C_{j,k},C_{j,k}]$ at all times ensures that the associated predictors $\hat f_{e,t}$ remain uniformly bounded in sup-norm, for all experts $e$ and all $t$, as shown in Lemma~\ref{lemma:predictor_bound}. This uniform-in-time boundedness is essential for controlling stochastic fluctuations via concentration inequalities in the high-probability regret analysis (see, e.g., the analysis in Section~\ref{section:parameter-free-high-prob}). Finally, we emphasize that the choice of the diameters $(C_{j,k})$ does not require prior knowledge of the smoothness parameter $s$ nor of the integrability parameters $(p,q)$. This adaptivity is achieved at the cost of an additional $O(\log T)$ factor in the bound. % compared to the oracle bound that would be available if $s$ and $p$ were known.

\paragraph{Computation of gradients and computational complexity.}
Denoting by $\partial_c$ the partial derivative with respect to $c$, the gradients with respect to the wavelet coefficients are obtained by the chain rule
% \vspace{-0.2cm}
\begin{equation}
% \vspace{-0.3cm}
\label{eq:gradient}
\textstyle
\hat g_{j,k,t}
=
\left[
\partial_{c}
\,\ell_t\!\left(
\hat f_t(X_t) - c_{j,k,t}\varphi_{j,k}(X_t) + c\,\varphi_{j,k}(X_t)
\right)
\right]_{c=c_{j,k,t}}
=
\ell_t'(\hat f_t(X_t))\,\varphi_{j,k}(X_t),
\end{equation}
where
$\ell_t'(\hat f_t(X_t)) = 2(\hat f_t(X_t) - Y_t)$
is the derivative of the squared loss
$\ell_t(\hat Y) = (\hat Y - Y_t)^2.$
In particular, for every $t \ge 1$, $
\E_{t-1}[\ell_t'(\hat f_t(X_t))]
=2\E_{t-1}[\hat f_t(X_t) - f(X_t) - \varepsilon_t]
\le 2(\|\hat f_t\|_\infty + \|f\|_\infty)$,
where we used $\E_{t-1}[\varepsilon_t]=0$ and the Lipschitz parameter required by Algorithm~\ref{alg:param_free_unbounded_gradients} can be set to $G = 2(\|\hat f_t\|_\infty + \|f\|_\infty)$.
Although the number of wavelet coefficients is potentially large (of order $O(2^{jd})$ at level $j$), the optimization procedure is sparse. At each time $t$, the gradient in~\eqref{eq:gradient} vanishes whenever $\varphi_{j,k}(X_t)=0$. Since wavelet basis functions are compactly supported, only a small number of coefficients are active at any given location $X_t$, and the sparsity pattern—being entirely determined by $X_t$—is known deterministically. As a result, updates and clipping are performed only on active coordinates, ensuring computational efficiency; see \cite{liautaud2025minimax} for detailed discussion on the complexity.

\paragraph{Third result: optimal noise level-aware risk bound in high probability.}

We now state our main nonparametric result, establishing a high-probability nearly minimax-optimal risk bound that is adaptive to the unknown noise level.

\begin{theorem}
\label{theo:non_param_risk}
Let $T \ge 1, B > 0, \delta \in (0,1)$, and assume $s - \tfrac{d}{p} \ge \kappa > 0$ and $\sigma \ge \sigma_0 > 0$. Let $(\hat f_e)_{e\in \mathcal E}$ be a collection of $S$-regular wavelet predictors with $S > s$ and maximum scale $J = \tfrac{S}{(2S+d)\kappa}\log(TB^2\sigma_0^{-2})$.
After $T$ rounds, Algorithm~\ref{alg:online_adaptive_wavelet} run with bounds $B$ and $G = 2(\hat B_\infty + B_\infty)$ as defined in Lemma~\ref{lemma:predictor_bound}, produces a sequence of predictors $(\hat f_t)_{1\le t \le T}$. Setting $\bar f_T = \tfrac{1}{T}\sumT \hat f_t$ we have that for any $f \in \B^s_{pq}(B)$, the following guarantee holds with probability $1-4\delta$
% \vspace{-0.1cm}
\[
% \vspace{-0.2cm}
   \|\bar f_T - f\|_2^2 \leq O\bigg(\|f\|_{\mathcal{B}}^{\frac{2d}{2s+d}} (\sigma^2)^{\frac{2s}{2s+d}} T^{-\frac{2s}{2s + d}} + B^2\frac{\log^2 T}{T}\log(\delta^{-1}) \bigg) \, .
   %B \log T \log(|\mathcal E|) + B \log^2 T \log(\delta^{-1})\bigg).
% \vspace{-0.3cm}
\]
\end{theorem}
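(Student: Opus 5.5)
We prove the bound through the online-to-batch inequality \eqref{eq:regret_sto}. For the squared loss it gives $\|\bar f_T - f\|_2^2 \le \frac1T R_T(f)$, where $R_T(f)=\sum_t \E_{t-1}[\ell_t(\hat f_t)]-\E_{t-1}[\ell_t(f)]$. The task is therefore to bound the stochastic regret against the true $f$, with high probability, by
\[
O\Big(\|f\|_\B^{\frac{2d}{2s+d}}\sigma^{\frac{4s}{2s+d}}T^{\frac{d}{2s+d}} + B^2\log^2T\log(\delta^{-1})\Big).
\]

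\textbf{Step 1: split the regret.} For a fixed expert $e^*$ and a truncated competitor $f_{e^*}$ in the span of $\hat f_{e^*}$, we write
\[
R_T(f)=\underbrace{\textstyle\sum_t \E_{t-1}[\ell_t(\hat f_t)-\ell_t(\hat f_{e^*,t})]}_{\text{aggregation}}
+\underbrace{\textstyle\sum_t \E_{t-1}[\ell_t(\hat f_{e^*,t})-\ell_t(f_{e^*})]}_{\text{expert regret}}
+\underbrace{\textstyle\sum_t \E_{t-1}[\ell_t(f_{e^*})-\ell_t(f)]}_{\text{approximation}}.
\]
The approximation term equals $T\|f_{e^*}-f\|_2^2$, because cross terms with $\varepsilon_t$ vanish in conditional expectation.

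\textbf{Step 2: the aggregation term.} Linearizing in $\w$ and using Assumption~\ref{assump:second_order_algo} gives
\[
\textstyle\sum_t \hat\nabla_t^\top\w_t-\hat\nabla_{e^*,t}\le \Xi_3\sqrt{\log|\mathcal E|\sum_t(\cdot)^2}+\Xi_4\log|\mathcal E|\max_t\|\hat\nabla_t\|_\infty .
\]
Here $\log|\mathcal E|=O(J_0\log T)=O(\log^2T)$, and $\max_t\|\hat\nabla_t\|_\infty\lesssim B+(\nu\vee\mu)\log(T/\delta)$ with high probability, by Assumption~\ref{assump:subexp} and Lemma~\ref{lemma:predictor_bound}. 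The squared loss satisfies Assumption~\ref{assump:exp-concave} with $\alpha\asymp 1/(B+\sigma)^2$ on the bounded predictor range: Taylor's formula gives $\E_{t-1}[\ell_t(a)-\ell_t(b)]=\E_{t-1}[\ell'_t(a)](a-b)-(a-b)^2$, and $(a-b)^2$ dominates $\frac{\alpha}{2}\E_{t-1}[(\ell'_t(a)(a-b))^2]$ since $\E_{t-1}[\ell_t'^2]\lesssim B^2+\sigma^2$. The negative quadratic term then absorbs the second-order term, after a Freedman/self-normalized martingale inequality turns the observed sum of linearized terms into its conditional expectation. This is the argument behind Theorem~\ref{theo:adapt_clipping_param_free}, and it leaves $O((B^2+\sigma^2)\log^2T\log\delta^{-1})$. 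The $\sigma^2$ part is lower order and can be merged with the first term, using $\sigma\ge\sigma_0$ and the choice of $J$.

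\textbf{Step 3: the expert regret.} We apply the general version Theorem~\ref{th:param-free_high_prob_general} with heterogeneous diameters $C_{j,k}=B2^{-jd/2}$. We take $e^*$ with clipping margin $\Delta\in\calD$ closest to $\Delta^*$, and with initial scaling coefficients on the grid within $\epsilon_{j_0}$ of those of $f$, so the coarse part contributes only $O(1)$ to $|c-c_1|$. The comparator $f_{e^*}$ is the best $N$-term-type truncation: keep all detail coefficients up to a level $j_1$, plus the large coefficients at levels $j_1<j\le J$. The $\ell_1$-adaptive regret is then
\[
\textstyle\sum_{j,k}|\beta_{j,k}|\,\sqrt{\sum_t g_{j,k,t}^2}.
\]
Since $\E_{t-1}[g_{j,k,t}^2]\lesssim (B^2+\sigma^2)\E[\psi_{j,k}(X)^2]$, each active coordinate contributes $|\beta_{j,k}|(B+\sigma)\sqrt{T}$, up to concentration, instead of a worst case.

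\textbf{Step 4: the bias–variance trade-off.} The key estimate is
\[
\sum_{j\le j_1}\sum_k|\beta_{j,k}|\le \sum_j 2^{jd(1-1/p)}\|\betabold_j\|_p\lesssim B\,2^{j_1 d/2},
\]
so the variance part is $\lesssim \sigma\sqrt{T}B2^{j_1d/2}$. The bias is $\lesssim T B^2 2^{-2j_1 s}$ for the linear part, with the nonlinear tail handled as in \citet{liautaud2025minimax} when $p<2$; the truncation at $J$ is harmless since $s-d/p\ge\kappa$. A cruder form of the balance is $\sigma B\sqrt{T}2^{j_1d/2}\approx TB^2 2^{-2sj_1}$. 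To obtain the sharp noise-level-aware rate, we instead use the self-bounding trick: the regret term $\sqrt{\sum_t(\cdot)^2}$ is bounded by $\lambda T\|\hat f-f_{e^*}\|^2+\frac{1}{\lambda}(\cdots)$, where the second-order term comes from Assumption~\ref{assump:exp-concave}, yielding a per-coefficient cost $\sigma^2$ in place of $\sigma B\sqrt T|\beta|$. This gives the variance $\sigma^2 2^{j_1 d}$ against bias $TB^22^{-2sj_1}$, and optimizing $2^{j_1}\asymp (TB^2/\sigma^2)^{1/(2s+d)}$ yields $T\cdot B^{\frac{2d}{2s+d}}\sigma^{\frac{4s}{2s+d}}T^{-\frac{2s}{2s+d}}$. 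The adaptivity to $s,p,q$ holds because $j_1$ and $e^*$ appear only in the analysis.

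\textbf{Main obstacle.} The hardest step is Step 4's sharp version. A direct $\ell_1$ bound gives a $\sigma B\sqrt T 2^{j_1d/2}$ variance, which leads to a suboptimal rate. The first thing we would try is to keep the negative quadratic term $-\E_{t-1}[(\hat f_t-f)^2(X_t)]$ from the squared loss through every stage, so that each $\sqrt{\cdot}$ term can be absorbed by AM-GM, leaving costs of order $\sigma^2\times$ the number of effective coefficients times logs. Dividing by $T$ and collecting the $B^2\log^2T\log\delta^{-1}$ residuals, with a union bound over the events, gives the stated probability $1-4\delta$.
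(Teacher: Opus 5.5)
Your Steps 1--3 follow the paper's decomposition: $R_T(f)=R_1+R_2+T\|\hat f^*-f\|_2^2$, aggregation via Assumption~\ref{assump:second_order_algo} with second-order concentration, and Theorem~\ref{th:param-free_high_prob_general} applied coordinatewise. Step 4, which carries the whole rate, has a genuine gap.

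First, you reject the direct $\ell_1$ route only because your key estimate is too crude. The Besov constraint $\|\betabold_j\|_p\le\|f\|_\B2^{-j(s-d/p+d/2)}$ gives $\sum_k|\beta_{j,k}|\le2^{jd(1-1/p)}\|\betabold_j\|_p\le\|f\|_\B2^{j(d/2-s)}$, not merely $B2^{jd/2}$; your bound throws away the smoothness factor $2^{-js}$. With the correct estimate and $s<d/2$, balancing the variance $\sigma\sqrt T\|f\|_\B2^{j_1(d/2-s)}$ against $T\|f\|_\B^22^{-2sj_1}$ already gives $2^{j_1}\asymp(T\|f\|_\B^2/\sigma^2)^{1/(2s+d)}$ and the minimax rate. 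For $s>d/2$ (or $p<2$) the level sum is dominated by its \emph{coarsest} level. So the details must start at a large scale $j_0^*\asymp\frac1{2s+d}\log_2T$, with the scaling coefficients at that level handled by the grid $\mathcal A_{j_0^*}$ at $O(1)$ cost. Your Step 4 sums from $j=0$ and never uses $j_0^*$, so in that regime it leaves a $\sigma B\sqrt T$ variance.

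Second, the substitute you propose, a per-coefficient cost $\sigma^2$, is not available. The subroutines of Assumption~\ref{assump:parameter-free-algo} only give bounds linear in $|c-c_1|$. The only negative term, $-\frac\alpha2\E_{t-1}[(\nabla\ell_t(\c_t)^\top(\c_t-\c))^2]$ from Assumption~\ref{assump:exp-concave}, is a single scalar. It is already spent in Theorem~\ref{th:param-free_high_prob_general} to cancel the martingale's second-order term. There is no per-coordinate quadratic against which to apply AM--GM to $|c_n|\sigma\sqrt T$. Even an ONS-type argument would cost $1/\alpha\asymp B^2+\sigma^2$ per coefficient, which is not noise-level-aware.

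Third, you are missing the idea that makes $\sigma$, rather than $B+\sigma$, appear. Your Step~3 bound $\E_{t-1}[g_{j,k,t}^2]\lesssim(B^2+\sigma^2)\E[\psi_{j,k}(X)^2]$ leads to a rate in $(B+\sigma)^{4s/(2s+d)}$. The paper instead uses the identity
$\E_{t-1}[\hat g_t^2]=4\big(\E_{t-1}[(\hat f_t-f)^2(X_t)]+\sigma^2\big)=4\big(L_t(\hat f_t)-L_t(f)+\sigma^2\big)$,
so that $\sqrt{\sumT\E_{t-1}[\hat g_t^2]}\le2\sqrt{R_T(f)}+2\sigma\sqrt T$. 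It then sums over $k$ by Cauchy--Schwarz with the pointwise bound $\sum_k\psi_{j,k}(x)^2\le2^{jd}M_\psi\|\psi\|_\infty$. This also avoids your implicit bounded-density assumption $\E[\psi_{j,k}(X)^2]\lesssim1$. The result is
\[
R_T(f)\le A\sqrt{R_T(f)}+A\sigma\sqrt T+T\|\hat f^*-f\|_2^2+R_1+D,\qquad A\asymp\|f\|_\B\textstyle\sum_{j=j_0^*}^{j_0^*+J^*}2^{-j(s-\frac dp-(\frac d2-\frac dp)_+)},
\]
and Young's inequality gives $R_T(f)\le A^2+2A\sigma\sqrt T+2T\|\hat f^*-f\|_2^2+2(R_1+D)$.

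So the self-bounding is applied to $\sqrt{R_T(f)}$ itself, not used to produce a per-coefficient $\sigma^2$. Optimizing over $j_0^*+J^*$, and over $j_0^*$ when the exponent is positive, then yields the stated rate.
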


We postpone the proof of Theorem~\ref{theo:non_param_risk} to Appendix~\ref{appendix:proof_non_param}. Remarkably, beyond its adaptivity to the unknown noise level, our algorithm achieves risk guarantees that match the minimax-optimal noise-level–aware rates recently derived in the batch setting by \citet{devore2025optimal}, who provide an explicit algorithm under knowledge of $(s,p,q,\sigma)$. Remark also that the main term scales with the true Besov norm of the function $\|f\|_\B$. Finally, the bound matches the optimal rate $\tilde O(T^{-\nicefrac{2s}{(2s+d)}})$, up to an additional $O(T^{-1}\log^2 T)$ term that reflects the cost of adaptivity induced by the aggregation procedure.
%An interesting parallel can be drawn with classical wavelet shrinkage methods: while in the batch setting, the optimal resolution level, of order $\frac{1}{2s+d}\log_2 T$, is selected implicitly through thresholding rules, our approach relies on expert aggregation to adaptively select the appropriate scale in an online fashion.
% \paragraph{Comparison with rates in online nonparametric regressin and failure of usual online-to-batch.}
Interestingly, the rates obtained in Theorem~\ref{theo:non_param_risk} can be compared with known bounds in online nonparametric regression. For the squared loss, \citet{rakhlin2014online} established minimax-optimal regret rates in the adversarial setting. When $s \ge \tfrac{d}{2}$, the average regret satisfies $\tfrac{R_T}{T} = O(T^{-\nicefrac{2s}{(2s+d)}})$, matching the i.i.d.\ minimax risk rate. However, when $s < \tfrac{d}{2}$, the regret deteriorates to $\tfrac{R_T}{T} = O(T^{-\nicefrac{s}{d}})$, which is strictly slower than the optimal batch rate $O(T^{-\nicefrac{2s}{(2s+d)}})$; see also \citet{rakhlin2017empirical} for a discussion of the gap between minimax risk and minimax regret.
This distinction highlights a key difficulty in online-to-batch conversion: even when resorting to an optimal algorithm for the online setting, such as that of \citet{liautaud2025minimax}, deriving optimal risk bounds (also in expectation) via a classical online-to-batch argument is nontrivial in the low-regularity regime $s < \tfrac{d}{2}$, where the minimax i.i.d. rate remains $O(T^{-\nicefrac{2s}{2s+d}})$. Achieving optimal high-probability risk guarantees therefore requires a careful analysis that goes beyond standard online-to-batch techniques.

\section{Conclusion}
We proposed a sequential wavelet-based algorithm that achieves minimax-optimal integrated squared error bounds with high probability, while adapting to the unknown noise level through a refined online-to-batch conversion. Our analysis also yields new high-probability, comparator-adaptive regret guarantees with $\ell_1$-norm dependence.
Several directions remain open. In particular, it would be of interest to extend our results to $L^{p'}$ risks for $p' \ge 1$ and to investigate whether the proposed approach recovers optimal noiseless recovery rates when $\sigma = 0$, as characterized in the recent work of \cite{devore2025optimal}. 

% In this work we show that our sequential wavelet-based algorithm achieves, in high probabity, minimax optimal guarentees for the integrated squared error with noise adaptivity via a refined online-to-batch conversion. We also derived new  regret bound scaling with the $\ell_1$ norm of the competitor analyzing guarentees of online algorithms in a batch regression setting with square loss. 
% Future work: extend to $L^{p'}, p' \ge 1$ and see if our algorithm matches the known hybrid rates. Also, it would be interesting to see if noiseless observation $\sigma=0$ are provided if our algorithm recovers optimal recovery minimax rates analyzed in \cite{devore2025optimal}. We left these two interesting directions for future work.

% Acknowledgments---Will not appear in anonymized version
% \acks{We thank a bunch of people and funding agency.}

% \newpage
\bibliographystyle{plainnat}
\bibliography{biblio}

\appendix

% \crefalias{section}{appendix} % uncomment if you are using cleveref

\newpage 
\input{appendix.tex}

\end{document}

%% file: mypreambule.tex
\usepackage{tikz}
\usepackage{pgfplots}
\usepackage{pgfplotstable}
\pgfplotsset{compat=1.7}
\usepgfplotslibrary{groupplots}

\usepackage[utf8]{inputenc} % allow utf-8 input
\usepackage[T1]{fontenc}    % use 8-bit T1 fonts
\usepackage{hyperref}       % hyperlinks
\usepackage{url}            % simple URL typesetting
\hypersetup{colorlinks,
            linkcolor=blue,
            citecolor=blue,
            urlcolor=magenta,
            linktocpage,
            plainpages=false}
\usepackage{booktabs}       % professional-quality tables
\usepackage{amsfonts}       % blackboard math symbols
\usepackage{amsmath} 
\usepackage{pifont} 
\usepackage{dsfont}
\usepackage{mathrsfs}
\usepackage{nicefrac}       % compact symbols for 1/2, etc.
\usepackage{microtype}      % microtypography
\usepackage{xcolor}         % colors
\usepackage{amssymb}

\usepackage{comment}

\usepackage{bm}
\usepackage[justification=justified]{caption}
\usepackage{subcaption} %several images
\usepackage{graphicx}
\usepackage{tikz}
\usepackage{float}
\usepackage{enumitem}
\usepackage{wrapfig}
\usepackage{array}

\usepackage{multirow}

\newcommand{\X}{\mathcal{X}} %instance space
\newcommand{\ind}[1]{\mathds{1}_{ #1 }}

\newcommand{\sumT}{\sum_{t=1}^T} %sum in t
\newcommand{\F}{\mathcal{F}} %function space
\newcommand{\B}{\mathcal{B}}
\newcommand{\bG}{\mathbf{G}}
\newcommand{\calD}{\mathcal{D}}
\newcommand{\w}{\mathbf{w}}
\newcommand{\g}{\mathbf{g}}
\renewcommand{\c}{\mathbf{c}}

\DeclareMathOperator{\clip}{clip}
\newcommand{\argmin}{\operatornamewithlimits{arg \, min}}

\newcommand{\R}{\mathbb{R}}

\newcommand{\sumN}{\sum_{n=1}^N}

\newcommand{\alphabold}{\boldsymbol{\alpha}}
\newcommand{\betabold}{\boldsymbol{\beta}}

\newcommand{\bxi}{\boldsymbol{\xi}}

\renewcommand{\le}{\leqslant}
\renewcommand{\leq}{\leqslant}

\renewcommand{\ge}{\geqslant}
\renewcommand{\geq}{\geqslant}

\newcommand{\E}{\mathbb{E}} %tree

\usepackage{tikz}
\usetikzlibrary{trees}
\usepackage{forest}

%% file: appendix.tex
\begin{center} 
\LARGE \textsc{Appendix}
\end{center}

\tableofcontents

\newpage 
\section{Proof of Theorem~\ref{th:param-free_high_prob}}
\label{appendix:proof_theo_param-free_high_prob}

\begin{theorem}[High-probability parameter-free regret]
\label{th:param-free_high_prob_general}
Let $N\in\mathbb N^*, C_1,\dots,C_N>0$ and $\delta\in(0,1)$.
Assume we are given noisy gradients
$\hat \g_1,\dots,\hat \g_T \in \mathbb R^N$ such that, for all $t\ge1$,
the gradient noise satisfies Assumption~\ref{assump:subexp} with parameters $\nu,\mu > 0$, and
\[
|\E_{t-1}[\hat g_{n,t}]| \le G_{n,t}, \, n \in [N]
\quad \text{and} \quad
\max_{1 \leq n \leq N} \E_{t-1}\!\left[|\hat g_{n,t}-\E_{t-1}[\hat g_{n,t}]|^2\right]
\le \sigma^2.
\]
Then Algorithm~\ref{alg:param_free_unbounded_gradients}, run with diameters $(C_n)$, gradients bounds $(G_{n,t})$ and clipping margin
\[
\Delta_t = \max\bigg\{\nu\sqrt{2\log\Big(\tfrac{(\sqrt{0.5\pi}+2\mu)t}{\log(\delta^{-1})}\Big)},2\mu\log\Big(\tfrac{(\sqrt{0.5\pi}+2\mu)t}{\log(\delta^{-1})}\Big)\bigg\}
\]
satisfies, with probability at least $1-2\delta$, for all comparators
$\c \in\mathbb R^N$ such that $|c_n|\leq C_n, n \in [N]$,
\begin{multline*}
R_T(\c)
\;\le\; O \bigg[\sum_{n=1}^N
|c_n|\Big(
\Xi_1\sqrt{\textstyle\sumT \E_{t-1}[|\hat g_{n,t}|^2]}
+
\big(\Xi_1\sqrt{\log(\delta^{-1})} + \Xi_2\big)\big(\sup_{1\leq t \leq T} G_{n,t} + \max(\nu,\mu)\log T\big)\Big) \\
+
\bigg(\frac{1}{\alpha} + \bigg(\sup_{1\leq t \leq T} \sum_{n=1}^NC_n(G_{n,t} + \max(\nu,\mu)\log T)\ind{G_{n,t}>0}\bigg)^2\bigg)\log(\delta^{-1})\bigg],
\end{multline*}
where the notation $O[\cdot]$ hides multiplicative constants and $\log \log(\delta^{-1})$ terms.
\end{theorem}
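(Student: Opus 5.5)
We decompose the stochastic regret using Assumption~\ref{assump:exp-concave} applied to $(\c_t,\c)$. With $\g_t:=\E_{t-1}[\hat\g_t]$,
\[
R_T(\c)\le \sumT \g_t^\top(\c_t-\c) - \frac{\alpha}{2}\sumT \E_{t-1}\big[(\hat\g_t^\top(\c_t-\c))^2\big].
\]
We then write $\g_t = \bar\g_t + (\E_{t-1}[\hat \g_t]-\E_{t-1}[\bar\g_t]) + (\E_{t-1}[\bar\g_t]-\bar\g_t)$, where $\bar\g_t$ is the clipped gradient. This splits the linear term into three pieces:
\begin{itemize}
\item[(i)] the linearized regret $\sum_t \bar\g_t^\top(\c_t-\c)$ on the clipped feedback;
\item[(ii)] a clipping bias $\sum_t (\E_{t-1}[\hat\g_t-\bar\g_t])^\top(\c_t-\c)$;
\item[(iii)] a martingale term $\sum_t (\E_{t-1}[\bar\g_t]-\bar\g_t)^\top(\c_t-\c)$.
\end{itemize}
Coordinates with $G_{n,t}=0$ contribute nothing, since the algorithm sets $\bar g_{n,t}=0$. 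This explains the indicator in the final bound.

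Term (i) is handled coordinatewise by Assumption~\ref{assump:parameter-free-algo}. Since $|\bar g_{n,t}|\le \bar G_{n,t}=G_{n,t}+\Delta_t$, it is at most $\sum_n |c_n-c_{n,1}|\big(\Xi_1\sqrt{\sum_t \bar g_{n,t}^2}+\Xi_2\sup_t \bar G_{n,t}\big)$. Taking $c_{n,1}=0$ gives the factor $|c_n|$, and $\Delta_T=O((\nu\vee\mu)\log T)$ up to $\log\log\delta^{-1}$ terms. We still need to replace $\sum_t \bar g_{n,t}^2$ by $\sum_t\E_{t-1}[\hat g_{n,t}^2]$. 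Since $\bar g_{n,t}^2\le \hat g_{n,t}^2$, it suffices to apply a Freedman/Bernstein inequality to the bounded nonnegative variables $\bar g^2_{n,t}\le \bar G_{n,t}^2$. This gives, with probability $1-\delta$,
\[
\sum_t \bar g_{n,t}^2 \le 2\sum_t \E_{t-1}[\bar g_{n,t}^2] + O(\bar G^2\log\delta^{-1}).
\]
Its square root produces the $\Xi_1\sqrt{\log\delta^{-1}}(G+(\nu\vee\mu)\log T)$ term. A union bound over $n$ would cost $\log N$; alternatively, one can apply the inequality to the weighted sum directly.

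Term (ii) is controlled using the sub-exponential tails. For each coordinate, $|\E_{t-1}[\hat g_{n,t}-\bar g_{n,t}]|\le \E_{t-1}[(|\xi_{n,t}|-\Delta_t)_+]$, because $|\E_{t-1}\hat g_{n,t}|\le G_{n,t}$. Integrating the tail of Assumption~\ref{assump:subexp} beyond $\Delta_t$ gives a bound of order $(\sqrt{\pi/2}\,\nu+2\mu)\exp(-\tfrac12\min(\Delta_t^2/\nu^2,\Delta_t/\mu))\le \log(\delta^{-1})/t$, which is exactly how $\Delta_t$ was calibrated. Multiplying by $|c_{n,t}-c_n|\le 2C_n$ and summing gives $O(\sum_n C_n\log(\delta^{-1})\log T)$. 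By AM-GM this is absorbed into $(\sum_n C_n(G_{n,t}+(\nu\vee\mu)\log T))^2\log\delta^{-1}$ plus lower-order terms.

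The main obstacle is term (iii). A plain Azuma bound gives $NC\sqrt T$, which would destroy the $\ell_1$ dependence. Instead, we use a self-normalized (Freedman-type) martingale inequality on $M_t=(\E_{t-1}[\bar\g_t]-\bar\g_t)^\top(\c_t-\c)$. Its increments are bounded by $b:=2\sup_t\sum_n C_n\bar G_{n,t}\ind{G_{n,t}>0}$. For any $\lambda\le 1/b$, with probability $1-\delta$,
\[
\sum_t M_t\le \lambda\sum_t\E_{t-1}[M_t^2]+\frac{\log\delta^{-1}}{\lambda}.
\]
We then need $\E_{t-1}[M_t^2]\le \E_{t-1}[(\bar\g_t^\top(\c_t-\c))^2]$. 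Relating this to $\E_{t-1}[(\hat\g_t^\top(\c_t-\c))^2]$ is the delicate point, because clipping is coordinatewise, not a contraction of the inner product. I would bound the discrepancy by $\E_{t-1}[(\sum_n|\hat g_{n,t}-\bar g_{n,t}||c_{n,t}-c_n|)^2]$, which again has tail mass $O(\log\delta^{-1}/t)$ times $(\sum_nC_n)^2\cdot$polylog and so sums to a lower-order term. Choosing $\lambda=\min(\alpha/2,1/b)$ cancels the quadratic term against the negative $\frac\alpha2$ term from Assumption~\ref{assump:exp-concave}. What remains is $(1/\alpha + b)\log\delta^{-1}$, and $b\le b^2+1$ fits the stated form. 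A union bound over the two or three events gives probability $1-2\delta$.
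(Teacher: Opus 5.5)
Your proposal is correct and follows the paper's proof step for step. It uses the same three-way split: a clipping bias, controlled by integrating the sub-exponential tail beyond $\Delta_t$ so that the calibration yields $\log(\delta^{-1})/t$ per round; a linearized regret on clipped gradients, via Assumption~\ref{assump:parameter-free-algo} plus $\sum_t \bar g_{n,t}^2\le 2\sum_t\E_{t-1}[\bar g_{n,t}^2]+O(\bar G_n^2\log\delta^{-1})$; and a martingale term handled by a second-order inequality whose variance is cancelled by the $-\frac{\alpha}{2}$ term of Assumption~\ref{assump:exp-concave}.

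The differences are minor and mostly in your favour. You use Freedman's inequality with $\lambda=\min(\alpha/2,1/b)$, where the paper uses Lemma~\ref{lemma:concentration_second_order} with $\gamma=\alpha/3$; your version gives a deviation linear rather than quadratic in $b$. More importantly, you explicitly bound the gap between $\E_{t-1}[\bar Z_t^2]$ and $\E_{t-1}[Z_t^2]$ (with $Z_t=\hat\g_t^\top(\c_t-\c)$, $\bar Z_t=\bar\g_t^\top(\c_t-\c)$) through the second moment of $(|\xi_{n,t}|-\Delta_t)_+$. The paper instead just asserts $|\bar Z_t|\le|Z_t|$, which coordinatewise clipping does not guarantee for $N\ge 2$: with $\hat\g_t=(10,-9)$, $\c_t-\c=(1,1.2)$ and threshold $5$, one gets $Z_t=-0.8$ but $\bar Z_t=-1$. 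On this point your argument is the more careful one. To make it go through, take $\lambda$ a constant factor below $\alpha/2$, so as to absorb the factor coming from $\bar Z_t^2\le(1+\epsilon)Z_t^2+(1+\epsilon^{-1})(\bar Z_t-Z_t)^2$.
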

\begin{proof}
    Recall $\nabla L_t(\c_t) = \E_{t-1}[\hat \g_t]$ where we denote $\hat \g_t = (\hat g_{n,t})_{1 \leq n \leq N} = \nabla \ell_t(\c_t)$ the noisy observed gradient and $\bar \g_t = (\clip(\hat g_{n,t}, \bar G_{n,t}))_{1\le n \le N}$ the clipped version of $\hat \g_t$ at time $t$. We set $Z_t = \hat \g_t^\top(\c_t - \c)$ and $\bar Z_t = \bar \g_t^\top(\c_t - \c)$ and %\in \partial L_t(\c_t)$
   one has by stochastic $\alpha$-exp-concavity
    \begin{align}
    \sumT L_t (\c_t) -  L_t(\c) &\leq \sumT \E_{t-1}[Z_t] - \frac{\alpha}{2}\E_{t-1}[Z_t^2] \notag \\
    &\leq \underbrace{\sumT \E_{t-1}[Z_t - \bar Z_t]}_{\text{clipping bias}} + \underbrace{\sumT\E_{t-1}[\bar Z_t] - \frac{\alpha}{2}\E_{t-1}[\bar Z_t^2]}_{\text{stochastic regret}} \, \label{eq:decomposition_reg}
    \end{align}
    since $|\bar Z_t| \leq |Z_t|, t\ge 1$.

\paragraph{Bound on the stochastic regret.} For every $t\ge 1$,
\[
\sum_{n=1}^N |\bar g_{n,t}||c_{n,t} - c_n| \leq 2 \sum_{n=1}^N \bar G_{n,t} C_n \leq 2 \sup_{1 \leq t\le T} \sum_{n=1}^N \bar G_{n,t} C_n =: D.
\]
Then, by the second inequality of Lemma~\ref{lemma:concentration_second_order}, with $|\bar Z_t| \leq D, t\ge 1$ one has for every $\gamma >0$, with probability $1-2\delta$
\[
\sumT \E_{t-1}[\bar Z_t] \leq \sumT \bar Z_t + \frac{3\gamma}{2}\E_{t-1}[\bar Z_t^2] + \bigg(2D^2 + \frac{2}{\gamma}\bigg)\log(\delta^{-1}).
\]
Taking $\gamma = \frac{\alpha}{3} > 0$ cancels the second order terms and we have a stochastic regret bounded as
\begin{equation}
    \label{eq:stochastic_reg}
    \sumT \E_{t-1}[\bar Z_t] - \frac{\alpha}{2} \E_{t-1}[\bar Z_t^2] \leq \sumT \bar Z_t + \bigg(2D^2 + \frac{6}{\alpha}\bigg)\log(\delta^{-1})
\end{equation}
with probability $1-2\delta$.
The problem is now reduced to bound the linear term $\sumT \bar Z_t = \sumT \bar \g_t^\top (\c_t - \c)$ and one has 
\begin{align}
\sumT \bar \g_t^\top(\c_t - \c) &= \sum_{n=1}^N \sumT \bar g_{n,t}(c_{n,t} - c_{n}) \leq \sum_{n=1}^N |c_n|\bigg(\Xi_1\sqrt{\textstyle \sumT |\bar g_{n,t}|^2} + \Xi_2\bar G_n\bigg) \notag \\
&\leq \sum_{n=1}^N |c_n|\bigg(\Xi_1\sqrt{\textstyle \sumT 2\E_{t-1}[|\bar g_{n,t}|^2] + 2 \bar G_n^2 \log(\delta^{-1})} + \Xi_2\bar G_n\bigg) \label{eq:param_free_high_proba}
\end{align}
where first inequality is by Assumption~\ref{assump:parameter-free-algo} on iterates $(c_{n,t})$ and gradients $(\bar g_{n,t})$ with $|\bar g_{n,t}| \leq \bar G_n := \sup_{1\leq t\leq T} \bar G_{n,t}$; and last inequality holds with probability $1-\delta$, applying Lemma~\ref{lemma:ville_freedman} on $(|\bar g_{n,t}|^2 - 2\E_{t-1}[|\bar g_{n,t}|^2])/(2\bar G_n^2)$.

\paragraph{Bound on the clipping bias.} For every $t\ge1$ one has for every coordinate $n \in [N], g_{n,t} = \E_{t-1}[\hat g_{n,t}], |g_{n,t}| \leq G_{n,t}$ and
\[|\hat g_{n,t} - \bar g_{n,t}| = (|\hat g_{n,t}| - \bar G_{n,t})_+ \leq  (|\hat g_{n,t} - g_{n,t}| + |g_{n,t}| - \bar G_{n,t})_+ \leq (|\hat g_{n,t} - g_{n,t}| - (\bar G_{n,t}-G_{n,t}))_+.\]
Then, one has for every $n \in [N]$ with $\xi_{n,t} = \hat g_{n,t} - g_{n,t}, \Delta_{n,t} = \bar G_{n,t} - G_{n,t}$ and using Assumption~\ref{assump:subexp}, we have when $G_{n,t} > 0$ (else gradients are null and we take $\bar G_{n,t} = G_{n,t}$):
\begin{align} 
\E_{t-1}[|\xi_{n,t}|] &\leq \int_{\Delta_{n,t}}^\infty \mathbb P_{t-1}(|\xi_{n,t}| \ge u) \, \mathrm d u \notag \\
& \leq \int_{\Delta_{n,t}}^\infty \exp\left(-\frac{1}{2}\min\left(\frac{u^2}{\nu^2},\frac{u}{\mu}\right)\right) \, \mathrm{d}u \notag \\
&= \int_{\Delta_{n,t}}^{\frac{\nu^2}{\mu}}
\exp\!\left(-\frac{u^2}{2\nu^2}\right)\,\mathrm du
+ \int_{\Delta_{n,t}}^{\infty}
\exp\!\left(-\frac{u}{2\mu}\right)\,\mathrm du %\qquad (\bar G_{n,t} - G_{n,t} > \tfrac{\nu^2}{\mu}) 
\notag \\
&\leq \sqrt{\frac{\pi}{2}}\nu \exp\left(-\frac{\Delta_{n,t}^2}{2\nu^2}\right) + 2\mu \exp\left(-\frac{\Delta_{n,t}}{2\mu}\right) \notag \\
&\leq C(\nu,\mu) \exp\left(-\frac{1}{2}\min\left(\frac{\Delta_{n,t}^2}{\nu^2}, \frac{\Delta_{n,t}}{\mu}\right)\right),
\label{eq:bias_clipping_before_sum}
\end{align}
where $C(\nu,\mu) = \sqrt{0.5\pi}\nu + 2\mu$. Thus, the clipping bias term is bounded, with $|c_{n,t} - c_n| \leq 2C_n, n \in [N]$, as
\begin{align}
\sumT \E_{t-1}[\hat \g_t - \bar \g_t]^\top (\c_t - \c) &\leq \sumT \sum_{n=1}^N 2C_n \E_{t-1}[|\hat g_{n,t} - \bar g_{n,t}|] \notag \\ &\underset{\eqref{eq:bias_clipping_before_sum}}{\leq} 2 \sum_{n=1}^N \sumT C_n C(\nu,\mu) \exp\left(-\frac{1}{2}\min\left(\frac{\Delta_{n,t}^2}{\nu^2}, \frac{\Delta_{n,t}}{\mu}\right)\right).
\label{eq:bias_clipping}
\end{align}

\paragraph{Conclusion.} Using a union bound, we finally get with probability at least $1-3\delta$, using \eqref{eq:decomposition_reg}, \eqref{eq:stochastic_reg}, \eqref{eq:param_free_high_proba}, \eqref{eq:bias_clipping}
\begin{multline}
    R_T(\c) \leq \sum_{n=1}^N |c_n|\bigg(\Xi_1 \bigg(\sqrt{2\textstyle\sumT \E_{t-1}[|\hat g_{n,t}|^2} + \sqrt{2 \bar G_n^2 \log(\delta^{-1})} \bigg) + \Xi_2 \sup_{1\le t \le T}(G_{n,t} + \Delta_{n,t})\bigg) \\ + 2 \sumN \sumT C_n C(\nu,\mu) \exp\left(-\frac{1}{2}\min\left(\frac{\Delta_{n,t}^2}{\nu^2}, \frac{\Delta_{n,t}}{\mu}\right)\right)
    \\ +
8 \bigg(\sup_{1\le t \le T} \sum_{n=1}^N C_n(G_{n,t} + \Delta_{n,t}\ind{G_{n,t}>0})\bigg)^2 \log(\delta^{-1}) + \frac{6}{\alpha}\log(\delta^{-1}).
   \label{eq:param_free_clip_almost_done}
\end{multline}
By setting the clipping margin $\Delta_{n,t}  = \max(\nu\sqrt{2L_t}, 2\mu L_t)$ with time-dependent $L_t = \log\left(\frac{t \, C(\nu,\mu)}{\log(\delta^{-1})}\right)$, the exponential term simplifies as follows:
\begin{align*}
\sum_{t=1}^T C(\nu,\mu) \exp\left(-\frac{1}{2}\min\left(\frac{\Delta_{n,t}^2}{\nu^2}, \frac{\Delta_{n,t}}{\mu}\right)\right) &= \sum_{t=1}^T C(\nu,\mu) e^{-L_t} \\
&= \sum_{t=1}^T C(\nu,\mu) \frac{\log(\delta^{-1})}{t \, C(\nu,\mu)} \\
&= \log(\delta^{-1}) \sum_{t=1}^T \frac{1}{t} \\
&\leq \log(\delta^{-1})(\log(T)+1).
\end{align*}
Finally from \eqref{eq:param_free_clip_almost_done} we get
\begin{multline}
\label{eq:param_free_clip_complete}
R_T(\c)
\leq \sum_{n=1}^N |c_n| \bigg(\Xi_1\sqrt{2\textstyle\sumT \E_{t-1}[|\hat g_{n,t}|^2} + \big(\Xi_1\sqrt{2\log(\delta^{-1})} + \Xi_2\big) (G_n + \Delta)\bigg) \\ + 2\bigg[\sup_{1\le t \le T} \sumN C_n (\log(T) +1)\ind{G_{n,t}>0} + 4\bigg(\sup_{1\leq t\leq T}\sum_{n=1}^N C_n \big(G_{n,t} + \Delta\ind{G_{n,t}>0}\big)\bigg)^2 + \frac{3}{\alpha}\bigg]\log(\delta^{-1})
\end{multline}
where $\Delta := \sup_{1 \le t \le T} \Delta_{n,t} = \max\Big\{ \nu \sqrt{2\log\big(\tfrac{T C(\nu,\mu)}{\log(\delta^{-1})}\big)}, \; 2\mu \log\big(\tfrac{T C(\nu,\mu)}{\log(\delta^{-1})}\big) \Big\} \le O(\max(\nu,\mu)\log(T))$ and this completes the proof of Theorem~\ref{th:param-free_high_prob_general}. Theorem~\ref{th:param-free_high_prob} is obtained by setting $C_1=\dots=C_N=C$ and $G_{1,t} = \dots = G_{N,t} = G, t \ge 1$ and because for every $t \ge 1$, one has under the square root, with $g_{n,t} = \E_{t-1}[\hat g_{n,t}]$ and $|g_{n,t}| \leq G$

\begin{align*}
%\E_{t-1}[|\bar g_{n,t}|^2] \leq
\E_{t-1}[|\hat g_{n,t}|^2] &\leq \E_{t-1}[(|\hat g_{n,t} - g_{n,t}| + |g_{n,t}|)^2] \\
&\leq \E_{t-1}[|\hat g_{n,t} - g_{n,t}|^2] + \E_{t-1}[|g_{n,t}|^2] \\
&\leq \sigma^2 + G^2.
\end{align*}
\end{proof}

\newpage 
\section{Proof of Theorem~\ref{theo:adapt_clipping_param_free}}
\label{appendix:proof_theo_param-free_high_prob_adapt_clipping}
We restate and prove below a complete version of Theorem~\ref{theo:adapt_clipping_param_free}. We consider the general case with a coordinate- and time-dependent clipping rule where we have different expected gradient bounds $\bG_t := (G_{n,t})_{1\le n \le N}$ at each time $t\ge 1$. 
% Consequently, we write at each time $t\ge 1$ the clipping levels associated with an priori gradient bounds vector $\bG_t \in \R^N$ as
% \[
% \bar \bG_t = \bG_t + \Delta, \qquad \text{ with } \Delta \in \calD := \big\{0,\dots, \lfloor \sqrt T \rfloor +1 \big\}\,, \quad t \ge 1.
% \]
% The final output is, at each time $t \ge 1$
% \begin{equation}
%     \label{eq:aggreg_clipping_multi}
%     \hat \c_t = \sum_{\Delta \in \calD} w_t^{\Delta}\c_t^{\Delta}
% \end{equation}
% and we denote $\c_t^{\Delta}$ the output at time $t$ of Algorithm~\ref{alg:param_free_unbounded_gradients} with clipping threshold $\bar \bG_t = \bG_t + \Delta$. 

\begin{theorem}
\label{theo:adapt_clipping_param_free_general}Let $T, N \ge 1, (C_n) >0$ and $\delta \in (0,1)$. Assume $(\ell_t)_{t \ge 1}$ satisfy Assumption~\ref{assump:exp-concave} and for every $t \ge 1, \c \in \R^N, \|\c\|_\infty \leq C, \nabla \ell_t(\c)$ satisfies Assumption~\ref{assump:subexp} and
\[
|\E_{t-1}[\nabla \ell_t(\c)_n]| \leq G_{n,t}, \, n \in [N] \quad \text{and} \quad \max_{1\le n \le N} \E_{t-1}[|\nabla \ell_t(\c)_n - \E_{t-1}[\nabla \ell_t(\c)_n]|^2] \le \sigma^2\,.
\] 
With sequential predictions $\hat \c_t = \sum_{\Delta \in \calD} w_t^{\Delta} \c_t^{\Delta}, t\ge 1$ as in \eqref{eq:aggreg_clipping} with $(w_t^{\Delta})_{\Delta \in \calD}$ satisfying Assumption~\ref{assump:second_order_algo}, the regret is bounded with probability $1-6\delta$ and for any $\c \in \R^N, |c_n| \le C_n$, as
\begin{multline*}
    \sumT L_t(\hat \c_t) - L_t(\c) \leq O\bigg[\sum_{n=1}^N |c_n|\bigg(\Xi_1 \sqrt{\textstyle\sumT \E_{t-1}[|\hat g_{n,t}|^2} + \Big(\Xi_1\sqrt{\log(\delta^{-1})} + \Xi_2\Big)\Big(\sup_{1\le t \le T} G_{n,t} + \mu \log T\Big)\bigg) \\ 
    + \bigg(\frac{\Xi_3^2}{\alpha} + \Xi_4\sumN C_n\Big(\sup_{1\le t \le T} G_{n,t} + 2\mu\log(TN)\ind{G_{n,t}>0}\Big)\bigg) \log(\sqrt{T}+1) \\
 + \bigg((\alpha + 1)  \sup_{1 \le t \le T} \bigg(\sumN C_n \Big(G_{n,t} + 2\mu\log(TN)\ind{G_{n,t}>0}\Big)\bigg)^2  + \frac{1}{\alpha}\bigg) \log(\delta^{-1}) \bigg]\,,
\end{multline*}
where the notation $O[\cdot]$ hides multiplicative constants.
\end{theorem}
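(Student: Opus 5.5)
The plan is to split the regret of the aggregated predictor into an aggregation regret and the regret of a well-chosen expert. Fix the grid element $\Delta^\star\in\calD$ just above the oracle margin $\max_t \Delta_t$ from Theorem~\ref{th:param-free_high_prob_general}. This is possible as soon as $(\nu\vee\mu)\log T\le\sqrt T$; otherwise the claimed bound exceeds the trivial $O(NCGT)$ bound and there is nothing to prove. Then write
\[
\sumT L_t(\hat \c_t)-L_t(\c) = \sumT \big(L_t(\hat \c_t)-L_t(\c_t^{\Delta^\star})\big) + \sumT \big(L_t(\c_t^{\Delta^\star})-L_t(\c)\big).
\]
The second sum is handled directly by Theorem~\ref{th:param-free_high_prob_general} applied to the instance with constant margin $\Delta^\star\ge\Delta_t$. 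Its proof only uses $\Delta_{n,t}\ge\Delta_t$ to make the clipping bias summable. This gives the first and third lines of the bound with probability $1-2\delta$, with $\Delta^\star=O(\mu\log(TN))$ up to the $\nu$ part.

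For the aggregation term, I apply Assumption~\ref{assump:exp-concave} with $\c_1=\hat\c_t$ and $\c_2=\c_t^{\Delta^\star}$, both in the box. This gives
\[
L_t(\hat\c_t)-L_t(\c_t^{\Delta^\star})\le \E_{t-1}\big[Y_t-\tfrac{\alpha}{2}Y_t^2\big], \qquad Y_t:=\nabla\ell_t(\hat\c_t)^\top(\hat\c_t-\c_t^{\Delta^\star}).
\]
Defining expert gradients $\hat\nabla_{\Delta,t}=\nabla\ell_t(\hat\c_t)^\top\c_t^\Delta$, we get $Y_t=\hat\nabla_t^\top\w_t-\hat\nabla_{\Delta^\star,t}$, so Assumption~\ref{assump:second_order_algo} bounds $\sum_t Y_t$ by $\Xi_3\sqrt{\log|\calD|\sum_t Y_t^2}+\Xi_4\log|\calD|\max_t\|\hat\nabla_t\|_\infty$. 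Young's inequality turns the square-root term into $\frac{\alpha}{4}\sum_tY_t^2+\frac{\Xi_3^2}{\alpha}\log|\calD|$, with $\log|\calD|=\log(\sqrt T+1)$.

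Next I must pass from realized to conditional quantities. I write $\sum\E_{t-1}[Y_t]-\frac\alpha2\E_{t-1}[Y_t^2]$ as $\sum Y_t$ plus the martingale deviations of $Y_t$ and $Y_t^2$. Since $Y_t$ is unbounded (raw gradients), I plan to intersect with the high-probability event
\[
\mathcal E=\Big\{\max_{t,n}|\xi_{n,t}|\le 2\mu\log(TN/\delta)\ \text{(or the }\nu\text{-analogue)}\Big\},
\]
which by Assumption~\ref{assump:subexp} and a union bound has probability $1-\delta$. On $\mathcal E$, $|Y_t|\le D:=2\sum_n C_n(G_{n,t}+2\mu\log(TN)\ind{G_{n,t}>0})$. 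This also controls $\max_t\|\hat\nabla_t\|_\infty\le D$, giving the $\Xi_4$ term. I then work with truncated variables $Y_t\ind{\mathcal E_t}$ and apply Lemma~\ref{lemma:concentration_second_order} with $\gamma\asymp\alpha$ to cancel $\E_{t-1}[Y_t^2]$ against $-\frac{\alpha}{4}\E_{t-1}[Y_t^2]$. I also apply Lemma~\ref{lemma:ville_freedman} to compare $\sum Y_t^2$ with $2\sum\E_{t-1}[Y_t^2]$ up to $D^2\log\delta^{-1}$; this is the origin of the $(\alpha+1)D^2\log\delta^{-1}$ term. A union bound over all events yields probability $1-6\delta$.

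The main obstacle is this truncation step: the conditional expectations involve the untruncated $Y_t$. The gap $\E_{t-1}[|Y_t|\ind{\mathcal E_t^c}]$ must be bounded by a tail integral as in \eqref{eq:bias_clipping_before_sum}, made $O(1/T)$ per round by the choice of the $\log(TN)$ level and summing to lower-order terms. The first thing I would try is to reuse the clipping-bias computation of Theorem~\ref{th:param-free_high_prob_general} verbatim, with $\hat\c_t-\c_t^{\Delta^\star}$ playing the role of $\c_t-\c$.
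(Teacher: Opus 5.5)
Your skeleton matches the paper's proof. You split the regret the same way, into the aggregation regret plus the regret of the instance with the smallest grid margin $\Delta^\star$ above the oracle one, and you handle the latter with Theorem~\ref{th:param-free_high_prob_general}. You treat the aggregation term with the same tools: Assumption~\ref{assump:exp-concave}, Assumption~\ref{assump:second_order_algo} and Young. You also use the same probability-$(1-\delta)$ event on which every $|\hat g_{n,t}|\le G_{n,t}+O((\nu\vee\mu)\log(TN/\delta))$. That event serves, as in the paper, both to bound $\sup_t\|\hat\nabla_t\|_\infty$ and to replace $\sum_t Y_t^2$ by a bounded sum controlled through Corollary~\ref{cor:concentration_expectation}.

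The one genuine difference is the first-order martingale term. The paper never truncates it. It applies the \emph{first} inequality of Lemma~\ref{lemma:concentration_second_order}, a self-normalized bound valid for unbounded square-integrable adapted increments, directly to $Y_t$. This gives $\sum_t\E_{t-1}[Y_t]\le\sum_t Y_t+\frac{\gamma}{2}\sum_t(\E_{t-1}[Y_t^2]+Y_t^2)+\frac{2}{\gamma}\log(\delta^{-1})$. The conditional part is absorbed by $-\frac{\alpha}{2}\E_{t-1}[Y_t^2]$, and the realized part joins the squares you already control. So the obstacle you single out never arises in the paper's route. Your route also goes through: you apply the bounded second inequality to $\tilde Y_t=Y_t\ind{\mathcal E_t}$ and add the bias $\sum_t\E_{t-1}[|Y_t|\ind{\mathcal E_t^c}]$. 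It needs only bounded-increment concentration, at the price of this extra lower-order term.

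A few points to tidy up.
\begin{itemize}
\item \textbf{Bias term.} The bias is not verbatim the computation \eqref{eq:bias_clipping_before_sum}, because $\mathcal E_t^c$ couples the coordinates. Use $|\xi_{n,t}|\ind{\mathcal E_t^c}\le\lambda\ind{\mathcal E_t^c}+|\xi_{n,t}|\ind{|\xi_{n,t}|>\lambda}$, with $\lambda$ your truncation level. This gives a total of $O\big(\delta\sum_n C_n(\sup_tG_{n,t}+\lambda)\big)$, which is absorbed in the $\Xi_4$ line.
\item \textbf{Keep the event truncation.} Do not switch to coordinatewise clipping. The property $|\tilde Y_t|\le|Y_t|$ is what gives $\E_{t-1}[\tilde Y_t^2]\le\E_{t-1}[Y_t^2]$ in the needed direction. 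Coordinatewise clipping of $\hat g_{n,t}$ does not guarantee this when $N\ge2$, because of cancellations between coordinates. The paper asserts $|\bar Z_t|\le|Z_t|$ for its coordinatewise-clipped version, which is automatic only when a single coordinate is active.
\item \textbf{Constants.} Your constants overshoot. A Young term $\frac{\alpha}{4}\sum_tY_t^2$ already becomes $\frac{\alpha}{2}\sum_t\E_{t-1}[Y_t^2]$ after Corollary~\ref{cor:concentration_expectation}. That leaves nothing to absorb $\frac{3\gamma}{2}\E_{t-1}[\tilde Y_t^2]$. Choose the Young parameter $\eta$ and $\gamma$ with $2\eta+3\gamma\le\alpha$, e.g.\ $\gamma=\alpha/4$ and $\eta=\alpha/8$ as in the paper.
\item \textbf{Fallback case.} Your fallback when the oracle margin exceeds $\sqrt T+1$ is only heuristic. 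The grid $\calD$ is in absolute units, so the stated bound need not dominate $2T\sup_t\sum_nC_nG_{n,t}$ when $\sum_nC_n$ is small. The paper simply assumes the oracle margin lies in the grid, so this is a loose end you share with it.
\end{itemize}
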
 
\begin{proof}[{\bfseries of Theorem~\ref{theo:adapt_clipping_param_free_general}}]
At each time $t \ge 1, \c_t^{\Delta} \in \R^N$ is the output of Algorithm~\ref{alg:param_free_unbounded_gradients} with clipping margins $\Delta_t = \Delta, t \ge 1$ and $\hat \c_t = \sum_{\Delta \in \calD} w_t^{\Delta} \c_t^{\Delta}$ is the average prediction over $\calD$. One can decompose the regret, for every $\Delta^* \in \calD$,  as
\begin{equation*}
    \sumT L_t\bigg(\sum_{\Delta \in \calD} w^{\Delta}_t\c^{\Delta}_t\bigg) - L_t(\c) = \underbrace{\sumT L_t(\hat \c_t) - L_t(\c_t^{\Delta^*})}_{\text{expert aggregation regret}}  + \underbrace{\sumT L_t(\c_t^{\Delta^*})-L_t(\c)}_{\text{regret of Alg.~\ref{alg:param_free_unbounded_gradients} with $\Delta^*$ \quad $:= R_T(\c)$}}.
\end{equation*}

\paragraph{Expert aggregation regret.}

Define $\w_t = (w_t^{\Delta})_{\Delta\in \calD}$ and one has
\[
\hat \nabla_t = \nabla_{\w_t}\,\ell_t\Bigg(\sum_{\Delta \in \calD} w_t^{\Delta} \c_t^{\Delta}\Bigg)
=
\bigl(\hat \g_t^\top \c_t^{\Delta}\bigr)_{\Delta \in \calD}
\]
where in this part we denote $\hat \g_t = \nabla \ell_t(\hat \c_t) \in \R^N$ the gradient of $\ell_t$ evaluated at $\hat \c_t$, which satisfies Assumption~\ref{assump:subexp} and $\E_{t-1}[\nabla \ell_t(\hat \c_t)_n]| \leq G_{n,t}$.
Therefore, denoting $\hat \nabla_t = (\hat \nabla_t^\Delta)_{\Delta \in \calD}$ and letting $Z_t = \hat \nabla_t^\top \w_t - \hat \nabla_t^{\Delta^*}$, by Assumption~\ref{assump:exp-concave} one has
    \begin{align}
        \sumT L_t\Big(\sum_{\Delta \in \calD} w_t^{\Delta} \c_t^{\Delta}\Big) - L_t(\c_t^{\Delta^*}) &\leq \sumT \E_{t-1}[Z_{t}] - \frac{\alpha}{2}\E_{t-1}[Z_{t}^2] \notag \\
        &\leq \sumT Z_{t} + \frac{\gamma}{2} Z_t^2 + \frac{\gamma - \alpha}{2} \E_{t-1}[Z_{t}^2] + \frac{2}{\gamma}\log(\delta^{-1}) \label{eq:bound_expert_reg}
\end{align}
where last inequality holds with high probability $1-\delta$ for any $\gamma >0$ by Lemma~\ref{lemma:concentration_second_order}. 
Moreover, one can bound the first order term by Assumption~\ref{assump:second_order_algo} with $Z_t = \hat \nabla_t^\top \w_t - \hat \nabla_t^{\Delta^*}$
    \[
    \sumT Z_t \leq \Xi_3\sqrt{\log(|\calD|)\sumT Z_t^2} + \Xi_4\log(|\calD|) \sup_{1\leq t \leq T} \|\hat \nabla_t\|_\infty.
    \]
Further, by Young's inequality, we have for every $\eta >0$
    \[
    \Xi_3 \sqrt{\log(|\calD|) \sumT  Z_{t}^2} \leq \frac{\Xi_3^2 \log(|\calD|)}{2\eta} + \frac{\eta}{2}\sumT  Z_{t}^2.
    \]
And in bound \eqref{eq:bound_expert_reg} we have
\begin{equation}
\label{eq:bound_first_order}
\sumT Z_t + \frac{\gamma}{2}Z_t^2 \leq \frac{\Xi_3^2 \log(|\calD|)}{2\eta} + \frac{\eta+\gamma}{2}\sumT  Z_{t}^2 + \Xi_4\log(|\calD|) \sup_{1\leq t \leq T} \|\hat \nabla_t\|_\infty.
\end{equation}
Now we want to relate $\sumT Z_t^2$ to its clipped version using the fact that $\hat \g_t$ has coordinate with sub-exponential noise (Assumption~\ref{assump:subexp}). Indeed, remark that for every $t \ge 1$
\[
Z_t =\hat \nabla_t^\top \w_t - \hat \nabla_t^{\bar G^*} = \sum_{\Delta \in \calD} \sumN \hat g_{n,t}c_{n,t}^\Delta w_t^\Delta - \sumN \hat g_{n,t}c_{n,t}^{\Delta} = \sumN \hat g_{n,t}\bigg(\sum_{\Delta \in \calD} w_t^\Delta c_{n,t}^\Delta - c_{n,t}^{\Delta^*}\bigg)
\]
is still sub-exponential as a sum of sub-exponential random variables $\hat g_{n,t}$. % {\color{orange} écrire un lemme avec les bons paramètres}. 
We define a clipped version of $Z_t, \bar Z_t = \sumN \bar g_{n,t}\bigg(\sum_{\Delta \in \calD} w_t^\Delta c_{n,t}^\Delta - c_{n,t}^{\Delta^*}\bigg)$ where $\bar g_{n,t} = \clip(\hat g_{n,t},\tau_{n,t}), n\in [N]$ for some $\tau_{n,t} > 0$ to be optimized in the proof and that only appears in the analysis. Then, we have
\begin{equation}
\label{eq:equality_clip}
    \sumT Z_t^2 = \sumT \bar Z_t^2
\end{equation}
with probability $\mathbb P(\forall 1\le t \le T, Z_t = \bar Z_t)$.
By the union bound,
\[
\mathbb P\!\left(\forall\, 1 \le t \le T,\; Z_t = \bar Z_t \right)
=
1 - \mathbb P\!\left(\exists\, 1 \le t \le T:\; Z_t \neq \bar Z_t \right)
\;\ge\;
1 - \sum_{t=1}^T \mathbb P_{t-1}\!\left(Z_t \neq \bar Z_t\right).
\]
In particular, for every $t \ge 1$,
\begin{align*}
\mathbb P_{t-1}\!\left(Z_t \neq \bar Z_t\right)
&=
\mathbb P_{t-1}\!\left(
\sum_{n=1}^N \hat g_{n,t}
\Bigl(\sum_\nu w_t^\nu c_{n,t}^\nu - c_{n,t}^{\bar G^*}\Bigr)
\neq
\sum_{n=1}^N \bar g_{n,t}
\Bigl(\sum_\nu w_t^\nu c_{n,t}^\nu - c_{n,t}^{\bar G^*}\Bigr)
\right) \\
&\le
\sum_{n=1}^N
\mathbb P_{t-1}\!\left(
\hat g_{n,t}
\Bigl(\sum_\nu w_t^\nu c_{n,t}^\nu - c_{n,t}^{\bar G^*}\Bigr)
\neq
\bar g_{n,t}
\Bigl(\sum_\nu w_t^\nu c_{n,t}^\nu - c_{n,t}^{\bar G^*}\Bigr)
\right) \\
&=
\sum_{n=1}^N
\mathbb P_{t-1}\!\left(\hat g_{n,t} \neq \bar g_{n,t}\right) \\
&=
\sum_{n=1}^N
\mathbb P_{t-1}\!\left(|\hat g_{n,t}| > \tau_{n,t} \right) \\
&\leq \sum_{n=1}^N
\mathbb P_{t-1}\!\left(|\hat g_{n,t} - \E_{t-1}[\hat g_{n,t}]|  > \tau_{n,t} - G_{n,t}\right) \\
&\leq \sumN  C(\nu,\mu)\exp\left(-\frac{1}{2}\min\left(\frac{(\tau_{n,t} - G_{n,t})^2}{\nu^2}, \frac{\tau_{n,t} - G_{n,t}}{\mu}\right)\right),
\end{align*}
where second last inequality is because $|\E_{t-1}[\hat g_{n,t}]| \leq G_{n,t}, n \in [N]$, and last inequality is by Assumption~\ref{assump:subexp} with $C(\nu,\mu) = \sqrt{0.5\pi}\nu + 2\mu$. Finally, taking
\begin{multline*}
\tau_{n,t} = G_{n,t} + \nu \sqrt{2\log\big(\tfrac{NT C(\nu,\mu)}{\delta}\big)} \vee 2\mu \log\big(\tfrac{NT C(\nu,\mu)}{\delta}\big) \\
\implies C(\nu,\mu)\exp\left(-\frac{1}{2}\min\left(\frac{(\tau_{n,t} - G_{n,t})^2}{\nu^2}, \frac{\tau_{n,t} - G_{n,t}}{\mu}\right)\right) \leq \frac{\delta}{TN}
\end{multline*}
and Equality \eqref{eq:equality_clip} holds with probability at least $1- \sumT \sumN \delta \frac{1}{TN} = 1 - \delta$.
With $\sum_{\Delta \in \calD} w_t^{\Delta} = 1, t\ge 1$ and such clipping, we have for every $t \ge 1$
\[
|\bar Z_{t}| = \bigg|\sumN \bar g_{n,t}
\Bigl(\sum_{\Delta \in \calD} w_t^{\Delta} c_{n,t}^{\Delta} - c_{n,t}^{\Delta^*}\Bigr)\bigg| \leq \sup_{1\le t \le T} \sumN 2C_n \big(G_{n,t} + \Delta(\tau_{n,t})\big) := D.
\]
where $\Delta(\tau_{n,t}) := \nu \sqrt{2\log\big(\tfrac{NT C(\nu,\mu)}{\delta}\big)} \vee 2\mu \log\big(\tfrac{NT C(\nu,\mu)}{\delta}\big) \leq 2(\mu,\nu)\log\big(\tfrac{NT C(\nu,\mu)}{\delta}\big) = O(\log(TN))$ provided that $\log\big(\tfrac{NT C(\nu,\mu)}{\delta}\big) \ge \frac{1}{2}$, i.e. $T \ge \exp(1/2)\delta/(NC(\nu,\mu))$.\\
By Corollary~\ref{cor:concentration_expectation}, with $0 \le \bar Z_t^2 \le D^2$, one has with probability $1-\delta$
\[
\sumT \bar Z_t^2 \leq 2 \sumT \E_{t-1}[\bar Z_t^2] + 2D^2\log(\delta^{-1}).
\]
Finally, with probability $1-2\delta$ (union bound), one has in Equation~\eqref{eq:bound_first_order}
\[
\frac{\eta + \gamma}{2} \sumT Z_t^2 \leq (\eta + \gamma) \sumT \E_{t-1}[\bar Z_t^2] + (\eta + \gamma)D^2\log(\delta^{-1}),
\]
which gives after combining Equations~\eqref{eq:bound_expert_reg}, \eqref{eq:bound_first_order} and last inequality, since $\frac{\gamma-\alpha}{2}\E_{t-1}[Z_t^2]\leq \frac{\gamma-\alpha}{2}\E_{t-1}[\bar Z_t^2]$,$\gamma < \alpha$ and $|\bar Z_t| \leq |Z_t|$ 
\begin{multline} 
\label{eq:bound_reg_expert_almost_done}
    \sumT L_t(\hat \c_t) - L_t(\c_t^{\Delta^*}) \leq \frac{2\eta + 3\gamma - \alpha}{2} \sumT \E_{t-1}[\bar Z_t^2]  + \frac{\Xi_3^2 \log(|\calD|)}{2\eta} \\ + \Xi_4\log(|\calD|) \sup_{1\leq t \leq T} \|\hat \nabla_t\|_\infty + \bigg((\eta + \gamma)D^2 + \frac{2}{\gamma}\bigg)\log(\delta^{-1})
\end{multline}
with probability $1-3\delta$.
Finally, one has to control $\sup_{1 \le t \le T} \|\hat \nabla_t\|_\infty$ with high probability. Indeed,
\[
\|\hat \nabla_t\|_\infty =  \sup_{\Delta \in \calD} |\hat \g_t^\top \c_t^\Delta| \leq \sup_{1 \le t \le T} \sumN C_n |\hat g_{n,t}|
\]
since $c_{n,t}^{\calD} \in [-C_n,C_n]$ for every $\Delta \in \calD$ and taking the sup over $1\le t \le T$, one has 
\begin{equation}
    \label{eq:bound_proba_sup_nabla}
\sup_{1\le t \le T} \|\hat \nabla_t\|_\infty \le \sup_{1\le t \le T} \sumN C_n|\hat g_{n,t}| = \sup_{1\le t \le T} \sumN C_n|\bar g_{n,t}|
\end{equation}
where last inequality holds probability at least $1-\delta$ with again $\bar g_{n,t} = \clip(\hat g_{n,t},\tau_{n,t})$ using the same clipping threshold as before since \[
\mathbb P\bigg(\forall \, 1\le t \le T,\;\sumN C_n|\hat g_{n,t}| = \sumN C_n|\bar g_{n,t}|\bigg) \ge 1 - \sumT \sumN \mathbb P(|\hat g_{n,t}| > \tau_{n,t}) \ge 1 - \delta.
\]
Then, taking $\eta = \tfrac{\alpha - 3\gamma}{2}, \gamma < \frac{\alpha}{3} < \alpha$ cancels the second order terms in \eqref{eq:bound_reg_expert_almost_done} and gives, combined with \eqref{eq:bound_proba_sup_nabla}
\begin{multline} 
    \label{eq:bound_reg_expert_final}
    \sumT L_t(\hat \c_t) - L_t(\c_t^{\Delta^*}) \leq \frac{\Xi_3^2 \log(|\calD|)}{\alpha - 3\gamma} + \Xi_4\log(|\calD|) \sumN C_n\tau_n\\ + \bigg(\frac{\alpha-\gamma}{2}  \bigg(2\sumN C_n \tau_n \bigg)^2 + \frac{2}{\gamma}\bigg)\log(\delta^{-1})
\end{multline}
with probability $1-4\delta$ (union bound) and $\tau_n = \sup_{1 \le t \le T} G_{n,t} + O(\max(\nu,\mu)\log\big(\frac{TN}{\delta}\big))$. %and $D=2\sumN C_n \big(\sup_{1\le t \le T} G_{n,t} + 2\mu\log\big(\tfrac{NT}{\delta}\big)\big)$.

\paragraph{Regret $R_T(\c)$ of Algorithm~\ref{alg:param_free_unbounded_gradients} with clipping margin $\Delta^*$.}

Let \[
\Delta(\nu,\mu) =  \max\bigg\{\nu\sqrt{2\log\Big(\tfrac{(\sqrt{0.5\pi}+2\mu)T}{\log(\delta^{-1})}\Big)},2\mu\log\Big(\tfrac{(\sqrt{0.5\pi}+2\mu)T}{\log(\delta^{-1})}\Big)\bigg\}
\]
be the optimal clipping margin in Theorem~\ref{theo:adapt_clipping_param_free} and define $\Delta^* = \argmin_{\Delta > \Delta(\nu,\mu) \in \calD} |\Delta - \Delta(\nu,\mu)|$ the closest upper margin in the grid $\calD$. \\
From Equation~\eqref{eq:param_free_clip_almost_done} in Proof of Theorem~\ref{th:param-free_high_prob_general} (Appendix~\ref{appendix:proof_theo_param-free_high_prob}), with clipping margin $\Delta^*$, $G_{n} = \sup_{1 \le t \le T} G_{n,t}$ and $\hat g_{n,t} = \nabla \ell_t(\c_t^{\Delta^*}), |\E_{t-1}[\hat g_{n,t}]|\leq G_{n,t}, n \in [N], t \ge 1$ (and assumptions of Theorem~\ref{th:param-free_high_prob_general} are all satisfied), one has

\begin{multline}
\label{eq:bound_param_free_adapt_clipping}
    R_T(\c^\mu) \leq \sum_{n=1}^N |c_n|\bigg(\Xi_1\sqrt{2\textstyle\sumT \E_{t-1}[|\hat g_{n,t}|^2} + \Big(\Xi_1\sqrt{2 \log(\delta^{-1})} + \Xi_2\Big) (G_n + \Delta^*) \bigg) \\ 
    + 2 \sumN \sumT C_n C(\nu,\mu) \exp\left(-\frac{1}{2}\min\left(\frac{(\Delta^*)^2}{\nu^2}, \frac{\Delta^*}{\mu}\right)\right)
    \\ +
8 \bigg(\sup_{1\le t \le T} \sum_{n=1}^N C_n(G_{n,t} + \Delta^*\ind{G_{n,t}>0})\bigg)^2 \log(\delta^{-1}) + \frac{6}{\alpha}\log(\delta^{-1})
\end{multline}
with probability $1-2\delta$.
Since $\Delta^* > \Delta(\nu,\mu)$ we still have (see below \eqref{eq:param_free_clip_almost_done}) \[
\sumT C(\nu,\mu) \exp\left(-\frac{1}{2}\min\left(\frac{(\Delta^*)^2}{\nu^2}, \frac{\Delta^*}{\mu}\right)\right) \le  \log(\delta^{-1})(\log(T)+1)
\]
and by the precision of the grid $\calD$, one has $|\Delta^* - \Delta(\mu,\nu)| \leq 1, \Delta(\mu,\nu) = O((\nu \vee \mu)\log(T))$ and then the deviation term 
\[
O\bigg(\sup_{1\le t \le T} \sum_{n=1}^N C_n(G_{n,t} + \Delta^*\ind{G_{n,t}>0})\bigg) = O\bigg(\sup_{1\le t \le T} \sum_{n=1}^N C_n(G_{n,t} + \Delta(\nu,\mu))\ind{G_{n,t}>0}\bigg)
\]
is still of the same order $O\Big(\log^2 T \log(\delta^{-1})\big(\sumN C_n\big)^2\Big)$.

\paragraph{Conclusion.} Combining \eqref{eq:bound_reg_expert_final} and \eqref{eq:bound_param_free_adapt_clipping} one has with probability $1 - 6\delta$ and $\gamma = \tfrac \alpha 4 < \tfrac \alpha 3$
\begin{multline*}
    \sumT L_t(\hat \c_t) - L_t(\c^\mu) \leq \sum_{n=1}^N |c_n|\bigg[\Xi_1 \sqrt{2\textstyle\sumT \E_{t-1}[|\hat g_{n,t}|^2}
    + \bigg(\Xi_1\sqrt{2 \log(\delta^{-1})} + \Xi_2\bigg)(G_{n} + \tau)\bigg] \\ + \frac{4\Xi_3^2 \log(|\calD|)}{\alpha} + \Xi_4\log(|\calD|) \sumN C_n\bigg(G_n + \tau \bigg) \\
 + \bigg[\frac{3\alpha}{8}  \bigg(2\sumN C_n \bigg(G_n + \tau \bigg)^2 + 4 (\log(T)+1)\sumN C_n\\+ 8\bigg(\sum_{n=1}^N C_n (G_{n} + \tau)\bigg)^2 + \frac{14}{\alpha}\bigg] \log(\delta^{-1}).
\end{multline*}
where $\tau = O\big(\max(\nu,\mu)\log\big(\frac{TN}{\delta}\big)\big)$ and this concludes the proof with $|\mathcal D| \le \sqrt{T} +1$.
\end{proof}

\paragraph{Remark.} Considering sparse expected gradients $\g_t = \E_{t-1}[\hat \g_t]$ the sum $\sumN$ in the deviation terms can be reduced, at each time as a sum over $n \in \operatorname{supp}(\g_t)$, provided that $G_{n,t} = 0$ when $g_{n,t}=0$. This will be useful in particular in the non-parametric setting where $N$ is exponential, but only a small fraction $|\operatorname{supp}(\g_t)|$ of coordinates are non zero (active) at time each time $t$, hence a much smaller diameter in the deviation terms.

\newpage 
\section{Proof of Theorem~\ref{theo:non_param_risk}}
\label{appendix:proof_non_param}

\begin{proof}[{\bfseries of Theorem~\ref{theo:non_param_risk}}] Let $f \in \B^s_{pq}(B)$ be fixed and $Y = f(X) + \varepsilon$ with $\E[\varepsilon|X]=0$ and $\E[\varepsilon^2|X] = \sigma^2 < \infty$. Recall that the training data set $((X_t,Y_t))_{1 \le t \le T} \sim (X,Y)$ is i.i.d. and with same law as the test pair $(X,Y)$.

\paragraph{Decomposition of risk.}

For the square loss, one has for any predictor $\hat f$ this nice decomposition of the expected risk
\begin{align}
    \E_{XY}[(\hat f(X) - Y)^2]   &= \E_X[\E[(\hat f(X) - Y)^2|X]] \notag \\
                            &= \E_X[\E[(\hat f(X) - f(X) - \xi)^2|X]] \notag \\
                            &= \E_X[\E[(\hat f(X) - f(X))^2|X]] + \E_X[\E[\xi^2|X]] - 2 \E_X[\E[(\hat f(X)-f(X))\xi|X]] \notag \\
                            &= \E_X[(\hat f(X) - f(X))^2] + \E[\xi^2] \notag \\
                            &= \|\hat f - f\|^2_2 + \sigma^2. \label{eq:decomposition_L2_risk}
\end{align}
Going back to the excess risk and letting $\hat f = \bar f_T = \tfrac{1}{T}\sumT \hat f_t$ in \eqref{eq:decomposition_L2_risk}, after direct algebraic calculations one has 
\begin{align}
    \|\bar f_T - f\|_2^2 &= \E_{XY}[(\bar f_T(X) - Y)^2] - \E_{XY}[(f(X) - Y)^2] \notag \\
    &\leq \frac{1}{T} \sumT \E_{XY}[(\hat f_t(X) - Y)^2] - \E_{XY}[(f(X) - Y)^2], \label{eq:online-to-batch_Jensen}
\end{align}
by Jensen's inequality and convexity of the square loss.
Further, since $(X_t,Y_t) \sim P_{(X,Y)}$ and is independent of $\F_{t-1}$ and $\hat f_t$ is $\F_{t-1}$ measurable, one has for every $t \ge 1$
\[\E_{XY}[(\hat f_t(X) - Y)^2] = \E_{t-1}[(\hat f_t(X_t) - Y_t)^2],\]
where we used the notation $\E_{t-1}[\cdot] = \E[\cdot \mid \F_{t-1}]$. And finally,
\begin{equation}
    \label{eq:online_to_batch_final}
    \|\bar f_T - f\|_2^2 \leq \frac{1}{T} \underbrace{\sumT \E_{t-1}[(\hat f_t(X_t) - Y_t)^2] - \E_{t-1}[(f(X_t) - Y_t)^2]}_{=: R_T(f)}
\end{equation}
The problem now turns out to analyse the (stochastic) regret $R_T(f)$ and this situation mirrors the problem studied in Section~\ref{section:parameter-free-high-prob} with losses $L_t(\hat f_t) = \E_{t-1}[\ell_t(\hat f_t(X_t))], t\ge 1$ where $\ell_t(\hat f_t(X_t)) = (\hat f_t(X_t) - Y_t)^2$ and $\ell_t$'s randomness is due to $Y_t$'s one.\\
The quantity of interest $R_T(f)$ then decomposes for every oracle estimator $\hat f^*$, into an estimation regret term $R_T(\hat f^*)$ plus a bias (or approximation regret) term
\[
R_T(f) = R_T(\hat f^*) + \sumT \E_{t-1}[\ell_t(\hat f^*(X_t))] - \E_{t-1}[\ell_t(f(X_t))] \underset{\eqref{eq:decomposition_L2_risk}}{=} R_T(\hat f^*) + T \|\hat f^* - f\|_2^2,
\]
with $\ell_t(\hat f) = (\hat f - Y_t)^2$.

\paragraph{Step 1: Bound on the estimation regret $R_T(\hat f^*)$.} 

One has $\hat f_t(X_t) = \sum_e w_{e,t} \hat f_{e,t}(X_t)$ (average over all experts). Consider $e^*=(j_0^*,\Delta^*,\alphabold^*) \in [J_0] \times \calD \times \mathcal A_{j_0^*}$ ($j_0^*$ oracle starting scale, $\alphabold^*$ best scaling coefficients on the grid $\mathcal A_{j_0^*}$, and $\Delta^*$ the best clipping margin). We then define the expert with best parameters $e^*$ at every time $t \ge 1, \hat f_{e^*,t}$ and the oracle regressor $\hat f_{e^*}^*(X_t)$ with oracle coefficients $(c_{j,k})$ and oracle parameters $e^*$. We have at every time $t \ge 1$, the following decomposition
\begin{align*} 
R_T(\hat f^*) &= \sumT L_t(\hat f_t(X_t)) - L_t(\hat f^*) \\ 
&= \underbrace{\sumT L_t(\hat f_t) - L_t(\hat f_{e^*,t}(X_t))}_{\text{expert aggregation regret} \quad =:R_1} +  \underbrace{ \sumT L_t(\hat f_{e^*,t}(X_t)) - L_t(\hat f^*_{e^*}(X_t))}_{\text{regret of Alg.\ref{alg:param_free_unbounded_gradients}} \quad =: R_2}.
% &= \underbrace{\sum_{e^* \in \mathcal E^*}\sumT (L_t(\hat f_t) - L_t(\hat f_{e^*,t}))\ind{X_t \in \X_{e^*}}}_{\text{expert aggregation regret} \quad =:R_1} +  \underbrace{\sum_{e^* \in \mathcal E^*} \sumT (L_t(\hat f_{e^*,t}) - L_t(\hat f^*))\ind{X_t \in \X_{e^*}}}_{\text{regret of Alg.\ref{alg:param_free_unbounded_gradients}} \quad =: R_2}.
\end{align*}
% with general $L_t(\hat f) = \E_{t-1}[\ell_t(\hat f(X_t))]$ such that
% \[
% L_t(\hat y_1) - L_t(\hat y_2) \leq \nabla_{\hat y_1}L_t(\hat y_1)^\top(\hat y_1 - \hat y_2) - \frac{\alpha}{2}\E_{t-1}[(\nabla \ell_t(\hat y_1)^\top(\hat y_1 - \hat y_2))^2].
% \]
% In particular this enables to consider any regression losses $\ell_t(\hat y_t) = \ell(\hat y_t,Y_t) = |\hat y_t - Y_t|^p, p\ge 2$ that are (deterministic) exp-concave functions in $\hat y_t$.

\begin{enumerate} 
\item \textbf{Bound on the expert aggregation regret $R_1$.}
    Let $\hat g_t = \ell_t'(\hat f_t(X_t)) = 2(\hat f_t(X_t) - Y_t)$ be the noisy observed gradient of $L_t'(\hat f_t(X_t))$. In particular, $\hat g_t$ satisfies Assumption~\ref{assump:subexp} and 
    \[
    |\E_{t-1}[\hat g_t]| = 2\E_{t-1}[\hat f_t(X_t) - f(X_t) - \varepsilon_t]| \leq 2(\hat B_\infty + B) =: G.
    \] %The analysis goes similarly to that of Theorem~\ref{theo:adapt_clipping_param_free}.
    The gradient experts $\hat \nabla_t = (\nabla_{e,t})_{e \in \mathcal E}$ writes $\hat \nabla_{e,t} = \hat g_t \cdot \hat f_{e,t}(X_t)$ and $|\E_{t-1}[\hat \nabla_{e,t}]| \leq G \hat B_{\infty}$ where $\hat B_\infty$ is the sup-norm on the experts, bounded as in Lemma~\ref{lemma:predictor_bound}.
Therefore, letting $Z_{e^*,t} = \hat \nabla_t^\top \w_t - \hat \nabla_{e^*,t}$, by Assumption~\ref{assump:exp-concave} on $(\ell_t)$ one has
    \begin{align}
        \sumT L_t(\hat f_t) - L_t(\hat f_{e^*,t})
        &\leq \sumT \E_{t-1}[Z_{e^*,t}] - \frac{\alpha}{2}\E_{t-1}[Z_{e^*,t}^2] \notag \\
        &\leq \sumT Z_{e^*,t} + \frac{\gamma}{2} Z_{e^*,t}^2 + \frac{\gamma - \alpha}{2} \E_{t-1}[Z_{e^*,t}^2] + \frac{2}{\gamma}\log(\delta^{-1}) \label{eq:bound_expert_R1}
\end{align}
where last inequality holds with high probability $1-\delta$ for any $\gamma >0$ by Lemma~\ref{lemma:concentration_second_order}.
%and since at every time $t \ge 1$, there exists a unique $e^*_t \in \mathcal E^*$ such that $X_t \in \X_{e^*_t}$ and $\sumT\sum_{e^* \in \mathcal E^*} Z_{e^*,t}\ind{X_t \in \X_{e^*}} = \sumT Z_{e^*_t,t}$. 
Moreover, %conditionally on each $\{X_t \in \X_{e^*}\}, e^* \in \mathcal E^*$,
one can bound the first order term by Assumption~\ref{assump:second_order_algo} with $Z_{e^*,t} = \hat \nabla_t^\top \w_t - \hat \nabla_{e^*,t}$
    \[
    \sumT Z_{e^*,t} \leq \Xi_3\sqrt{\log(|\mathcal E|)\sumT Z_{e^*,t}^2} + \Xi_4\log(|\mathcal E|) \sup_{1\leq t \leq T} \|\hat \nabla_t\|_\infty.
    \]
Further, by Young's inequality, we have for every $\eta >0$
    \[
    \Xi_3 \sqrt{\log(|\mathcal E|) \sumT  Z_{e^*,t}^2} \leq \frac{\Xi_3^2 \log(|\mathcal E|)}{2\eta} + \frac{\eta}{2}\sumT  Z_{e^*,t}^2.
    \]
And in bound \eqref{eq:bound_expert_R1} we have %, conditionally on $\{X_t \in \X_{e^*}\}$,
\begin{multline}
\label{eq:bound_first_order_R1}
\sumT Z_{e^*,t} + \frac{\gamma}{2}Z_{e^*,t}^2 \leq \frac{\Xi_3^2 \log(|\mathcal E|)}{2\eta} + \Xi_4\log(|\mathcal E|) \sup_{1\leq t \leq T} \|\hat \nabla_t\|_\infty \\ +\frac{\eta+\gamma}{2} \sum_{e^* \in \mathcal E^*} \sumT  Z_{e^*,t}^2.
\end{multline}
Further, since $\hat g_t$ satisfies Assumption~\ref{assump:subexp} with some parameters $(\nu,\mu)$, we have $\sup_{1 \le t \le T} \|\hat \nabla_t\|_\infty = \hat B_\infty \sup_{1 \le t \le T} |\hat g_t|$ that can be bounded, with probability $1-\delta$, as 
\begin{equation}
\label{eq:sup_nabla}
\sup_{1 \le t \le T} \|\hat \nabla_t\|_\infty \le \tau \hat B_\infty
\end{equation}
because $\forall 1 \le t \le T, g_t = \clip(g_t,\tau)$ with probability $1-\delta$ as with $\tau = G + O((\nu \vee \mu)\log(T/\delta))$ under Assumption~\ref{assump:subexp}. Then, with $Z_{e^*,t} = \nabla_t^\top \w_t - \hat \nabla_{e^*,t} = \hat g_t (\hat f_t(X_t) - \hat f_{e^*,t}(X_t))$, one has also with high probability $1-\delta$,
\[
\sumT Z_{e^*,t} = \sumT \clip(g_t,\tau) (\hat f_t(X_t) - \hat f_{e^*,t}(X_t)),
\]
Setting $\bar Z_{e^*,t} := \clip(g_t,\tau) (\hat f_t(X_t) - \hat f_{e^*,t}(X_t))$, we can use Corollary~\ref{cor:concentration_expectation}, with $0 \le (\bar Z_{e^*,t})^2 \le 4\tau^2 \hat B^2_\infty$ and we finally have with probability $1-2\delta$ (union bound)
\begin{equation}
\label{eq:concentration_order_2}
\sumT Z_{e^*,t}^2 \le 2\sumT \E_{t-1}[\bar Z_{e^*,t}^2] + 8\tau^2\hat B^2_\infty\log(\delta^{-1}).
\end{equation}
Plugging \eqref{eq:bound_first_order_R1}, \eqref{eq:sup_nabla} and \eqref{eq:concentration_order_2} in \eqref{eq:bound_expert_R1}, we get with probability $1-4\delta$ (union bound)
\begin{multline}
    \label{eq:bound_expert_R1_almost_done}
    R_1 \leq \frac{2\eta + 3\gamma - \alpha}{2} \sumT  \E_{t-1}[\bar Z_{e^*,t}^2] \\
    + \log(|\mathcal E|)\bigg(\frac{\Xi_3^2 }{2\eta} + \Xi_4 \tau \hat B_\infty \bigg) + 2\bigg(4\tau^2\hat B_\infty^2 + \frac{1}{\gamma}\bigg)\log(\delta^{-1}) 
\end{multline}
taking $0 < \gamma < \alpha$ so that $\tfrac{\gamma-\alpha}{2}\E_{t-1}[Z_{e^*,t}^2] \le \tfrac{\gamma-\alpha}{2}\E_{t-1}[\bar Z_{e^*,t}^2]$. Finally, taking $0 < \gamma < \tfrac{\alpha}{3}, \eta =  \frac{\alpha - 3\gamma}{2} > 0$ cancels the second order term and we have, with probability $1-4\delta$
\begin{equation}
    \label{eq:bound_expert_R1_final}
    R_1 \leq \log(|\mathcal E|)\bigg(\frac{\Xi_3^2 }{\alpha - 3\gamma} + \Xi_4 \tau \hat B_\infty \bigg) + 2\bigg(4\tau^2\hat B_\infty^2 + \frac{1}{\gamma}\bigg)\log(\delta^{-1}).
\end{equation}
with $\tau = G + O((\nu \vee \mu)\log(T/\delta))$

\item \textbf{Bound on $R_2$ via Theorem~\ref{th:param-free_high_prob}.} 
% We have
% \begin{equation*}
% R_{2}
% :=
% \sum_{e^* \in \mathcal E^*} \sum_{t=1}^T
% \Big(
% L_t(\hat f_{e^*,t}(X_t))
% -
% L_t(\hat f^*_{e^*}(X_t))
% \Big)
% \ind{X_t \in \X_{e^*}}
% =
% \sum_{e^* \in \mathcal E^*} \sum_{t\in T_{e^*}}
% L_t(\hat f_{e^*,t}(X_t))
% -
% L_t(\hat f^*_{e^*}(X_t))
% \end{equation*}
% where $T_{e^*} = \{1 \leq t \leq T : X_t \in \X_{e^*}\}$, and where $\hat f^*_{e^*}$ is the local wavelet predictor with oracle coefficients $c_{j,k}$. 
We can apply Theorem~\ref{th:param-free_high_prob} to bound $R_2$.
Recall that $\hat f^*_{e^*}$ is an oracle wavelet regressor (oracle coefficients, and oracle starting resolution) and
\[
\hat f_{e^*,t}
=
\underbrace{\sum_{k \in \bar \Lambda_{j_0^*}} \alpha_{j_0^*,k,t}\,\phi_k}_{\text{scaling part}} + \underbrace{\sum_{j=j_0^*}^{j_0^*+J} \sum_{k \in \Lambda_{j}} \beta_{j,k,t}\,\psi_{j,k}}_{\text{detail part}}.
\]
where $\alpha_{j_0^*,k,t}$ denotes scaling coefficients at time $t$, starting at $\alpha_{j,k,1}^* \in \mathcal A$, and $\beta_{j,k}, j\ge j_0^*$ are wavelet coefficients up to some optimized truncation level $J=\frac{S}{(2S+d)\kappa}\log(TB^2\sigma_0^{-2})$ (large enough to kill the approximation term).
Applying Theorem~\ref{th:param-free_high_prob} on coefficients $\{\alpha_{j,k},\beta_{j,k}\}$, with oracle clipping margin $\Delta_{e^*} \propto \max(\nu,\mu)\log(T) \in \calD$, with probability $1-2\delta$
\begin{multline*}
\sumT L_t(\hat f_{e^*,t}(X_t)) - L_t(\hat f^*_{e^*}(X_t)) \leq C \sum_{j,k} |c_{j,k}-c_{j,k,1}| \sqrt{\textstyle \sumT \E_{t-1}[\hat g_{j,k,t}^2]} \\ + O\Big(\Big(\sup_{1\le t \le T} \sum_{j,k} C_{j,k}(G_{j,k,t} + \max(\nu,\mu)\log(T))\ind{G_{j,k,t} > 0}\Big)^2\log(\delta^{-1})\Big)
\end{multline*}
where $c_{j,k}$ stands for either $\alpha_{j,k}$ or $\beta_{j,k}$ and $C$ is deduced from Theorem~\ref{th:param-free_high_prob_general} and can include $\log T$ terms.
We study the bound in 4 parts:
\begin{itemize}
    \item \underline{Scaling coefficients:} since the scaling coefficients at level $j_0^*$ starts on a grid of precision $\epsilon_{j_0^*} = B_\infty \|\phi\|_1(2^{j_0d/2}\sqrt T)^{-1}$ (the grid is of size $O(\sqrt T)$ for every level $j_0^*$)
\begin{align*}
\sum_{k \in \bar \Lambda_{j_0^*}} \underbrace{|\alpha_{j_0^*,k} - \alpha_{j_0^*,k,1}|}_{\le \frac{\epsilon_{j_0^*}}{2}}\sqrt{\sumT \E_{t-1}[\hat g_{j,k,t}^2]} &\leq \epsilon_{j_0^*} |\bar \Lambda_{j_0^*}|^{\frac{1}{2}}\sqrt{\sum_{k \in \bar \Lambda_{j_0^*}} \sumT \E_{t-1}[\hat g_{j,k,t}^2]} \\
&\leq \frac{B_\infty \|\phi\|_1 \lambda^{\frac{1}{2}}}{\sqrt T} \sqrt{M_\phi \|\phi\|_\infty T} = (\lambda M_\phi \|\phi\|_\infty)^{\frac{1}{2}}
\end{align*}
where we used Jensen's inequality, $|\bar \Lambda_{j_0^*}| \le \lambda 2^{j_0^*d}$, the form of the gradients $\hat g_{j,k,t} = \hat g_t \cdot \phi_k(X_t)$ and the fact that $\sum_k |\phi_k(X_t)|^2 \le \|\phi\|_\infty \cdot \|\sum_k |\phi(\cdot - k)|\|_\infty = \|\phi\|_\infty \cdot M_\phi < \infty$.
    \item \underline{Detail coefficients:} We use the same oracle as in \cite[Theorem~1]{liautaud2025minimax} that was designed for a control on the $L^\infty$ bias and works also for the $L^2$-norm. We have a linear part and non linear part in the wavelet oracle: we keep all the detail coefficients in between $[j_0^*,j_0^* + J^*]$ up to some level $j_0^*+J^*$ and then retain only greatest coefficients $\beta_{j,k}$ from level $[j_0^*+J^*, J]$.
    \begin{itemize}
        \item \underline{Linear part: $j = j_0^*,\dots,J^*+j_0^*$} For the linear part, denoting $\betabold_j = (\beta_{j,k})_{k \in \Lambda_j}, j=j_0^*,\dots j_0^*+J$, one has
\begin{align*}
\sum_{j = j_0^*}^{j_0^* + J^*} &\sum_{k \in \Lambda_{j}} |\beta_{j,k}|\sqrt{\textstyle \sumT \E_{t-1}[|\hat g_{j,k,t}|^2]} \\
&\leq \sum_{j = j_0^*}^{j_0^* + J^*}\|\boldsymbol{\beta}_j\|_p  |\Lambda_j|^{(\frac{1}{2}-\frac{1}{p})_+} \textstyle \sqrt{\sumT \sum_{k \in \Lambda_j(e^*)} \E_{t-1}[|\hat g_{j,k,t}|^2]} \\
&\le \sum_{j = j_0^*}^{j_0^* + J^*} \|\boldsymbol{\beta}_j\|_p |\Lambda_j|^{(\frac{1}{2}-\frac{1}{p})_+}  \textstyle \sqrt{\sumT \E_{t-1}[|\hat g_t|^2] \|\sum_{k \in \Lambda_j} |\psi_{j,k}(\cdot)|^2\|_\infty} \\
&\leq \sum_{j = j_0^*}^{j_0^* + J^*} \|\boldsymbol{\beta}_j\|_p |\Lambda_j|^{(\frac{1}{2}-\frac{1}{p})_+}  2^{\frac{jd}{2}} \textstyle \sqrt{M_\psi \|\psi\|_\infty \sumT \E_{t-1}[|\hat g_t|^2] } \\
&\leq \lambda^{\frac{1}{2}}\|f\|_{\B} {\textstyle \sqrt{M_\psi \|\psi\|_\infty \sumT \E_{t-1}[|\hat g_t|^2]}}
\sum_{j = j_0^*}^{j_0^* + J^*} 2^{-j\beta}
\end{align*}
where first inequality is by Hölder's inequality (with $p\ge 1$) and convexity on $\sum_k$, and last inequality applies Hölder's inequality on $\sum_j$ (with $q \ge 1$), $|\Lambda_j|\leq \lambda 2^{jd}$ and $M_\psi = \|\sum_k |\psi(\cdot-k)|\|_\infty < \infty$ and $\|f\|_\B$ is the Besov norm of the function $f$ defined in \eqref{eq:besov_norm} and we defined $\beta = s-\frac{d}{p} -(\frac{d}{2}-\frac{d}{p})_+$.
Further, we have for every $t\ge 1$
\[
\E_{t-1}[\hat g_t^2] = 4(\E_{t-1}[(\hat f_t(X_t) - f(X_t))^2] + \sigma^2) = 4(L_t(\hat f_t(X_t)) - L_t(f(X_t)) + \sigma^2)
\]
and plugging this in the previous bound gives 
\[
C \|f\|_{\B} \textstyle \Big(\sqrt{\sumT (L_t(\hat f_t(X_t)) - L_t(f(X_t)))} + \sigma \sqrt{T}\Big)
\sum_{j = j_0^*}^{j_0^* + J} 2^{-j\beta}.
\]
where $C$ can be deduced from the last lines.

\item \underline{Detail coefficients in the nonlinear part:} we consider only a sparse representation of the function in levels $[j_0^* + J^*,J]$, and we keep only an oracle set of coefficients $\Lambda^* = \{(j,k), J^* \le j \le J, \text{greatest coefficients } |\beta_{j,k}|\}$. Similarly to \cite[Theorem~1]{liautaud2025minimax}, retaining only $|\Lambda^*| = 2^{(j_0^*+J^*)d}$ gives a regret of the same order of that of the linear part above. Indeed, with $\Lambda^* = \cup_{j=J^*+j_0^*+1}^J \Lambda^*_j$ where $|\Lambda^*_j| \leq |\Lambda_j|$ is now the oracle sparse set made of positions $k$ at level $j$. One has on the levels $j = J^*+j_0^*+1,\dots,J$,
\begin{multline*}
    C \sum_{J^*+j_0^*<j\leq J} \|\betabold_j\|_{p}\ |\Lambda_j^*|^{(\frac{1}{2} - \frac{1}{p})_+} \sqrt{\textstyle \sum_{k\in \Lambda^*_j} \sumT \E_{t-1}[|\hat g_{j,k,t}|^2]} \\
    \leq C \textstyle\Big(\sqrt{\sumT (L_t(\hat f_t(X_t)) - L_t(f(X_t)))} + \sigma \sqrt{T}\Big) \sum_{J^*+j_0^*<j\leq J}\|\betabold_j\|_p|\Lambda_j^*|^{(\frac{1}{2}-\frac{1}{p})_+}2^{\frac{dj}{2}}
\end{multline*}
Then, using Hölder's inequality with $q \ge 1$ one has
\begin{align*}
\sum_{J^*+j_0^*<j\leq J}2^{\frac{dj}{2}}\|\betabold_j\|_p|\Lambda_j^*|^{(\frac{1}{2}-\frac{1}{p})_+} &= \sum_{J^*+j_0^*<j\leq J}2^{j(s+\frac{d}{2}-\frac{d}{p})}\|\betabold_j\|_p \cdot 2^{-j(s-\frac{d}{p})}|\Lambda_j^*|^{(\frac{1}{2}-\frac{1}{p})_+}  \\
&\leq \|f\|_{\B} \sum_{J^*+j_0^*<j\le J} 2^{-j(s-\frac{d}{p})}|\Lambda_j^*|^{(\frac{1}{2}-\frac{1}{p})_+}.
\end{align*}
Finally, since $\sum_{J^*<j\le J} |\Lambda_j^*| = |\Lambda^*|$, one has
\begin{align*}
\sum_{J^*+j_0^*<j\le J} 2^{-j(s-\frac{d}{p})}|\Lambda_j^*|^{(\frac{1}{2}-\frac{1}{p})_+} &\leq |\Lambda^*|^{(\frac{1}{2}-\frac{1}{p})_+} \sum_{J^*+j_0^*<j\le J} 2^{-j(s-\frac{d}{p})} \\
&\leq |\Lambda^*|^{(\frac{1}{2}-\frac{1}{p})_+} \frac{2^{-(J^*+j_0^*)(s-\frac{d}{p})}}{2^{s-\frac{d}{p}}-1},
\end{align*}
by Hölder's inequality in the case $p \ge 2$ and since $s > \frac{d}{p}$ and taking $|\Lambda^*| = 2^{d(J^* +j_0^*)}$ entails a nonlinear regret of 
\[
% C \|f\|_\B\textstyle\Big(\sqrt{\sumT (L_t(\hat f_t(X_t)) - L_t(f(X_t)))} + \sigma \sqrt{T}\Big) \sum_{J^*+j_0^*<j\leq J}\|\betabold_j\|_p|\Lambda_j^*|^{(\frac{1}{2}-\frac{1}{p})_+}2^{\frac{dj}{2}} \leq
C \|f\|_\B\textstyle\Big(\sqrt{\sumT (L_t(\hat f_t(X_t)) - L_t(f(X_t)))} + \sigma \sqrt{T}\Big) 2^{-(J^* + j_0^*)(s-\frac{d}{p}-(\frac{d}{2}-\frac{d}{p})_+)}
\]
which is of the same order of the regret for the linear part. 
\end{itemize}

\item \underline{Deviation term:} 
Let $\varphi$ denote either the scaling basis function $\phi$ or the wavelet $\psi$. With $G_{j,k,t} = G\, |\varphi_{j,k}(X_t)| = G 2^{\frac{jd}{2}} |\varphi(2^{j}X_t - k)|$ and $C_{j,k} = B2^{-\frac{jd}{2}}$ at any time $t \ge 1$, one has
\[
\sup_{1 \le t \le T} \sum_{j,k} |C_{j,k} G_{j,k,t}| \leq G B \sup_{x} \sum_{j,k} |\varphi(x - k)| \leq \sum_{j=j_0^*}^{j_0^*+J} \sup_x \sum_{k \in \Lambda_{j}} |\varphi(x-k)| %\leq LB \sum_{n\in \Lambda_{e^*}} \sum_{j=j_0(n)}^{J+j_0(n)} \bigg(\sum_{k \in \Lambda_{e^*,j}}} |\varphi(x - k)|\bigg)\ind{j=j(n)} 
\leq GB M_\varphi (J+1),
\]
and setting $N_\varphi = \sup_x \sum_{k} \ind{|\varphi(x - k)|>0} < \infty$ the number of wavelets involved at each scale (of order $S^d$)
\[
\sup_{1 \leq t \leq T} \sum_{n\in \Lambda_{e^*}} C_n\ind{G_{n,t}>0} \leq B  \sum_{j=j_0(e^*)}^{j_0(e^*)+J} 2^{-\frac{jd}{2}} \sup_x \sum_{k \in \Lambda_{e^*,j}}  \ind{|\varphi(x-k)|>0} \leq  \frac{B N_\varphi}{1 - 2^{-d/2}}.
\]

Finally the deviation term is of order 
\[
D := O\bigg(\Big(GBM_\varphi(J+1) + \frac{B N_\varphi}{1 - 2^{-d/2}}\max(\nu, \mu)\log(T)\Big)^2\log(\delta^{-1})\bigg),
\]
where $J$ will be of order $\log T$.
\item \underline{Conclusion on $R_2$:} One has with probability $1-2\delta$, summing all the previous bounds:
\[
R_2 \le C\|f\|_\B \textstyle \Big(\sqrt{\sumT (L_t(\hat f_t(X_t)) - L_t(f(X_t)))} + \sigma \sqrt{T}\Big)
\sum_{j = j_0^*}^{j_0^* + J} 2^{-j\beta} + D
\]
\end{itemize}
\end{enumerate}

\paragraph{Step 2: Bound on the approximation term.}
Recall $\Lambda^*$ denotes the set of indices corresponding to the $|\Lambda^*|$ largest wavelet coefficients (in absolute value) among all $(v_{j,k}=\beta_{j,k}2^{j(s+d/2-d/p)})$ with $j \in [J^*+j_0^* + 1, J]$ and $k \in \Lambda_j$. Let $j > J^*+j_0^*$. We have that
\begin{multline*}
\label{eq:bound_lp}
   \sum_{J^*+j_0^* < j \leq J}\sum_{k \in \Lambda_j} |v_{j,k}|^p \leq (J-(J^*+j_0^*))^{(1-\frac{p}{q})_+}\bigg(\sum_{J^*+j_0^* < j \leq J}\bigg(\sum_{k\in \Lambda_j}|v_{j,k}|^p\bigg)^{\frac{q}{p}}\bigg)^{\frac{p}{q}} \\
   \leq \big[(J-(J^*+j_0^*))^{(\frac{1}{p}-\frac{1}{q})_+}\|f\|_{\B}\big]^p =: C_f^p,
\end{multline*}
and
\begin{equation*}
%\label{eq:bound_beta_nonlinear}
|\Lambda^*|\cdot \min_{(j,k)\in \Lambda^*} |v_{j,k}|^p \leq \sum_{(j,k)\in \Lambda^*} |v_{j,k}|^p \leq C_f^p < \infty.
\end{equation*}
In particular since $\forall (j,k) \not \in \Lambda^*, |v_{j,k}| \leq \min_{(j',k')\in \Lambda^*} |v_{j',k'}|$ one has
\begin{equation}
\label{eq:bound_beta_nonlinear}
\forall (j,k) \not \in \Lambda^*, \; |\Lambda^*| |v_{j,k}|^p \leq C_f^p \implies \forall (j,k) \not \in \Lambda^*, \; |\beta_{j,k}|\leq C_f 2^{-j(s+d/2-d/p)}|\Lambda^*|^{-\frac{1}{p}}.
\end{equation}
And, by Parseval's inequality, with $s'= d-d/p+d/2$
\begin{align}
\|\hat f^* - f\|_2^2  &= \sum_{(j,k) \not\in \Lambda^*} |\beta_{j,k}|^2 + \sum_{j> J} |\beta_{j,k}|^2 \\
&\leq C_f^2 |\Lambda^*|^{-\frac{2}{p}} \sum_{j=J^*+j_0^*+1}^{J} \sum_k  2^{-2js'} + \sum_{j >J} \sum_k \|f\|_\B^22^{-2js'}
 \qquad \leftarrow \text{by \eqref{eq:bound_beta_nonlinear}} \notag \\ 
&\leq  C_f^2 |\Lambda^*|^{-\frac{2}{p}} \frac{2^{-2(J^*+j_0^*)(s-\frac{d}{p})}}{2^{(s-\frac{d}{p})}-1} + \|f\|_\B^2 \frac{2^{-2J(s-\frac{d}{p})}}{2^{(s-\frac{d}{p})}-1} \qquad \leftarrow \text{ replacing $s'$ and with $s > \frac{d}{p}$.} \notag 
\\
&= \frac{1}{2^{s-\frac{d}{p}}-1} \big(C_f^2 2^{-2J^*s} +  \|f\|_\B^2 2^{-2J(s-d/p)}\big) \qquad \leftarrow |\Lambda^*|=2^{j_0^*+J^*}
\label{eq:bound_non_linear_approx}
\end{align}
Finally one has
\begin{equation}
\label{eq:expected_approx}
R_3 := T\|\hat f^*_{e^*} - f\|_2^2 \leq C \|f\|_\B^2 T 2^{-2(J^*+j_0^*)s} + \frac{\|f\|^2_\B}{2^{s-d/p}} 2^{-2J(s-d/p)}
\end{equation}
% where $\hat f^* = f_{(J^*+j_0^*)} + f_{\Lambda^*}\ind{p<2}$ and 
where $C \leq (J-(J^*+j_0^*))^{2(\frac{1}{p} - \frac{1}{q})_+}/(2^{2(s-d/p)}-1)$. 
In the next section, we take $J$ high enough so that the bias terms in $2^{-2J(s-d/p)}$ vanishes.

\paragraph{Step 3: Optimization of the resolutions $J$, $J^*$ and $j_0^*$.}

Summarizing the previous steps, we have with probability at least $1-6\delta$ (via a union bound over $R_1, R_2$, and the concentration terms):
\[
R_T(f) \le R_1 + R_2 + R_3 + D
\]
where $R_1 = O((\log|\mathcal{E}| + \log^2 T\log\delta^{-1})$ is the expert regret and $D=O(\log^2 T \log\delta^{-1})$ is the deviation term in estimating the wavelet coefficients. From here, we assume that $J >0$ is high enough so that the term $2^{-2J(s-d/p)}$ in $R_3$ vanishes.\\
According to the sign of $\beta$, one has, with $M^* = j_0^* + J^*$ the maximum resolution,
\begin{itemize}
    \item Case $\beta > 0$
    \begin{itemize}
        \item  $\beta = s - d/2 > 0$ and $p > 2$, i.e. $s > d/2$. 
        One has $\sum_{j=j_0^*}^{j_0^* + J^*} 2^{-\beta j} \leq C' 2^{-j_0^*(s-d/2)}$ and \[R_T(f) \leq C'2^{-j_0^*(s-\frac{d}{2})} \|f\|_{\B} \bigg( \sqrt{R_T(f)} + \sigma \sqrt{T}\bigg) + C \|f\|_{\B}^2 T 2^{-2(j_0^*+ J^*)s} + (R_1 + D) \]
        \item $\beta = s - d/p > 0$ and $p < 2$.
         One has $\sum_{j=j_0^*}^{j_0^*+ J^*} 2^{-\beta j} + 2^{-(s-\frac{d}{p})(j_0^*+ J^*)} \leq C'2^{-j_0^*(s-d/p)}$ and \[R_T(f) \leq C' 2^{-j_0^*(s-d/p)}\|f\|_{\B} \bigg( \sqrt{R_T(f)} + \sigma \sqrt{T}\bigg) + C \|f\|_{\B}^2 T 2^{-2(j_0^*+ J^*)s} + (R_1 + D)\]
    \end{itemize}
    \item $\beta = s-d/2 = 0$ and $p \ge 2$. One has $\sum_{j=j_0^*}^{j_0^*+ J^*} 2^{-\beta j} \leq C'J^*$ and \[R_T(f) \leq C'J^*\|f\|_{\B} \bigg( \sqrt{R_T(f)} + \sigma \sqrt{T}\bigg) + C \|f\|_{\B}^2 T 2^{-2(j_0^*+ J^*)s} + (R_1 + D) \]
    \item $\beta = s-d/2 < 0$ and $p > 2$, i.e. $s < d/2$. 
        One has $\sum_{j=j_0^*}^{j_0^*+ J^*} 2^{-j\beta} \leq C' 2^{-(j_0^*+ J^*)(s-\frac{d}{2})}$ 
        \[
        R_T(f) \leq C' \|f\|_{\B} 2^{-(j_0^*+J^*)(s-\frac{d}{2})} \bigg( \sqrt{R_T(f)} + \sigma \sqrt{T}\bigg) + C \|f\|_{\B}^2 T 2^{-2(j_0^*+ J^*)s} + (R_1 + D)
        \]
\end{itemize}

Let $U = \sqrt{R_T(f)}$. In all regimes, the bound satisfies a quadratic inequality $U^2 \le AU + A\sigma\sqrt{T} + R_3 + (R_1+D)$. Applying Young's inequality ($AU \le \frac{1}{2}U^2 + \frac{1}{2}A^2$), we obtain:
\begin{equation}
R_T(f) \le A^2 + 2A\sigma\sqrt{T} + 2R_3 + 2(R_1 + D). \label{eq:master_bound}
\end{equation}

In each case, the terms $A^2 + 2A\sigma\sqrt{T} + 2R_3$ dominates and are of opposite variation. We set the maximal resolution $M^* = j_0^* + J^*$ an we have in each case:

\begin{itemize}
    \item \textbf{Case $\beta = s - d/2 < 0$:} In this regime ($s < d/2$), the variance term is dominated by the fine-scale resolution $M^*$. With $A = C'\|f\|_{\mathcal{B}} 2^{-M^*(s-\frac{d}{2})}$, the regret is of order:
    \[
    \|f\|_{\mathcal{B}}^2 2^{-2M^*(s-\frac{d}{2})} + \|f\|_{\mathcal{B}} 2^{-M^*(s-\frac{d}{2})}\sigma \sqrt{T} + \|f\|_{\mathcal{B}}^2 T 2^{-2M^*s}.
    \]
    Balancing the variance and bias by setting $\sigma\|f\|_{\mathcal{B}}\sqrt{T} 2^{-(s-\frac{d}{2})M^*} \asymp T \|f\|^2_{\mathcal{B}} 2^{-2M^*s}$ yields the optimal resolution:
    \[
    M^* = \left\lceil \frac{1}{2s+d}\log_2\big(T \|f\|^2_{\mathcal{B}}\sigma^{-2}\big)\right \rceil
     \leq J \]
    where we assume $\sigma^2 \le T$.
    Plugging this back into the bound gives:
    \[
    R_T(f) \leq O\bigg(\|f\|_{\mathcal{B}}^{\frac{2d}{2s+d}} (\sigma^2)^{\frac{2s}{2s+d}} T^{\frac{d}{2s + d}}\bigg) + 2(R_1 + D).
    \]
    Note that $A^2 = O(T^{\frac{d-2s}{2s+d}})$, which is strictly smaller than $O(T^{\frac{d}{2s+d}})$.

    \item \textbf{Case $\beta = 0$:} For $s = d/2$, the variance term $A$ scales with the number of levels, $A = C'\|f\|_{\mathcal{B}} J^*$. Setting $J^* \propto \frac{1}{d}\log_2(T) \leq J$ and choosing $j_0^* = M^* - J^*$, with $M^*$ as defined above, ensures the optimal regret up to a logarithmic factor:
    \[
    R_T(f) \leq O\bigg(\|f\|_{\mathcal{B}}^{\frac{2d}{2s+d}} (\sigma^2)^{\frac{2s}{2s+d}} T^{\frac{d}{2s + d}}\log_2(T)\bigg) + 2(R_1 + D) = O\left( \sqrt{T} \log_2(T) \right).
    \]

    \item \textbf{Case $\beta > 0$:} This regime corresponds to $s - d/2 > 0$ (or $s - d/p > 0$ for $p < 2$). With $A = C' 2^{-j_0^*(s-d/2)}\|f\|_{\mathcal{B}}$, the dominant terms are:
    \[
    \|f\|_{\mathcal{B}}^2 2^{-2j_0^*(s-d/2)} + \|f\|_{\mathcal{B}} 2^{-j_0^*(s-d/2)} \sigma \sqrt{T} + \|f\|_{\mathcal{B}}^2 T 2^{-2(j_0^*+ J^*)s}.
    \]
    Since $s > d/2$, all terms decrease with $j_0^* \in [0,J_0]$. Here, the variance term $2A\sigma\sqrt{T}$ dominates $R_3$ when $j_0^*$ is small. To achieve the minimax rate, we can choose $j_0^*=$ (sup on the grid of starting scale) to suppress the $\sqrt{T}$ noise growth, with $j_0^*=\frac{1}{s-\frac{d}{2}}
\log_2\big(
\|f\|_{\mathcal B}^{\frac{2s}{2s+d}}
\,\sigma^{\frac{d}{2s+d}}
\,T^{\frac{s-\frac{d}{2}}{2s+d}}\big) \propto \frac{1}{2s+d}\log_2(T)$
    and we get the same rate as before. 
    \item \textbf{Maximum scale $J$:} In $R_3$ also appears a bias, so we have to ensure that we take a maximum scale $J$ that is high enough so that the bias term $O(T2^{-2J(s-d/p)})$ vanishes. To control the bias term, we choose a resolution level $J$, assuming $\|f\|_\B \le B \leq \sqrt{T}$ and $\sigma \ge \sigma_0 >0$.
\[
J \;=\;  \left\lceil \frac{S}{(2S+d)\,\kappa}\,\log_2 (TB^2\sigma_0^{-2}) \right\rceil \ge \left\lceil \frac{S}{(2S+d)\,\kappa}\,\log_2 (T\|f\|^2_\B\sigma^{-2}) \right\rceil \ge 1
\]
assuming $\|f\|_\B \ge \frac{1}{\sqrt{T}}$ and $\sigma \ge \sqrt{T}$. This choice of maximum scale $J$ ensures that
\[
\|f\|^2_\B T\,2^{-2J(s-d/p)} \;\le\; T\,2^{-2J\kappa}
\;\le \; O(T^{\frac{d}{2s+d}}).
\]

\end{itemize}

\paragraph{Conclusion.} Summing with $R_1 + D$ and dividing all the preceding rates by $T$ concludes the proof.
% {\color{red} Case $\sigma = 0$, on paie seulement le biais d'approximation comme dans \cite[Eq. (3.4)]{devore2025optimal}}
\end{proof}

\newpage

\section{Technical lemmas}

We recall several standard concentration inequalities and technical lemmas that are used in the proofs.

\begin{lemma}[Exponential inequality]
\label{lemma:exp_inequality}
Let $Z$ be a positive and bounded random variable such that $0 \le Z \leq D$ for some $D >0$. Then, for every $0 \le \eta < 1 / D$ one has
\[\E\bigg[\exp\bigg(\eta Z - \frac{\eta \E[Z]}{1 - \eta D}\bigg)\bigg] \leq 1.\]
\end{lemma}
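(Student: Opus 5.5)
We need to prove $\E[\exp(\eta Z - \eta\E[Z]/(1-\eta D))]\le 1$ for $0\le Z\le D$ and $0\le\eta<1/D$. Since $\E[Z]/(1-\eta D)$ is deterministic, it suffices to show
\[
\E[e^{\eta Z}] \le \exp\Big(\frac{\eta\,\E[Z]}{1-\eta D}\Big).
\]
The plan is to bound $e^{\eta Z}$ pointwise by an expression linear in $Z$, take expectations, and then apply $1+x\le e^x$.

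\textbf{Pointwise linear bound.} Write $e^{\eta Z} = 1 + \eta Z + \sum_{k\ge 2}\frac{(\eta Z)^k}{k!}$. For $k\ge 2$, use $0\le Z\le D$ to get $Z^k \le Z D^{k-1}$, and use $k!\ge 1$. This gives
\[
e^{\eta Z}\le 1+\eta Z\sum_{k\ge 1}(\eta D)^{k-1} = 1+\frac{\eta Z}{1-\eta D},
\]
where the geometric series converges because $\eta D<1$.

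An alternative route is convexity of $z\mapsto e^{\eta z}$ on $[0,D]$, which gives $e^{\eta Z}\le 1 + \frac{Z}{D}(e^{\eta D}-1)$. One then checks $(e^{x}-1)/x \le 1/(1-x)$ for $x=\eta D\in[0,1)$. The series argument avoids this extra check, so I prefer it.

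\textbf{Taking expectations.} Taking expectations and using $1+x\le e^x$,
\[
\E[e^{\eta Z}]\le 1+\frac{\eta\,\E[Z]}{1-\eta D}\le \exp\Big(\frac{\eta\,\E[Z]}{1-\eta D}\Big).
\]
Multiplying both sides by $\exp(-\eta\E[Z]/(1-\eta D))$ gives the claim. The case $\eta=0$ is trivial.

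\textbf{Main difficulty.} The only delicate point is the pointwise inequality, specifically replacing $Z^k$ by $ZD^{k-1}$ so that the bound stays linear in $Z$. This requires $Z\ge 0$. No conditioning subtleties arise, and the same argument applies verbatim with $\E_{t-1}$ in place of $\E$, as is needed in the applications.
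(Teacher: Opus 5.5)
Your proof is correct and follows essentially the same route as the paper: expand the exponential as a series, use $Z^k \le Z D^{k-1}$ (valid since $0\le Z\le D$) so the bound is linear in $Z$, sum the series to obtain the factor $1/(1-\eta D)$, and finish with $1+x\le e^x$. The differences are cosmetic. You bound $e^{\eta Z}$ pointwise and use $k!\ge 1$ to get a geometric series directly. The paper instead expands $\E[e^{\eta Z}]$ term by term and passes through $\sum_{k}(\eta D)^k/k!$ before bounding it by $1/(1-\eta D)$.
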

\begin{proof}
    Let $\eta >0$. Expanding the exponential, one has by linearity of the expectation
    \begin{align*}
    \E[\exp(\eta Z)] = \E\bigg[ \sum_{k\ge 0} \frac{\eta^k Z^k}{k!}\bigg] &= 1 + \sum_{k\ge 1} \frac{\eta^k}{k!} \E[Z^k] \underset{0 \le Z \le D}{\leq} 1 +\eta \E[Z]\sum_{k\ge 0}\frac{(\eta D)^k}{k!} %+\sum_{k\ge 2} \frac{\eta^k}{k!} \E[Z^k]
    % \\
    %&\leq  1 + \eta \E[Z]+ \frac12 \eta^2 \E[Z^2] \sum_{k\ge 2} (\eta D)^{k-2}\\ 
    \underset{\eta < 1/D}{\leq} 1 + \frac{\eta \E[Z]}{1 - \eta D}.
    \end{align*}
    Then, using $1 + x \leq \exp(x)$ and dividing by $\exp(\eta \E[Z]/(1 - \eta D))$ we get the desired result.
\end{proof}

\begin{lemma}
\label{lemma:ville_freedman}
    Let $(Z_t)$ be a process adapted to the filtration $(\mathcal F_t)$ such that $\E_{t-1}[\exp(Z_t)] \leq 1$. Then, one has for any $\delta \in (0,1)$
    \[\mathbb P\big(\exists T \ge 1 : \textstyle \sumT Z_t > \log(\delta^{-1})\big) \leq \delta.\]
\end{lemma}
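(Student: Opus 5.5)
This is Ville's maximal inequality applied to the exponential supermartingale built from the partial sums; the bound is anytime, holding uniformly over $T$. Set
\[
M_0 = 1,\qquad M_T = \exp\Big(\textstyle\sum_{t=1}^T Z_t\Big), \quad T\ge 1 .
\]

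The first step is to check that $(M_T)$ is a nonnegative supermartingale with respect to $(\mathcal F_T)$. Each $M_T$ is $\mathcal F_T$-measurable because $(Z_t)$ is adapted, and $M_T>0$. By the tower property, $M_{T-1}$ is $\mathcal F_{T-1}$-measurable, so
\[
\E_{T-1}[M_T] = M_{T-1}\,\E_{T-1}[\exp(Z_T)] \le M_{T-1}.
\]
Iterating gives $\E[M_T]\le 1$ for every $T$.

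The second step is Ville's inequality: for a nonnegative supermartingale with $\E[M_0]\le 1$,
\[
\mathbb P(\exists T: M_T \ge 1/\delta)\le \delta .
\]
I would prove it directly rather than cite it. Define the stopping time $\tau = \inf\{T\ge 1 : M_T > 1/\delta\}$. For each finite horizon $n$, the optional stopping theorem for bounded stopping times applied to $\tau\wedge n$ gives $\E[M_{\tau\wedge n}]\le 1$. On $\{\tau\le n\}$ we have $M_{\tau\wedge n} > 1/\delta$, so Markov's inequality yields $\mathbb P(\tau\le n)\le \delta$. Letting $n\to\infty$ and using monotone continuity of probability gives $\mathbb P(\tau<\infty)\le\delta$.

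The conclusion is then immediate: the event $\{\exists T: \sum_{t\le T} Z_t > \log(\delta^{-1})\}$ coincides with $\{\tau<\infty\}$.

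The main obstacle is the uniformity over $T$, which rules out a plain Markov inequality at a fixed $T$. The stopping-time truncation above handles it. The only thing to watch is that $\E_{t-1}[\exp(Z_t)]$ is well defined, which holds because the variable is nonnegative.
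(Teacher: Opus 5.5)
Your proof is correct and follows the standard route: the paper states this lemma without proof as a recalled fact, namely Ville's maximal inequality for the nonnegative supermartingale $M_T=\exp(\sum_{t=1}^T Z_t)$, and your optional-stopping argument on $\tau\wedge n$ followed by $n\to\infty$ is the usual derivation of that inequality. One cosmetic slip: pulling $M_{T-1}$ out of $\E_{T-1}$ uses the ``taking out what is known'' property (valid here by nonnegativity), not the tower property.
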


\begin{corollary}
\label{cor:concentration_expectation} Let $0 \le Z_t \leq D$. Then $\tilde Z_t = \frac{Z_t}{2D}$ satisfies $\E_{t-1}[\exp(\tilde Z_t - 2 \E_{t-1}[\tilde Z_t])] \leq 1$ and with probability $1-\delta$
\[
\sumT Z_t \leq \sumT 2\E_{t-1}{Z_t} + 2D\log(\delta^{-1}).
\]
\end{corollary}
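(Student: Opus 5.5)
The plan is to combine the two preceding results directly: Lemma~\ref{lemma:exp_inequality}, applied conditionally on $\mathcal F_{t-1}$, gives the exponential supermartingale increment, and Lemma~\ref{lemma:ville_freedman} then turns it into a high-probability bound. The only choice to make is the value of $\eta$.

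\textbf{Step 1 (choice of $\eta$).} I would apply Lemma~\ref{lemma:exp_inequality} to the variable $Z_t$ under the conditional law $\mathbb P_{t-1}$. This is legitimate because the lemma's proof only uses $0\le Z\le D$ and linearity of expectation, both of which hold conditionally. I take $\eta = \frac{1}{2D}$, which satisfies $0\le \eta<1/D$. Then $1-\eta D = \tfrac12$, so
\[
\frac{\eta\,\E_{t-1}[Z_t]}{1-\eta D} = \frac{\E_{t-1}[Z_t]}{D} = 2\,\E_{t-1}[\tilde Z_t],
\qquad \eta Z_t = \tilde Z_t .
\]
The lemma therefore yields exactly $\E_{t-1}\big[\exp(\tilde Z_t - 2\E_{t-1}[\tilde Z_t])\big]\le 1$, which is the first claim.

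\textbf{Step 2 (concentration).} Set $W_t := \tilde Z_t - 2\E_{t-1}[\tilde Z_t]$. This process is adapted to $(\mathcal F_t)$, since $\E_{t-1}[\tilde Z_t]$ is $\mathcal F_{t-1}$-measurable, and by Step~1 it satisfies $\E_{t-1}[\exp(W_t)]\le 1$. Lemma~\ref{lemma:ville_freedman}, which is Ville's inequality applied to the nonnegative supermartingale $\exp(\sum_{s\le t} W_s)$, then gives, with probability at least $1-\delta$, $\sum_{t=1}^T W_t\le \log(\delta^{-1})$. This bound holds simultaneously for all $T$, so in particular for the fixed horizon.

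\textbf{Step 3 (rescaling).} Multiplying the inequality by $2D>0$ gives
\[
\sumT Z_t - 2\sumT \E_{t-1}[Z_t] \le 2D\log(\delta^{-1}),
\]
which is the stated bound.

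I do not expect a real obstacle. The one point to check is that Lemma~\ref{lemma:exp_inequality} transfers to conditional expectations: the series expansion and the bound $\E_{t-1}[Z_t^k]\le D^{k-1}\E_{t-1}[Z_t]$ go through verbatim under $\mathbb P_{t-1}$. The corollary is used in the paper with $D^2$ in place of $D$ (for $Z_t^2\le D^2$), where it gives $2D^2\log(\delta^{-1})$. That is consistent with this statement once $D$ is read as the bound on the variable being summed.
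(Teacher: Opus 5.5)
Your proposal is correct and follows the paper's route. The paper gives no separate proof of this corollary, but the same argument appears in the proof of the second inequality of Lemma~\ref{lemma:concentration_second_order}: Lemma~\ref{lemma:exp_inequality} is applied conditionally with $\eta = 1/(2D^2)$ to $Z_t^2$ (your choice $\eta = 1/(2D)$ with $D$ replaced by $D^2$), and Lemma~\ref{lemma:ville_freedman} is then invoked, just as in your Steps~1--3.
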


\begin{lemma}[Second order concentration]
\label{lemma:concentration_second_order}
Let $\delta \in (0,1)$. For any random variables $(Z_t) \subset \R$ adapted to $(\mathcal F_{t})$ and any $\gamma > 0$, one has with probability $1-\delta$
\[
\sumT \E_{t-1}[Z_t] \leq \sumT Z_t + \frac{\gamma}{2}\big(\E_{t-1}[Z_t^2] + Z_t^2\big) + \frac{2}{\gamma}\log(\delta^{-1}).
\]
Moreover, if $(Z_t)$ are bounded as $|Z_t| \leq D, D >0$ one has with probability $1-2\delta$
\[
\sumT \E_{t-1}[Z_t] \leq \sumT Z_t + \frac{3\gamma}{2}\E_{t-1}[Z_t^2] + \bigg(2D^2 + \frac{2}{\gamma}\bigg)\log(\delta^{-1}).
\]
\end{lemma}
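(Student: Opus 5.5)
We plan to prove the first inequality through a self-normalized supermartingale and Ville's inequality (Lemma~\ref{lemma:ville_freedman}), and then deduce the bounded version from Corollary~\ref{cor:concentration_expectation} with a union bound.

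\textbf{First inequality.} Set $X_t := \E_{t-1}[Z_t] - Z_t$. It is $\mathcal F_t$-measurable and conditionally centered. We would use a Bercu--Touati type exponential inequality: for any centered $X$ with finite variance and any $\lambda>0$,
\[
\E\Big[\exp\Big(\lambda X - \tfrac{\lambda^2}{2}\big(X^2 + \E[X^2]\big)\Big)\Big] \le 1.
\]
This follows from the scalar inequality $\exp(x - x^2/2) \le 1 + x + x^2/2$ after taking expectations, using $\E X = 0$ and $1+u\le e^u$. Applied conditionally on $\mathcal F_{t-1}$, it shows that
\[
W_t := \lambda X_t - \tfrac{\lambda^2}{2}\big(X_t^2 + \E_{t-1}[X_t^2]\big)
\]
satisfies $\E_{t-1}[e^{W_t}]\le 1$. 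Lemma~\ref{lemma:ville_freedman} then gives, with probability $1-\delta$,
\[
\sumT X_t \le \tfrac{\lambda}{2}\sumT \big(X_t^2 + \E_{t-1}[X_t^2]\big) + \tfrac{1}{\lambda}\log(\delta^{-1}).
\]
Next we bound the quadratic terms in terms of $Z_t$. We have $\E_{t-1}[X_t^2] = \operatorname{Var}_{t-1}(Z_t)\le \E_{t-1}[Z_t^2]$. We also have $X_t^2 \le 2Z_t^2 + 2(\E_{t-1}Z_t)^2 \le 2Z_t^2 + 2\E_{t-1}[Z_t^2]$ by Jensen. Choosing $\lambda$ proportional to $\gamma$ then yields the claimed shape
\[
\sumT \E_{t-1}[Z_t] \le \sumT Z_t + c\,\gamma\big(Z_t^2 + \E_{t-1}[Z_t^2]\big) + \tfrac{c'}{\gamma}\log(\delta^{-1}).
\]
This step is where I expect the main (if mild) obstacle. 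The crude bound $X_t^2 \le 2Z_t^2+2\E_{t-1}Z_t^2$ loses constants, and these must be matched to $\gamma/2$ and $2/\gamma$. To get the exact constants I would first try a one-sided estimate: $\tfrac{\lambda^2}{2}X_t^2$ only needs to be compared with $\lambda\cdot\tfrac{\gamma}{2}(Z_t^2+\E_{t-1} Z_t^2)$ up to the factor absorbed by taking $\lambda=\gamma/2$, which yields the log term $2/\gamma$. Alternatively, I would apply the scalar bound $e^{x-x^2/2}\le 1+x+x^2/2$ directly to $x=\lambda(\E_{t-1}Z_t - Z_t)$ and expand the square there rather than afterwards. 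If exact constants prove elusive, changing them only affects universal factors, which are hidden in $O[\cdot]$ wherever the lemma is used.

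\textbf{Bounded case.} Now assume $|Z_t|\le D$, so $0\le Z_t^2\le D^2$. Corollary~\ref{cor:concentration_expectation} applied to $Z_t^2$ gives, with probability $1-\delta$,
\[
\sumT Z_t^2 \le 2\sumT \E_{t-1}[Z_t^2] + 2D^2\log(\delta^{-1}).
\]
Plugging this into the first inequality, the term $\tfrac{\gamma}{2}\sumT Z_t^2$ becomes at most $\gamma\sumT\E_{t-1}[Z_t^2] + \gamma D^2\log(\delta^{-1})$. This gives $\tfrac{3\gamma}{2}\sumT\E_{t-1}[Z_t^2]$ plus an extra deviation term $\gamma D^2\log(\delta^{-1})$. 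That term is at most $2D^2\log(\delta^{-1})$ in the relevant regime $\gamma\le 2$, which is the case for the uses $\gamma=\alpha/3$ in the paper. For large $\gamma$ one instead rescales the application of Corollary~\ref{cor:concentration_expectation}, or uses the trivial bound $\gamma\E_{t-1}[Z_t^2]$ directly. A union bound over the two events gives probability $1-2\delta$.
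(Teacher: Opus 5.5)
Your route is the paper's: the first inequality is the Bercu--Touati self-normalized bound, which the paper simply cites, and the bounded case comes from Corollary~\ref{cor:concentration_expectation} applied to $Z_t^2$ plus a union bound. The constants in the first part are not really an obstacle if you keep the exact conditional variance. With $m_t=\E_{t-1}[Z_t]$,
\[
X_t^2+\E_{t-1}[X_t^2]=Z_t^2+\E_{t-1}[Z_t^2]-2Z_tm_t\le 2\big(Z_t^2+\E_{t-1}[Z_t^2]\big),
\]
so $\lambda=\gamma/2$ gives exactly $\frac{\gamma}{2}\big(\E_{t-1}[Z_t^2]+Z_t^2\big)+\frac{2}{\gamma}\log(\delta^{-1})$.

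The genuine gap is your handling of $\gamma>2$ in the bounded case. You are right that the plug-in yields $\frac{3\gamma}{2}\sumT\E_{t-1}[Z_t^2]+\big(\gamma D^2+\frac{2}{\gamma}\big)\log(\delta^{-1})$. The paper's proof silently replaces $\gamma D^2$ by $2D^2$. Neither of your repairs works, however. Running Lemma~\ref{lemma:exp_inequality} with a general $\eta<D^{-2}$ gives $\sumT Z_t^2\le(1-\eta D^2)^{-1}\sumT\E_{t-1}[Z_t^2]+\eta^{-1}\log(\delta^{-1})$. Requiring both $\frac{\gamma}{2\eta}\le 2D^2$ and $\frac{\gamma}{2(1-\eta D^2)}\le\gamma$ forces $\gamma\le 2$. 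Nor can $Z_t^2$ be bounded pointwise by $\E_{t-1}[Z_t^2]$.

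In fact no argument can work, because the displayed bound is false for large $\gamma$; note it is not even scale invariant under $Z\to cZ$, $D\to cD$, $\gamma\to\gamma/c$. Take $T=1$, with $Z_1=-D$ with probability $p$ and $0$ otherwise, $\delta=p/3$, and $\gamma=1/(6pD)$. On $\{Z_1=-D\}$, an event of probability $3\delta$, the claim reduces to $1-p\le\frac14+(2D+12p)\log(3/p)$. This fails for $p=10^{-3}$ and $D=10^{-2}$.

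So what you and the paper actually prove is the bounded inequality with $\gamma D^2$ in place of $2D^2$. This agrees with the statement only for $\gamma\le 2$, and you should state it that way rather than claim the full range. Also, nothing in the paper guarantees $\gamma=\alpha/3\le 2$.
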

\begin{proof} 
\begin{itemize} 
\item First inequality: see \citet{bercu2008exponential} or \citet[Prop. 5]{wintenberger2024stochastic}.
\item Second inequality: Since $0 \le Z_t^2 \le D$, one has for every $t \ge 1$ by Lemma~\ref{lemma:exp_inequality} on $Z_t^2$, \[
\E_{t-1}[\exp(Z_t^2/(2D^2) - 2\E_{t-1}[Z_t^2/(2D^2)])] = \E_{t-1}[\exp(Z_t^2/(2D^2) - \E_{t-1}[Z_t^2/D^2])] \leq 1.
\]
Then, by Lemma~\ref{lemma:ville_freedman} on $(Z_t^2/(2D^2) - \E[Z_t^2/D^2])$, one has
\[\sumT Z_t^2 \leq \sumT 2\E[Z_t^2] + 2D^2\log(\delta^{-1}),\] 
which gives the desired bound with a union bound.
\end{itemize}
\end{proof}

\begin{lemma}[Sup-norm of functions and predictors.]
\label{lemma:predictor_bound}
Let $B >0, f \in \B^s_{pq}(B), s > \tfrac{d}{p}$ and diameters $C_{j,k} = B2^{-j\frac{d}{2}}$. Then, both $f$ and predictors $\hat f = \sum_{j,k} c_{j,k} \varphi_{j,k}, |c_{j,k}| \leq C_{j,k}$ are bounded in sup-norm as follows:
\[
\|f\|_\infty \leq B_\infty \qquad \text{and} \qquad \|\hat f\|_\infty \le \hat B_\infty \, ,
\]
with $B_\infty = \tfrac{BM_\varphi}{1-2^{-(s-\frac{d}{p})}}, \hat B_\infty \leq (J+1)BM_\varphi$ where $M_\varphi = \|\sum_k |\phi(\cdot - k)|\|_\infty \vee \|\sum_k |\psi(\cdot - k)|\|_\infty < \infty.$
\end{lemma}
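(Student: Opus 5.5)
The plan is to bound $f$ and the predictors separately, in both cases through the wavelet expansion \eqref{eq:wavelet_decomposition} and the localization constant $M_\varphi$. The one fact used in both parts is a level-wise sup bound for $L^2$-normalized wavelets $\varphi_{j,k} = 2^{jd/2}\varphi(2^j\cdot - k)$. For any coefficient vector $(c_{j,k})_k$ at a fixed level $j$, I will show
\[
\Big\|\sum_{k} c_{j,k}\varphi_{j,k}\Big\|_\infty \le 2^{jd/2}\,\max_k |c_{j,k}|\,\Big\|\sum_k |\varphi(2^j\cdot - k)|\Big\|_\infty \le 2^{jd/2} M_\varphi \max_k|c_{j,k}|.
\]
This uses only the triangle inequality and the fact that $\|\sum_k|\varphi(2^j\cdot-k)|\|_\infty = \|\sum_k|\varphi(\cdot-k)|\|_\infty$, by the change of variables $x\mapsto 2^jx$. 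The scaling level $j_0$ is handled the same way with $\phi$ in place of $\psi$.

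\textbf{Predictors.} For $\hat f=\sum_{j,k}c_{j,k}\varphi_{j,k}$ with $|c_{j,k}|\le C_{j,k}=B2^{-jd/2}$, each level contributes at most $2^{jd/2}M_\varphi B2^{-jd/2}=BM_\varphi$. The expert predictors of Algorithm~\ref{alg:online_adaptive_wavelet} are truncated: they use one scaling level plus detail levels $j_0,\dots,j_0+J$. Summing over these gives at most $(J+1)$ (or $J+2$) level contributions, so $\|\hat f\|_\infty\le (J+1)BM_\varphi$ up to an absorbed constant. I would state the bound for truncated expansions explicitly, since there is no decay in $j$ and the constant genuinely grows with the number of levels. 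The scaling coefficients lying on the grid $\mathcal A_{j_0}$ need a separate remark: their magnitude is at most $2^{-j_0d/2}\|\phi\|_1B_\infty$, and I would either check that this is $\le C_{j_0,k}$ or note that it yields the same per-level bound with $\|\phi\|_1B_\infty$ in place of $B$.

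\textbf{Target function.} For $f\in\B^s_{pq}(B)$, the definition \eqref{eq:besov_norm} gives, since $\|\cdot\|_\infty\le\|\cdot\|_p$ on sequences and each $q$-summand is bounded by the whole $\ell^q$ norm,
\[
\max_k|\beta_{j,k}|\le\|\betabold_j\|_p\le B\,2^{-j(s-\frac dp+\frac d2)},
\qquad
\max_k|\alpha_{j_0,k}|\le B.
\]
Plugging this into the level-wise estimate, level $j$ contributes at most $BM_\varphi 2^{-j(s-d/p)}$. Summing the geometric series over $j\ge j_0\ge 0$ then gives $\|f\|_\infty\le BM_\varphi\,(1-2^{-(s-d/p)})^{-1}$. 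The scaling part adds a term of order $BM_\varphi$, which can be absorbed into the same form by indexing it as level $j_0-1$ or adjusting constants. Since $s>d/p$, the series converges uniformly, so the expansion converges uniformly and the pointwise bound is legitimate (the $S$-regularity remark in the paper covers this).

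\textbf{Main obstacle.} The only delicate points are bookkeeping: making the constants precise, namely how the scaling level is counted (which may shift the bound by a factor $2$ or by one level), and justifying that the sup bound passes to the infinite sum. The second point is handled by uniform convergence of the geometric majorant.
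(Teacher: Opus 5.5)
Your proposal is correct and is essentially the paper's argument. It uses the level-wise bound $\|\sum_k c_{j,k}\varphi_{j,k}\|_\infty \le 2^{jd/2}M_\varphi\max_k|c_{j,k}|$, combined with $\|\betabold_j\|_\infty\le\|\betabold_j\|_p\le B2^{-j(s-\frac dp+\frac d2)}$ and a geometric series for $f$, and with $|c_{j,k}|\le B2^{-jd/2}$ giving $BM_\varphi$ per level (hence $(J+1)BM_\varphi$) for $\hat f$. Your extra bookkeeping (level count, uniform convergence for the infinite expansion of $f$) is more careful than the paper's proof, which truncates the sum for $f$ at $j_0+J$ and silently folds the scaling level into the geometric series; just note that the scaling part is bounded by $2^{j_0d/2}BM_\phi$, so it is of order $BM_\varphi$ only when you expand $f$ from $j_0=0$, which is the natural choice here.
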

\begin{proof}
Since $f \in \B^s_{pq}(B)$ one has for every $j, \c_j = (c_{j,k})_{k \in \Lambda_j}$ 
\[
\|\c_j\|_\infty \le \|\c_j\|_p \le B2^{-j(s-\frac{d}{p} + \frac{d}{2})}.
\]
This gives
\[
\|f\|_\infty \leq \sum_{j=j_0}^{j_0+J} \|\c_j\|_\infty 2^{-j(s-\frac{d}{p})}\Big\|\sum_k |\varphi(\cdot - k)|\Big\|_\infty \leq BM_\varphi \sum_{j=j_0}^{j_0+J} 2^{-j(s - \frac{d}{p})} \leq \frac{BM_\varphi}{1 - 2^{-(s- \frac{d}{p})}}
\]
and 
\[
\|\hat f\|_\infty \le \sum_{j = j_0}^{j_0+J} |c_{j,k}|2^{j\frac{d}{2}}\Big\|\sum_k |\varphi(\cdot - k)|\Big\|_\infty \leq BM_\varphi \sum_{j=j_0}^{j_0+J} 1 \le BM_\varphi(J+1).
\]
\end{proof}

\newpage
\section{Review of multi-resolution analysis}

\label{appendix:wavelet}

In this section we present some of the basic ingredients of wavelet theory.
Let's assume we have a multivariate function $f : \R^d \to \R$.

\begin{definition}[Scaling function]
We say that a function $\phi \in L^2(\R^d)$ is the scaling function of a multiresolution analysis (MRA) if it satisfies the following conditions:
\begin{enumerate}
    \item the family \[\textstyle \{x \mapsto \phi(x - k)= \prod_{i=1}^d \phi(x_i - n_i) : k \in \mathbb Z^d\}\]
    is an ortho-normal basis, that is $\langle \phi(\cdot - k),\phi(\cdot - n) \rangle = \delta_{k,n}$;
    \item the linear spaces \[\textstyle V_0 = \big\{f = \sum_{k \in \mathbb Z^d} c_k \phi(\cdot - k), (c_k) : \sum_{k \in \mathbb Z^d} c_k^2 < \infty\big\},\dots, V_j = \{h = f(2^j \cdot) : f \in V_0\}, \dots,\]
    are nested, i.e. $V_{j-1} \subset V_j$ for all $j \geq 0$.
\end{enumerate}
\end{definition}

We note that under these two conditions, it is immediate that the functions \[\{\phi_{j,k} = 2^{dj/2}\phi(2^j \cdot-k)\, , k \in \mathbb Z^d\}\] form an ortho-normal basis of the space $V_j, j \in \mathbb N$.
One can define the projection kernel of $f$ over $V_j$ (from here we also say kernel projection at scale or level $j$) as \begin{equation}
\label{eq:proj_kernel}
    K_j f(x) := \sum_{k \in \mathbb Z^d} \langle f , \phi_{j,k} \rangle \phi_{j,k}(x) = \int_{\R^d} K_j(x,y) f(y) \, dy\,,
\end{equation}
with $K_j(x,y) = \sum_{n \in \mathbb Z^d}  \phi_{j,k}(x)\phi_{j,k}(y) = \sum_{k\in \mathbb Z^d} 2^{dj}\phi(2^jx-n)\phi(2^jy-n)$ (which is not of convolution type) but has comparable approximation properties that we detail after. 

\paragraph{Incremental construction via wavelets.}
Since the spaces $(V_j)$ are nested, one can define nontrivial subspaces as the orthogonal complements $W_j := V_{j+1} \ominus V_j$. We can then telescope these orthogonal complements to see that each space $V_j, j \geq j_0$ can be written as
\[
V_j = V_{j_0} \oplus \bigg(\bigoplus_{l=j_0}^{j} W_l\bigg) \quad \text{for any } j_0 \in \mathbb{N}.
\]
Let $\psi$ be a mother wavelet corresponding to the scaling function $\phi$. The associated wavelets are defined as follows: for $E = \{0,1\}^d \setminus \{0\}$, we set
\[
\psi^\epsilon(x) = \psi^{\epsilon_1}(x_1) \cdots \psi^{\epsilon_d}(x_d), \quad 
\psi^\epsilon_{j,n} = 2^{jd/2} \psi^\epsilon(2^j x - n), \quad 
j \geq 0, \quad n \in \mathbb{Z}^d,
\]
where $\psi^0 = \phi$, $\psi^1 = \psi$. For each $j$, these functions form an orthonormal basis of $W_j$.

Analogously, one can now observe that for every $j \geq j_0$,
\begin{equation} 
\label{eq:telescopage}
K_j f = K_{j_0} f + \sum_{l=j_0}^{j-1} \left(K_{l+1} f - K_l f\right),
\end{equation}
where each increment in the sum can be written as
\[
K_{j+1} f - K_j f = \sum_k \sum_\epsilon \langle f, \psi^\epsilon_{j,k} \rangle \psi^\epsilon_{j,k},
\]
where for each $j \geq 1$, the set
\[
\left\{ \psi^e_{j,k} = 2^{dj/2} \psi^\epsilon(2^j x - k) : \epsilon \in E,\, k \in \mathbb{Z}^d \right\}
\]
forms a basis of $W_j$ for some wavelet $\psi$, with $E := \{0,1\}^d \setminus \{0\}$. For simplicity, we include the index $\epsilon$ in the multi-index $k$. Finally, the set $\{\phi_{j_0,k}, \psi_{j,k}\}$ constitutes a \emph{wavelet basis}.

For our results we will not be needing a particular wavelet basis, but any that satisfies the following key properties.

\begin{definition}[$S$-regular wavelet basis]
\label{def:regular_wavelet}
Let $S \in \mathbb{N}^*$ and $j_0 = 0$. The multiresolution wavelet basis \[\{\phi_{k}=\phi(\cdot - k), \psi_{j,k} = 2^{jd/2}\psi(2^d\cdot - k)\}\] of $L^2(\R^d)$ with associated projection kernel $K(x,y) = \sum_k \phi_k(x)\phi_k(y)$ is said to be $S$-regular if the following conditions are satisfied:
\begin{enumerate}[label=(D.\arabic*),ref=D.\arabic*,noitemsep,topsep=-\parskip]
    \item \label{def:s_regular_wavelet:D1} \textbf{Vanishing moments and normalization:}
    \[
    \textstyle \int_{\mathbb{R}^d} \psi(x)\, x^\alpha\, dx = 0 \quad \text{for all multi-indices } \alpha \text{ with } |\alpha| < S,
    \qquad \int_{\mathbb{R}^d} \phi(x)\, dx = 1.
    \]
    Moreover, for all $v \in \mathbb{R}^d$ and $\alpha$ with $1 \le |\alpha| < S$,
    \[
    \textstyle \int_{\mathbb{R}^d} K(v, v + u)\, du = 1, \quad
    \int_{\mathbb{R}^d} K(v, v + u)\, u^\alpha\, du = 0.
    \]

    \item \label{def:s_regular_wavelet:D2} \textbf{Bounded basis sums:}
    \[
    \textstyle M_\phi := \sup_{x \in \mathbb{R}^d} \sum_{k} |\phi(x - k)| < \infty, \qquad
    M_\psi := \sup_{x \in \mathbb{R}^d} \sum_{k} |\psi(x - k)| < \infty .
    \]

    \item \label{def:s_regular_wavelet:D3} \textbf{Kernel decay:} For $\kappa(x,y)$ equal to $K(x,y)$ or $\sum_{k} \psi(x - k) \psi(y - k)$, there exist constants $c_1, c_2 > 0$ and a bounded integrable function $\phi : [0,\infty) \to \mathbb{R}$ such that
    \[
    \textstyle \sup_{v \in \mathbb{R}^d} |\kappa(v, v - u)| \le c_1 \phi(c_2 \|u\|), \qquad C_S := \int_{\mathbb{R}^d} \|u\|^S \phi(\|u\|)\, du < \infty.
    \]
\end{enumerate}
\end{definition}

% \begin{definition}[$S$-regular wavelet]
% \label{def:regular-wavelet}
% A multiresolution wavelet basis \[\{ \Phi_k, \Psi^\epsilon_{j,k} : k \in \mathbb Z^d, j \in \mathbb N, \epsilon \in \{0,1\}^d \setminus \{0\} \}\] with projection kernel $K(x,y) = \sum_k \Phi(x-k)\Phi(y-k)$ is said to be $S$-regular for some $S \in \mathbb N^*$ if the following conditions are satisfied: \begin{enumerate}
%     \item $\int_{\R^d} \Psi(u) u^l du = 0, \forall l = 0, 1, \dots, S-1, \quad \int_{\R^d} \Phi(x) dx = 1$, and for all $v \in \R^d$, and multi-index $s \in \mathbb N^d$ such that $|s| = 1, \dots, S-1$,
%     \[
%     \int_{\R^d} K(v,v+u)\, du = 1 \quad \text{and} \quad \int_{\R^d} K(v,v+u) u^s \, du= 0\, ;
%     \]
%     \item $\sum_k |\Phi(\cdot - k)| \in L^\infty(\R^d)$ and $\sum_k |\Psi(\cdot - k)| \in L^\infty(\R^d)$;
%     \item $\sup_{v \in \R^d} |K(v,v-u)| \leq c_1g(c_2\|u\|)$ for $c_1,c_2 >0$ and every $u \in \R^d$, for some bounded integrable function $g$ such that $\int \|u\|^S g(\|u\|) du < \infty$.
% \end{enumerate}
% \end{definition}

\paragraph{Case of a bounded compact $\X \subset \R^d$.}
The above definition applies to wavelet systems on $\R^d$, but can be extended to compact domains $\X \subset \mathbb{R}^d$ using standard boundary-corrected or periodized constructions. Notable examples include the compactly supported orthonormal wavelets of \citet[Chapter~7]{daubechies1992ten} and the biorthogonal, symmetric, and highly regular wavelet bases of \citet{cohen1992biorthogonal}.
At the $j$-th level, there are now $O(2^{jd})$ wavelets $\psi_{j,k}$, which we index by $k \in \Lambda_j$, the set of indices corresponding to wavelets at level $j$. This coincides with the expansion used in Equation~\eqref{eq:wavelet_decomposition}.